\definecolor{blue}{HTML}{1F77B4}
\definecolor{orange}{HTML}{FF7F0E}
\definecolor{green}{HTML}{2CA02C}
\pgfplotsset{compat=1.14}
\newtheorem{tm}{Theorem}[section]
\newtheorem{prop}[tm]{Proposition}
\newtheorem{lem}[tm]{Lemma}
\newtheorem{rk}[tm]{Remark}
\numberwithin{equation}{section}
\numberwithin{tm}{section}
\newtheorem{Sim}[subsubsection]{Simulation}
\title{Competition-exclusion and coexistence  in a two-strain SIS epidemic model in patchy environments
}
\author{Jonas T. Doumat\`e\footnote{doumatt@yahoo.fr / jonas.doumate@fast.uac.bj; D\'epartement de Math\'ematiques (FAST), Institut de Math\'ematiques et de Sciences Physiques, Universit\'e d'Abomey-Calavi, Abomey-Calavi, Benin (Corresponding author)},\quad Tahir B. Issa\footnote{tahirbachar.issa@sjsu.edu; Department of Mathematics and Statistics, San Jose State University, San Jose, California, USA}, \quad   and \quad  Rachidi B. Salako\footnote{rachidi.salako@unlv.edu; Department of Mathematical Sciences, University of Nevada Las Vegas, Las Vegas, USA} }
\date{}
\begin{document}

\maketitle

\begin{abstract} This work examines the dynamics of solutions of a two-strain SIS epidemic model in patchy environments. The basic reproduction number $\mathcal{R}_0$ is introduced, and sufficient conditions are provided to guarantee the global stability of the disease-free equilibrium (DFE). In particular,  the DFE is globally stable when either: (i) $\mathcal{R}_0\le \frac{1}{k}$, where $k\ge 2$ is the total number of patches, or (ii) $\mathcal{R}_0<1$ and the dispersal rate of the susceptible population is large. Moreover,  the questions of competition-exclusion and coexistence of the strains are investigated when the single-strain reproduction numbers are greater than one. In this direction, under some appropriate hypotheses, it is shown that the strain whose   basic reproduction number and local reproduction function are the largest  always drives the other strain to extinction in the long run. Furthermore, the asymptotic dynamics of the solutions are presented when either both strain's local reproduction functions are spatially homogeneous or the population dispersal rate is uniform. In the latter case, the invasion numbers are introduced and the existence of coexistence endemic equilibrium (EE) is proved when these invasion numbers are greater than one. Numerical simulations are provided to complement the theoretical results.
    
\end{abstract}

\noindent{\bf Keywords}: Patch model; Epidemic model;
Asymptotic Behavior; Competition-Exclusion; Coexistence.

\smallskip

{
\noindent{\bf 2020 Mathematics Subject Classification}: 34D05, 34D23, 91D25, 92D30, 37N25}

\section{Introduction} 

\quad The novel human coronavirus disease 2019 (COVID-19) was first reported in the last trimester of 2019, and quickly spread around the world. The   emergence of different strains of the  disease  generated significant concerns due to  attendant waves after waves of infected populations across the world. As of March, 2020, the World Health Organization’s International Health Regulation Emergency Committee has declared  the COVID-19 outbreak a Public Health Emergency of International Concern.  In fact, in general,  the questions of developing and implementing  effective and adequate control strategies to alleviate the effects of infectious diseases, such as the COVID-19, on populations remain major concerns for public and health officials. These challenges are  related in part to the lack of resources and  good understanding of the dynamics of the disease.  They are also significantly influenced by  the unprecedented increase in  human migration rates, which has made the word more interconnected. Hence, the studies of mathematical models, which incorporate population movements and environmental heterogeneity, may help public and health officials to  take  informed decisions and implement safe disease control strategies. In the current work, we study the large time behavior of solutions of a two-strain susceptible-infected-susceptible (SIS) epidemic model in patchy environments and investigate how the parameters of the model affect its dynamics. In particular, our results reveal the important of both spatial heterogeneity and population movements on the dynamics of infectious diseases.

Consider the Susceptible-Infected-Susceptible (SIS) two-strain  epidemic model
\begin{equation}\label{model-eq1}
\begin{cases}
\frac{dS_j}{dt}=d_S\sum_{i\in\Omega}(L_{j,i}S_i-L_{i,j}S_j)-(\beta_{1,j}I_{1,j}+\beta_{2,j}I_{2,j})S_j+(\gamma_{1,j}I_{1,j}+\gamma_{2,j}I_{2,j}),  & j\in\Omega,\ t>0,\cr 
\frac{dI_{1,j}}{dt}=d_1\sum_{i\in\Omega}(L_{j,i}I_{1,i}-L_{i,j}I_{1,j})+\beta_{1,j}I_{1,j}S_{j}-\gamma_{1,j}I_{1,j}, & j\in\Omega,\ t>0,\cr  
\frac{dI_{2,j}}{dt}=d_2\sum_{i\in\Omega}(L_{j,i}I_{2,i}-L_{i,j}I_{2,j})+\beta_{2,j}I_{2,j}S_{j}-\gamma_{2,j}I_{2,j}, & j\in\Omega,\ t>0,\cr 
N=\sum_{j\in\Omega}(S_j+I_{1,j}+I_{2,j}),
\end{cases}
\end{equation}
where $N$ is the total number of individuals, $k\ge 2$ is the number of patches, and $\Omega=\{1,2,\cdots,k\}$.   For $j\in\Omega$ and $l\in\{1,2\}$:  $S_j$ is the total size of susceptible population on patch-$j$; $I_{l,j}$ is the total number of infected population with strain-$l$ on patch-$j$;  $d_S$ and $d_l$ are positive numbers and stand respectively for the dispersal rates of susceptible population and infected population with strain-$l$; $\beta_{l,j}$  is positive and represents the disease transmission rate resulting from interaction of the susceptible and infected population with strain-$l$ on patch $j$; $\gamma_{l,j}$ is positive and represents the recovery rate from strain-$l$ on patch-$j$.   Note that the $L_{j,j}$'s terms do not appear in \eqref{model-eq1}, so  we set $L_{j,j}=0$ for convenience. For $i, j\in\Omega$, $L_{i,j}$ is a nonnegative number and represents migration rate from  the patch-$j$ to the patch-$i$. Throughout this work, we shall suppose that the following standing assumption holds.

\medskip

\noindent{\bf (A1)} The matrix $L=(L_{i,j})_{i,j=1}^k$ is nonnegative, symmetric and irreducible.

\medskip

\noindent Assumption {\bf (A1)} indicates that the migration rates between any two patches are the same and individuals can move directly or indirectly from any patch to another.

When the dispersal rates are neglected in \eqref{model-eq1}, we obtain a particular type of a two-strain ODE-SIS  epidemic model studied by Bremermann and Thieme in \cite{BT1989}. An important conclusion reached in \cite{BT1989} for \eqref{model-eq1} when $d_S=d_1=d_2=0$ is that the pathogen strain that does not optimize the {\it basic reproduction} number dies out asymptotically.  This fact is known as the {\it competition-exclusion principle} in the literature.  The basic reproduction number measures the expected number of secondary cases caused by a single index in an otherwise susceptible population. Since this interesting work, several studies have been devoted to investigate the competition-exclusion and coexistence of different strains of an infectious disease.

In \cite{AA2003}, Ackleh and Allen examined the competition-exclusion and coexistence for pathogens in an SIR epidemic model with variable population size. In particular, under appropriate hypotheses on the parameters, they showed that several pathogens for a single host leads to exclusion of all pathogens except the one with the largest basic reproduction number. In \cite{AA2005}, Acklesh and Allen extended their study to  SIR and SIS epidemic models with multiple pathogen strains which assume total cross immunity, standard incidence, and density-dependent host mortality. Again, under some conditions on the parameters of these models, they established the competition-exclusion between multiple strains of the disease. For some related studies on multi-strain ODE models, we refer the interested readers to \cite{Allen2003, IML2005, BIS2014, MA2007, MCTM2018} and the references cited therein.

 Assume that $I_2=0$. Then system \eqref{model-eq1} reduces to a single-strain model, which  was recently studied by Li and Peng \cite{LP2023}. In this setting, they introduced the single-strain basic reproduction number $\mathcal{R}_{0,1}(N)$ (see formula \eqref{R_0-l} below) and  investigated  the  large-time behavior of solutions. Among other results, they established the global stability of the disease free equilibrium (DFE) if $\mathcal{R}_{0,1}(N)\le 1$ and either the local basic reproduction function $\mathfrak{R}_{1}(N)$ (see  formula \eqref{local-R_0} below) is constant or the population dispersal rate is uniform. However, when $\mathcal{R}_{0,1}(N)>1$, they proved that the disease is endemic and the single-strain model has at least one endemic equilibrium (EE). Furthermore, the work \cite{LP2023} established  the uniqueness of the single-strain EE  under some additional assumptions, which include:  (i)  the dispersal rate of the susceptible group is greater or equal to that of the infected group, (ii) transmission rate is patch independent,  and (iii) $\mathfrak{R}_{1}(N)$ is constant. The asymptotic profiles of the single-strain EE as either the dispersal rate of the susceptible or infected groups approximate zero were also studied in \cite{LP2023}. The results of \cite{LP2023} extend some known results (\cite{CS2022, DW2016, WJL2018, WZ2016}) on the continuous diffusive models to the patch model.

 Indeed, when dispersal movement of the population is assumed to occur locally and randomly  in  adjacent directions, the following PDE-SIS model 

 \begin{equation}\label{PDE-model}
     \begin{cases}
         \partial_tS=d_S\Delta S-(\beta_1I_1+\beta_2I_2)S+(\gamma_1I_1+\gamma_2I_2), & x\in\Omega,\ t>0,\cr 
         \partial_tI_1=d_1\Delta I_1 +\beta_1I_1S-\gamma_1I_1  & x\in\Omega,\ t>0,\cr 
         \partial_tI_2=d_2\Delta I_2 +\beta_2I_2S-\gamma_2I_2  & x\in\Omega,\ t>0,\cr 
         0=\partial_{\vec{n}}S=\partial_{\vec{n}}I_1=\partial_{\vec{n}}I_2 & x\in\partial\Omega,\ t>0,\cr 
         N=\int_{\Omega}(S+I_1+I_2),
     \end{cases}
 \end{equation}
 can be used to study the dynamics of the disease. In \eqref{PDE-model}, $\Omega$ is an open bounded domain in $\mathbb{R}^n$, $n\ge 1$, with a smooth boundary $\partial\Omega$. $\vec{n}$ denotes the  outward normal unit derivative on $\partial\Omega$. The parameters of \eqref{PDE-model} have the same meanings as those in the multiple patches model \eqref{model-eq1}. Hence, the ODE-SIS system \eqref{model-eq1} can be seen as a discrete in space of the continuous in space  PDE-SIS model \eqref{PDE-model}. The two-strain PDE-SIS model \eqref{PDE-model} was first studied by Ackleh, Deng and Wu \cite{ADW2016}, then by Salako \cite{SalakoJMB2023}, and recently by Castellano and Salako \cite{CS2023}. In these works, the authors defined the basic reproduction number of \eqref{PDE-model} and studied the asymptotic profiles of coexistence steady states, and  the large-time behavior of classical solutions. In particular, the authors of \cite{ADW2016} established the competition-exclusion of the strains when the local reproduction functions are spatially homogeneous. In \cite{SalakoJMB2023}, sufficient criteria for existence and non-existence of coexistence EE equlibria of \eqref{PDE-model} are obtained. Moreover,  the asymptotic profiles of coexistence EE solutions of \eqref{PDE-model} as the diffusion rates of some of the subgroups of the population converge to zero are established in \cite{SalakoJMB2023}.  The work \cite{CS2023} considered a more general model and also established the competition-exclusion of the strains if at least one strain local reproduction function is spatially homogeneous. Our results in the current manuscript examine the extent to which the findings of \cite{ADW2016} and \cite{CS2023} on the PDE-SIS model hold for the two-strain multiple patches model \eqref{model-eq1}. Furthermore, we  establish some new results for the two-strain multiple patches model \eqref{model-eq1}, which remain open for the PDE-SIS model \ref{PDE-model}. In particular, for system \eqref{model-eq1}, Theorem \ref{T1} below establishes the uniform persistence of the susceptible population. It also establishes the global stability of the DFE when either  the basic reproduction number is: (i) less or equal to the reciprocal of the total number of patches,  or (ii)  less than one and the dispersal rate of the susceptible population is sufficiently large.

 The current work also examines the global dynamics of solutions of the two-strain SIS multiple patches system \eqref{model-eq1} and discusses the extent to which the competition-exclusion principle hold. In particular Theorems \ref{T2}, \ref{T3}, and \ref{T4} establish the competition exclusion of the strains under quite general assumption on the parameters of the model. Theorem \ref{T5} introduces the invasion numbers and establishes the coexistence of the strains when these numbers are greater than one. 

There are  several  studies on the single-strain PDE-SIS epidemic model of system \eqref{PDE-model} (see \cite{CS2022, DW2016, WJL2018, WZ2016} and the references therein). Note that the force of infection used in the mathematical models \eqref{model-eq1} and \eqref{PDE-model} is $\beta_iI_i$, and  refers as the mass-action or density-dependent transmission mechanism.  Another popular transmission mechanism used in the modeling of infectious diseases is the standard or frequency-dependent infection mechanism, in which case the force of infection is given by $\frac{\beta_iI_i}{S+I_1+I_2}$.  For some results on the single-strain epidemic models with the standard transmission mechanism, we refer to \cite{Allen2007, Allen2008, CLL2017, GD2019, G2020, GL2021, LSP2023_JMB, P2009, Peng_Yi2013, SLX2018, WLR2019} and the references cited therein. See also \cite{LS2021, LS2023, WTM2017} for some recent progress on the multi-strain diffusive epidemic model with standard transmission mechanism.  

The rest of the paper is organized as follows. In section \ref{Sec2}, we first introduce some notations and definitions, and then state our main results. We complete this section with some numerical simulations and some discussion of our main results. Section \ref{Sec3} introduces some preliminary results, essential for the clarity of our presentations. The proofs of our main results are given in section \ref{Sec4}.

%

\section{Notations, Definitions and Main Results}\label{Sec2}

\subsection{Notations and Definitions} For convenience, we introduce a few notations and definitions. First, we would like to rewrite system \eqref{model-eq1} is a compact form. To this end, let $\mathcal{L}=(\mathcal{L}_{i,j})_{i,j=1}^{k}$ denote the square matrix with entries 
\begin{equation}
    \mathcal{L}_{i,j}=\begin{cases}-\sum_{l\in\Omega}L_{l,j} & i=j\in\Omega\cr 
    L_{i,j} & i\ne j\in\Omega.
    \end{cases}
\end{equation}
It follows from assumption {\bf (A1)} that the matrix $\mathcal{L}$ is  symmetric, irreducible and has nonnegative off diagonal  entries. For each $X\in\{S,I_{l},\beta_l,\gamma_{l}\}$, let $X$ denote the column vector in $\mathbb{R}^k$,  $X=(X_1,\cdots,X_k)^T$.
Given two column vectors $X$ and $Y$ in $\mathbb{R}^k$, define the Hadamard product $X\circ Y$,  $X\circ Y=(X_1Y_1,\cdots,X_kY_k)^T$, 

and  denote by ${\rm diag}(X)$ the diagonal matrix with diagonal entries $[{\rm diag}(X)]_{ii}=X_i$, $i=1,\cdots,k$.
Using the above notations, system \eqref{model-eq1} can be written as 

\begin{equation}\label{model-eq2}
    \begin{cases}
        \frac{d}{dt}S=d_{S}\mathcal{L}S +\sum_{l=1}^2(\gamma_l-\beta_l\circ S)\circ I_l &t>0,\cr 
        \frac{d}{dt}I_l=d_l\mathcal{L}I_l +(\beta_l\circ S-\gamma_l)\circ I_{l} & t>0, \ l=1,2,\cr 
        N=\sum_{j\in\Omega}(S_j+\sum_{l=1}^2I_{l,j})>0.
    \end{cases}
\end{equation}

Due to biological interpretations of the vectors $S$ and $I_{l}$, we will only be interested in nonnegative solutions of \eqref{model-eq2}. Let $\mathbb{R}_+$ denote the set of nonnegative real numbers.   Note that the right-hand side of \eqref{model-eq2} is locally Lipschitz on $\left[\mathbb{R}^k\right]^3$. Hence, given an initial data $(S(0),I_1(0),I_2(0))\in \left[\mathbb{R}_+^{k}\right]^3$,  \eqref{model-eq2} has a unique solution $(S(t),I_1(t),I_2(t))$ defined on a maximal interval of existence $(0,T_{\max})$. Observe that  $\mathcal{L}$ generates a strongly-positive matrix-semigroup $\{e^{t\mathcal{L}}\}_{t\ge 0}$ on $\mathbb{R}^k$. We have endowed $\mathbb{R}^k$ with the usual order induced by the cone of vectors with nonnegative entries $\mathbb{R}_+^{k}$. Hence, it follows from the comparison principle for cooperative systems that $(S(t),I_{1}(t),I_2(t))\in \left[\mathbb{R}^k_{+}\right]^3$ for every $t\in[0,T_{\max})$. A direct computation based on \eqref{model-eq1} gives
$$
\frac{d}{dt}\sum_{j\in\Omega}(S_{j}(t)+\sum_{l=1}^2I_{l,j}(t))=0\quad \forall\ t\in[0,T_{\max}).
$$
Hence, 
\begin{equation}\label{Eq1:1}
 \sum_{j\in\Omega}(S_{j}(t)+\sum_{l=1}^2I_{l,j}(t))=N\quad \forall\ t\in[0,T_{\max}),   
\end{equation}
from which we deduce that $T_{\max}=\infty$. Note also from \eqref{Eq1:1} that solution operator of \eqref{model-eq2} is globally bounded. Thanks to \eqref{Eq1:1} and the fact that the positive constant $N$ is fixed throughout the whole manuscript, the semiflow generated by  solutions of \eqref{model-eq1} leaves invariant the set 
$$
\mathcal{E}:=\Big\{(S,I_1,I_2)\in [\mathbb{R}^k_+]^3\ : \ \sum_{j\in\Omega}(S_j+I_{1,j}+I_{2,j})=N\Big\}.
$$    It is easy to see that $\mathcal{E}$ is a compact subset of $[\mathbb{R}^k_+]^3$, being closed and bounded. Note from \eqref{model-eq2} that if for some $l=1,2$, $I_{l,j}(0)=0$ for every $j\in\Omega$, then $I_{l,j}(t)=0$ for all $t>0$ and $j\in\Omega$. However, since $\mathcal{L}$ is irreducible with nonnegative off diagonal entries, if $I_{l,j_0}(0)>0$ for some $l=1,2$ and $j_0\in\Omega$, then  $I_{l,j}(t)>0$ for all $t>0$ and $j\in\Omega$. We say that a solution  $(S(t),I_1(t),I_2(t))$ has positive initial data if for each $l=1,2$,  $I_{l,j}(0)>0$ for some  $j\in\Omega$.

A vector $(S,I_{1},I_2)\in \left[\mathbb{R}_+^k\right]^3$ is an equilibrium solution of \eqref{model-eq2} if it satisfies the system of algebraic equation 

\begin{equation}\label{model-eq3}
    \begin{cases}
       0=d_{S}\mathcal{L}S +\sum_{l=1}^2(\gamma_l-\beta_l\circ S)\circ I_l, \cr 
        0=d_l\mathcal{L}I_l +(\beta_l\circ S-\gamma_l){\circ} I_{l} &  \ l=1,2,\cr 
        N=\sum_{j\in\Omega}(S_j+\sum_{l=1}^2I_{l,j}).
    \end{cases}
\end{equation}

Given a $k\times k$ square matrix $M$, we denote by $\lambda_{*}(M)$ its spectral bounds,
\begin{equation}
    \lambda_{*}(M):=\sup\{\mathfrak{R}e(\lambda)\ :\ \lambda\in\sigma(M)\},
\end{equation}
where $\mathfrak{R}e(\lambda)$ is the real part of $\lambda\in \mathbb{C}$, 
and by $\rho(M)$ its spectral radius,
$$
\rho(M):=\max\{|\lambda|\ :\ \lambda\in\sigma(M)\},
$$
where $\sigma(M)$ is the spectrum of $M$.
 
A disease free equilibrium (DFE) is an equilibrium of the form $(S,0,0)$.  Since the matrix $\mathcal{L}$ generates a strongly-positive semigroup and $\sum_{i\in\Omega}\mathcal{L}_{j,i}=0$ for each $j\in\Omega$, then by the Perron-Frobenius theorem, $\lambda_*(\mathcal{L})=0$. Moreover, $\lambda_*(\mathcal{L})$ is simple. For convenience, let ${\bf 0}$ denote the null vector in $\mathbb{R}^k$  and ${\bf 1}$ denote the $k$-column vector with entries ones, that is ${\bf 1}:=(1,\cdots,1)^T.$
 
 It is easy to see that column vector $\frac{N}{k}{\bf 1}$ is an eigenvector associated with $\lambda_*(\mathcal{L})$. As a result, we obtain that ${\bf E}^0:=(\frac{N}{k}{\bf 1},{\bf 0},{\bf 0})$ is the unique DFE of \eqref{model-eq2}. An equilibrium solution $(S,I_1,I_2)$ of \eqref{model-eq2} for which $I_{l,j}>0$ for every $j\in\Omega$ for some $l\in\{1,2\}$ is called an endemic equilibrium (EE) solution of \eqref{model-eq2}. An EE solution of the form $(S,I_1,0)$ (resp. $(S,0,I_2)$ ) is called a strain-1 (resp. strain-2) EE solution. An EE solution $(S,I_{1},I_2)$ for which $I_{l,j}>0$ for each $j\in\Omega$ and $l\in\{1,2\}$ is called a coexistence-EE solution.

 Linearizing \eqref{model-eq2} at ${\bf E}^0$, we get \begin{equation}\label{model-eq4}
    \begin{cases}
        \frac{d}{dt}P=d_{S}\mathcal{L}P +\sum_{l=1}^2(\gamma_l-\frac{N}{k}\beta_l)\circ Q_l &t>0,\cr 
        \frac{d}{dt}Q_l=d_l\mathcal{L}Q_l +(\frac{N}{k}\beta_l-\gamma_l)\circ Q_{l} & t>0, \ l=1,2,\cr 
        0=\sum_{j\in\Omega}(P_j+\sum_{l=1}^2Q_{l,j}).
    \end{cases}
\end{equation}
Note that for each $l=1,2$, the second equation in \eqref{model-eq4} decouples from the first equation.
 Clearly, for each $l\in\{1,2\}$, the square matrix $\mathcal{V}_l$ defined by \begin{equation}\label{V-l}
 \mathcal{V}_l:={\rm diag}(\gamma_l)-d_l\mathcal{L}
 \end{equation} is invertible. Thanks to \eqref{model-eq4}, for each $l\in\{1,2\}$,  setting 
 \begin{equation}\label{F-l}
 \mathcal{F}_l={\rm diag}(\beta_l),
 \end{equation} 
 and following the next generation matrix theory, strain-$l$'s basic reproduction $\mathcal{R}_{0,l}(N)$ is given by
 \begin{equation}\label{R_0-l}
     \mathcal{R}_{0,l}(N)=\frac{N}{k}\rho(\mathcal{F}_{l}\mathcal{V}_l^{-1}).
 \end{equation}
For each $l\in\{1,2\}$, it is well known that $\mathcal{R}_{0,l}(N)-1$ and $\lambda_*(\frac{N}{k}\mathcal{F}_l-\mathcal{V}_l)$ have the same sign (see \cite{Allen2007}). The basic reproduction $\mathcal{R}_0(N)$ of the two-strain model \eqref{model-eq2} is 
 \begin{equation}
     \mathcal{R}_0(N)=\max\Big\{\mathcal{R}_{0,l}(N)\ :\ l=1,2\Big\}=\frac{N}{k}\max\Big\{ \rho\big(\mathcal{F}_l\mathcal{V}_l^{-1}\big)\ :\ l=1,2\Big\}.
 \end{equation}
 For $l\in\{1,2\}$, it follows from \cite{LP2023} that \eqref{model-eq1} has a single-strain-$l$ EE solution if $\mathcal{R}_{0,l}>1$. The work \cite{LP2023} also provided sufficient conditions for the uniqueness and stability of single-strain EE of \eqref{model-eq1}. 

Given, $j\in\Omega$ and $l\in\{1,2\}$, the local basic reproduction on patch-$j$ of strain-$l$ is 
\begin{equation}\label{local-R_0}
    \mathfrak{R}_{l,j}(N)=\frac{N}{k}\mathfrak{R}_{l,j}\quad \text{where}\quad \mathfrak{R}_{l,j}
    :=\frac{\beta_{l,j}}{\gamma_{l,j}},
\end{equation}
and the two-strain local reproduction number is 
\begin{equation}
    \mathfrak{R}_j(N)=\max_{l=1,2}\mathfrak{R}_{l,j}(N).
\end{equation}
Note that $\beta_{l}=\mathfrak{R}_{l}\circ\gamma_{l}$ for each $l=1,2$, where $\mathfrak{R}_l=(\mathfrak{R}_{l,1},\cdots,\mathfrak{R}_{l,k})^T$. We also introduce the following sets
\begin{equation*}
    \Sigma_1:=\Big\{j\in{\Omega} : \mathfrak{R}_{1,j} > \mathfrak{R}_{2,j}\Big\},\ \Sigma_2:=\Big\{j\in {\Omega} : \mathfrak{R}_{2,j}> \mathfrak{R}_{1,j}\Big\} \ \text{and}\ \Sigma_0:= \Big\{j\in{\Omega}\ :\  \mathfrak{R}_{1,j} = \mathfrak{R}_{2,j}\Big\}.
\end{equation*}
The set $\Sigma_1$ represents the region of ${\Omega}$ where strain-1 is able to spread more quickly than strain-2. Conversely, $\Sigma_2$ is the region of $\Omega$ where strain-2 is more infectious than strain-1. The set $\Sigma_0$ is the region where strain-1 and strain-2 local reproduction functions are equal.

 We complete this section by introducing the high/low-risk patches.  Fix $j\in\Omega$ and $l\ne p=1,2$. Patch-$j$ is said to be high-risk (resp. low-risk) patch for strain-$l$ if the local reproduction number $\mathfrak{R}_l(N)$ is bigger (resp. smaller) than one on that patch, that is $\mathfrak{R}_{l,j}(N)>1$ (resp. $\mathfrak{R}_{l,j}(N)<1$). Hence the sets $H_l^{+}$ and $H_l^{-}$ given by 
 $$
 H_l^+:=\big\{j\in\Omega : \mathfrak{R}_{l,j}(N)>1\big\}\quad \text{and}\quad H_l^-:=\big\{j\in\Omega : \mathfrak{R}_{l,j}(N)<1\big\}
 $$
are called the strain-$l$'s high-risk region and low-risk region, respectively. Note that in the absence of movement and when the other strain is absent, strain-$l$ is endemic on any patch in its high-risk region while it goes extinct on any patch in its low-risk region. Hence, thanks to the competition exclusion-principle, when $d_S=d_1=d_2=0$, strain-$l$ is dominant and drives out strain-$p$ on any patch in $\Sigma_l\cap H_l^+$. When $\Sigma_1\cap H_1^+\ne\emptyset$ and $\Sigma_2\cap H_2^+\ne \emptyset$, Theorem \ref{T5} below establishes the existence of a coexistence EE solution of \eqref{model-eq1} for small diffusion rates of the population under some appropriate hypothesis.

\subsection{Main Results}
We state our main results. Our first result reads as follows.

\begin{tm}\label{T1}
\begin{itemize}
\item[\rm (i)]
There is a positive number $m_0>0$ such that 
\begin{equation}\label{T1-e0}
    \liminf_{t\to\infty}\min_{j\in\Omega}S_j(t)\ge m_0
\end{equation}
for every solution $(S(t),I_1(t),I_2(t))$ of \eqref{model-eq1} with initial data in $\mathcal{E}$.
\item[\rm (ii)]
The DFE  is linearly stable if $\mathcal{R}_0(N)<1$ and unstable if $ \mathcal{R}_0(N)>1$. Furthermore, the following conclusions hold.
\begin{itemize}
    \item[\rm (ii-1)]
If $\mathcal{R}_{0,l}(N)\le \frac{1}{k}$ for some $l=1,2$, then strain-l eventually dies out. In particular if $\mathcal{R}_0(N)\le \frac{1}{k}$, then the DFE is globally stable. 

\item[\rm (ii-2)] If $\mathcal{R}_{0,l}(N)<1$ for some $l=1,2$, then there is $d^l>0$ such that the strain-l eventually dies out for any diffusion of susceptible population $d_S>d^l$.  In particular if $\mathcal{R}_0(N)<1$, then there is $d^*>0$ such that the DFE is globally stable for any diffusion of susceptible population $d_S>d^*$.
\item[\rm (ii-3)]
If $\mathcal{R}_{0,l}(N)>1$ for each $l=1,2$, then   the disease is persistence in the sense that there is a positive number $m_*>0$ such that for every solution $(S(t),I_{1}(t),I_2(t))$ with a positive initial data in $\mathcal{E}$, 
\begin{equation}\label{T1-e1}
    \liminf_{t\to \infty}\min_{j\in\Omega}\sum_{l=1}^2I_{l,j}(t)\ge m_*.
\end{equation}
Furthermore,
\begin{equation}\label{T1-e2}
    \limsup_{t\to\infty}\max_{j\in\Omega}S_j(t)\le s_{\max} \quad \text{and}\quad \liminf_{t\to\infty}\min_{j\in\Omega}S_j(t)\ge s_{\min},
\end{equation}
where $s_{\max}:=\max_{l=1,2}\max_{j\in\Omega}\frac{\gamma_{l,j}}{\beta_{l,j}}$ and $s_{\min}:=\min_{l=1,2}\min_{j\in\Omega}\frac{\gamma_{l,j}}{\beta_{l,j}}$.

\end{itemize}

\end{itemize}
\end{tm}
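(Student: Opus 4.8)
The plan is to handle the four assertions in turn, relying on the conservation law \eqref{Eq1:1}, the comparison principle for the cooperative matrices $d_S\mathcal{L}$ and $d_l\mathcal{L}$, and the sign relation between $\mathcal{R}_{0,l}(N)-1$ and $\lambda_*(\tfrac{N}{k}\mathcal{F}_l-\mathcal{V}_l)$ recalled above. For the persistence of the susceptibles in (i), I would first bound the total susceptible mass $W(t):=\sum_{j}S_j(t)$ from below. Summing the $S$-equation and using that $\mathcal{L}$ has zero column sums, the diffusion term cancels and, writing $\gamma_{\min}=\min_{l,j}\gamma_{l,j}$, $\beta_{\max}=\max_{l,j}\beta_{l,j}$ and $\max_jS_j\le W$, one obtains the scalar inequality $\dot W\ge(N-W)(\gamma_{\min}-\beta_{\max}W)$; hence $\liminf_{t\to\infty}W(t)\ge c_0:=\min\{N,\gamma_{\min}/\beta_{\max}\}>0$. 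To pass from total mass to each patch, drop the nonnegative recovery source and use $\sum_l\beta_{l,j}I_{l,j}S_j\le\beta_{\max}N\,S_j$ (since $\sum_lI_{l,j}\le N$) to get $\dot S\ge(d_S\mathcal{L}-\beta_{\max}N\,\mathrm{Id})S$. As this matrix is cooperative and $\mathcal{L}$ is irreducible, the semigroup $e^{\tau d_S\mathcal{L}}$ is entrywise positive, so comparison over a fixed window $\tau$ yields $S_j(t)\ge e^{-\beta_{\max}N\tau}\delta_\tau\,W(t-\tau)$ for every $j$, where $\delta_\tau>0$ is the least entry of $e^{\tau d_S\mathcal{L}}$. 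Together with the lower bound on $W$ this gives \eqref{T1-e0} with an explicit $m_0$.

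For the linear stability in (ii), the point is that the $Q_l$-equations in \eqref{model-eq4} decouple, so the spectrum of the linearization on the mass-zero subspace is $\sigma(\tfrac{N}{k}\mathcal{F}_1-\mathcal{V}_1)\cup\sigma(\tfrac{N}{k}\mathcal{F}_2-\mathcal{V}_2)$ together with the spectrum of $d_S\mathcal{L}$ restricted to $\{\sum_jP_j=0\}$, the latter being strictly negative since $\lambda_*(\mathcal{L})=0$ is simple with eigenvector $\mathbf{1}\notin\{\sum_j\cdot=0\}$. Thus $\mathbf{E}^0$ is stable iff $\lambda_*(\tfrac{N}{k}\mathcal{F}_l-\mathcal{V}_l)<0$ for both $l$, i.e. iff $\mathcal{R}_0(N)<1$, and unstable if some $\mathcal{R}_{0,l}(N)>1$. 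For (ii-1) I would use a Lyapunov functional: let $\phi_l>0$ be a left eigenvector of $N\mathcal{F}_l-\mathcal{V}_l$ for its principal eigenvalue $\lambda_l$, whose sign equals that of $k\mathcal{R}_{0,l}(N)-1$, and set $V_l:=\phi_l\cdot I_l$. Using $S_j\le N$ one computes $\dot V_l=\lambda_lV_l-\sum_j\phi_{l,j}\beta_{l,j}(N-S_j)I_{l,j}\le\lambda_lV_l$. When $\mathcal{R}_{0,l}(N)\le\tfrac1k$ we have $\lambda_l\le0$, so $V_l$ is nonincreasing; the strict case $\lambda_l<0$ forces $I_l\to0$, while the borderline case $\lambda_l=0$ requires a LaSalle-invariance argument applied to the residual dissipation $\sum_j\phi_{l,j}\beta_{l,j}(N-S_j)I_{l,j}$, which cannot stay away from $0$ on a nontrivial invariant set. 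Once both $I_l\to0$, the $S$-equation is asymptotically $\dot S=d_S\mathcal{L}S$, so by conservation of $N$ one gets $S\to\tfrac{N}{k}\mathbf{1}$, hence global stability of $\mathbf{E}^0$.

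For (ii-2) I would quantify the homogenization of $S$ as $d_S\to\infty$. Write $\bar S:=\tfrac1k\sum_jS_j\le\tfrac{N}{k}$ and $\tilde S:=S-\bar S\mathbf{1}$, which lies in $\{\sum_j\cdot=0\}$ and solves $\dot{\tilde S}=d_S\mathcal{L}\tilde S+(R-\bar R\mathbf{1})$ with $R$ the reaction vector and $\bar R$ its mean. Since $\mathcal{L}$ is symmetric with spectral bound $-\mu<0$ on this subspace, and the forcing is uniformly bounded by a constant $C$ (because $0\le S_j,I_{l,j}\le N$), the estimate $\tfrac{d}{dt}\|\tilde S\|\le-d_S\mu\|\tilde S\|+C$ gives $\limsup_{t\to\infty}\max_jS_j\le\tfrac{N}{k}+C/(d_S\mu)$. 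As $\mathcal{R}_{0,l}(N)<1$ means $\lambda_*(\tfrac{N}{k}\mathcal{F}_l-\mathcal{V}_l)<0$, continuity of $\lambda_*$ under the diagonal perturbation lets me fix $\varepsilon>0$ with $\lambda_*((\tfrac{N}{k}+\varepsilon)\mathcal{F}_l-\mathcal{V}_l)<0$; choosing $d^l$ so that $C/(d_S\mu)<\varepsilon$ for $d_S>d^l$, the comparison $\dot I_l\le((\tfrac{N}{k}+\varepsilon)\mathcal{F}_l-\mathcal{V}_l)I_l$ forces $I_l\to0$, and applying this to both strains yields the global stability statement.

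Finally, for (ii-3) the susceptible bounds \eqref{T1-e2} follow from a max/min-patch (upper Dini-derivative) argument: at a patch $j_0$ realizing $\max_jS_j$ one has $[\mathcal{L}S]_{j_0}\le0$, while $\sum_l\beta_{l,j_0}I_{l,j_0}\bigl(\tfrac{\gamma_{l,j_0}}{\beta_{l,j_0}}-S_{j_0}\bigr)<0$ once $S_{j_0}>s_{\max}$ and some infected are present at $j_0$, so $\max_jS_j$ cannot remain above $s_{\max}$; the lower bound at a minimizing patch is symmetric, giving $s_{\min}$. The crux, however, is the uniform persistence \eqref{T1-e1} (which also supplies the needed infected at every patch): I would establish it via abstract uniform-persistence theory on the compact invariant set $\mathcal{E}$, taking the extinction set $\{I_1\equiv0\}\cup\{I_2\equiv0\}$ as boundary. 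The unique boundary equilibrium is $\mathbf{E}^0$, and when $\mathcal{R}_{0,l}(N)>1$ for both $l$ its instability (positive principal eigenvalue of $\tfrac{N}{k}\mathcal{F}_l-\mathcal{V}_l$) makes it a uniform weak repeller, with acyclicity immediate since $\mathbf{E}^0$ is the only boundary invariant set; the theory then produces a uniform $m_*>0$. Verifying the weak-repeller property and assembling the persistence hypotheses is where I expect the main difficulty to lie.
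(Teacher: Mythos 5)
Parts (i), (ii), (ii-1) and (ii-2) of your proposal are sound. Your (i) is essentially the paper's argument (Lemma \ref{lem-0}: sum the $S$-equation, get the scalar inequality for $W=\sum_jS_j$, then spread the mass to all patches through the strong positivity of $e^{\tau d_S\mathcal{L}}$ over a fixed time window). Your (ii-2) is the paper's proof in energy form: the paper runs the same homogenization estimate on $\widehat{S}$ via variation of constants (see \eqref{Z9}) rather than a differential inequality for $\|\tilde S\|$, then perturbs the principal eigenvalue exactly as you do. Your (ii-1) is a genuinely different route worth noting: the paper handles the borderline case $\mathcal{R}_{0,l}(N)=\frac1k$ by tracking the monotone envelope $c_l(t)=\inf\{c>0: I_l(t)\le_1 cE_l^*\}$ and deriving a contradiction from $c_l^{\infty}>0$ via the conservation law, whereas your left-eigenvector Lyapunov function $V_l=\phi_l\cdot I_l$ with LaSalle also works, and your invariance step does close: on a complete trajectory in $\{\dot V_l=0\}$ with $I_l\not\equiv\mathbf{0}$, irreducibility of $\mathcal{L}$ forces $I_{l,j}>0$ on every patch, hence $S_j=N$ on every patch, contradicting $\sum_jS_j\le N$ on $\mathcal{E}$. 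Your Dini-derivative argument for \eqref{T1-e2} is likewise a valid substitute for the paper's explicit super/subsolutions, once \eqref{T1-e1} is available.

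The genuine gap is in your persistence setup for \eqref{T1-e1}. Taking the extinction set to be the union $\{I_1\equiv\mathbf{0}\}\cup\{I_2\equiv\mathbf{0}\}$ is wrong on two counts. First, when $\mathcal{R}_{0,l}(N)>1$ for both $l$, $\mathbf{E}^0$ is \emph{not} the only invariant set in that boundary: by \cite[Theorem 4]{LP2023} each face carries single-strain endemic equilibria (the sets $\mathcal{E}^*_1,\mathcal{E}^*_2$ of the paper), so your acyclicity covering is incomplete. Second, and more fundamentally, uniform persistence relative to that union would assert persistence of \emph{each} strain separately, which is false at this level of generality: it contradicts the competition--exclusion results (Theorems \ref{T2}--\ref{T4}) and Simulation \ref{Simulation1}(c), where both reproduction numbers exceed one yet strain-2 dies out. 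Since \eqref{T1-e1} concerns total infection, the correct extinction set is the intersection $\{I_1=I_2=\mathbf{0}\}$, on which the only compact invariant set is $\{\mathbf{E}^0\}$. Even then, the verification you defer as ``the main difficulty'' is precisely where the paper's Lemmas \ref{lem1} and \ref{lem2} do the work: linear instability of $\mathbf{E}^0$ only shows that solutions cannot linger near the DFE (Lemma \ref{lem1}); to make $\mathbf{E}^0$ a uniform weak repeller for total infection one must additionally show that small total infection forces $S$ toward $\frac Nk\mathbf{1}$, which the paper obtains from the spectral-gap estimate on $\widehat S$ combined with $\sum_jS_j=N-\sum_{j}\sum_lI_{l,j}$. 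Finally, the abstract theory (applied, as in the paper, with $\Xi=\sum_j\sum_lI_{l,j}$) yields persistence of the \emph{total} infected mass; upgrading this to the patchwise minimum in \eqref{T1-e1} requires the Harnack-type inequality of Lemma \ref{Harnck-lemma} (i.e.\ \eqref{DP1}), a step absent from your sketch --- and since \eqref{T1-e2} in your plan relies on infected being present at the maximizing/minimizing patch, that omission propagates.
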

Theorem \ref{T1}-{\rm (i)} indicates that the susceptible population persists uniformly, hence it will never be driven to extinction. On the other hand, thanks to Theorem \ref{T1}-{\rm (ii-1)}, when $\mathcal{R}_{0}(N)\le \frac{1}{k}$, irrespective of the dispersal rate of the susceptible population, the disease will eventually go extinct. Recall that $\mathcal{R}_0(N)$ is independent of the dispersal rate of the susceptible population, and  $k$ is the total number of patches. In the  particular case of $k=1$, the model reduces the single-patch ODE model for which it is well known that the DFE is globally stable if and only if  $\mathcal{R}_0(N)\le 1.$ So,  our result is sharp in the case of single-patch model. Note  also that,  for large diffusion rate of the susceptible population, Theorem \ref{T1}-{\rm (ii-2)} shows that the DFE is globally stable when $\mathcal{R}_0(N)<1$. When $k\ge 2$, that is there are at least two patches, the question of the global stability of the DFE  remains open when $\frac{1}{k}<\mathcal{R}_0(N)\le 1$ and the dispersal rate of the susceptible is small. We expect that the answer to this question will dependent delicately on the details on the spatial distribution of the  recovery and  transmission rates and how the dispersal rates are selected. For example, when dispersal rates are uniform, Theorem \ref{T4} below shows that the DFE is globally stable whenever $\mathcal{R}_0(N)\le 1$. Theorem \ref{T3} below also confirms that the same conclusion holds when the local reproduction functions are spatially homogeneous. In the case of the single-strain PDE-SIS model \eqref{PDE-model}, Castellano and Salako \cite{CS2023b} recently proved, under some appropriate hypotheses, the existence of at least two EE solutions when $\mathcal{R}_{0,1}(N)<1$ and $d_S$ is sufficiently small. We suspect that such result will also hold for the multiple patches SIS model \eqref{model-eq1}.

When each strain's basic reproduction number is bigger than one, Theorem \ref{T1}-{\rm (ii-3)} establishes the uniform persistence of the disease, but does not exclude the possibility of the extinction of one strain. Hence, it becomes pertinent to examine the conditions on the parameters of the model which lead to a competition-exclusion of the strains. This is carried out in Theorems \ref{T2}, \ref{T3}, and \ref{T4} below. 

Our first result on the competition-exclusion of the strains when one local reproductive number is spatially homogeneous reads a follows.

\begin{tm}\label{T2}
Suppose that $\mathcal{R}_{0,l}(N)>1$, $l=1,2$, and $\mathfrak{R}_{1,j}$ is constant in $j\in\Omega$. 

\begin{itemize}
    \item[\rm (i)] If $ \mathcal{R}_{0,1}(N)>\mathcal{R}_{0,2}(N)$, then the single strain-$1$ EE is linearly stable. Furthermore, if $\Sigma_1=\Omega$, then the  single strain-$1$ EE is globally stable with respect to positive initial data.

    \item[\rm (ii)] If $ \mathcal{R}_{0,1}(N)<\mathcal{R}_{0,2}(N)$, then the single strain-$1$ is unstable. Furthermore, if $\Sigma_2=\Omega$, then  for solutions of \eqref{model-eq1} with positive initial data, the strain-$1$ infected population eventually goes extinct while strain-$2$ persists uniformly in time. 
    
\end{itemize}
    
\end{tm}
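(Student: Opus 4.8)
The plan is to exploit the spatial homogeneity $\mathfrak{R}_{1,j}\equiv\mathfrak{R}_1$ to pin down the single strain-$1$ EE explicitly, reduce the stability question to a scalar comparison of reproduction numbers, and then upgrade the linear conclusions to global ones via a Lyapunov functional together with the persistence estimates of Theorem \ref{T1}. First I would record that since $\mathfrak{R}_{1,j}\equiv\mathfrak{R}_1$ we have $\beta_1=\mathfrak{R}_1\gamma_1$, and that the constant vectors $S^*=\frac{1}{\mathfrak{R}_1}{\bf 1}$ and $I_1^*=\left(\frac{N}{k}-\frac{1}{\mathfrak{R}_1}\right){\bf 1}$ solve the strain-$1$ part of \eqref{model-eq3}: indeed $\beta_1\circ S^*=\gamma_1$ makes every reaction term vanish, while $\mathcal{L}S^*=\mathcal{L}I_1^*={\bf 0}$ because $\mathcal{L}{\bf 1}={\bf 0}$; positivity of $I_1^*$ is exactly $\mathcal{R}_{0,1}(N)=\frac{N}{k}\mathfrak{R}_1>1$ (the identity $\rho(\mathcal{F}_1\mathcal{V}_1^{-1})=\mathfrak{R}_1$ follows since ${\bf 1}$ is the Perron eigenvector of $\mathcal{V}_1^{-1}{\rm diag}(\gamma_1)$ with eigenvalue $1$, using $\mathcal{V}_1{\bf 1}=\gamma_1$). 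Invoking the uniqueness furnished by \cite{LP2023} (or a direct sweeping argument), this constant vector is the single strain-$1$ EE $(S^*,I_1^*,0)$ to be analyzed.

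Next I would linearize \eqref{model-eq2} at $(S^*,I_1^*,0)$. Since $I_2^*={\bf 0}$, the $Q_2$-equation decouples and is governed by $d_2\mathcal{L}+{\rm diag}(\beta_2\circ S^*-\gamma_2)=\frac{1}{\mathfrak{R}_1}\mathcal{F}_2-\mathcal{V}_2$; write $\Lambda_2$ for its spectral bound. The central observation is the sign identity: by the fact recalled after \eqref{R_0-l}, $\Lambda_2$ and $\frac{1}{\mathfrak{R}_1}\rho(\mathcal{F}_2\mathcal{V}_2^{-1})-1$ share the same sign, and since $\frac{1}{\mathfrak{R}_1}\rho(\mathcal{F}_2\mathcal{V}_2^{-1})>1\iff\mathcal{R}_{0,2}(N)>\frac{N}{k}\mathfrak{R}_1=\mathcal{R}_{0,1}(N)$, one obtains ${\rm sign}\,\Lambda_2={\rm sign}\,(\mathcal{R}_{0,2}(N)-\mathcal{R}_{0,1}(N))$. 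This is the key step linking the invasion eigenvalue of strain-$2$ to the comparison of the two basic reproduction numbers.

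I would then handle the complementary $(P,Q_1)$ block, which is block-lower-triangular. Its $P$-equation further decouples as $\dot P=\big(d_S\mathcal{L}-{\rm diag}(\beta_1\circ I_1^*)\big)P$ with a symmetric, strictly negative-definite generator (negative-semidefinite $\mathcal{L}$ plus a strictly negative diagonal), so $P\to{\bf 0}$; feeding this into $\dot Q_1=d_1\mathcal{L}Q_1+\beta_1\circ I_1^*\circ P$ and using the conservation identity \eqref{Eq1:1} (which removes the marginal mode $Q_1={\bf 1}$) forces $Q_1\to{\bf 0}$. Hence the $(P,Q_1)$ block is always stable on the tangent space to $\mathcal{E}$, and stability of the whole EE is decided by the sign of $\Lambda_2$: this yields the linear stability in (i) ($\mathcal{R}_{0,1}>\mathcal{R}_{0,2}\Rightarrow\Lambda_2<0$) and the instability in (ii) ($\mathcal{R}_{0,1}<\mathcal{R}_{0,2}\Rightarrow\Lambda_2>0$, so the $Q_2$-block contributes an eigenvalue with positive real part).

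For the global conclusions I would pass to Lyapunov arguments combined with Theorem \ref{T1}. In (i), under $\Sigma_1=\Omega$ (i.e. $\mathfrak{R}_{2,j}<\mathfrak{R}_1$ for all $j$), I would test $V=\sum_j\big[(S_j-S^*\ln S_j)+(I_{1,j}-I_1^*\ln I_{1,j})+I_{2,j}\big]$; the symmetry $L_{i,j}=L_{j,i}$ turns each dispersal contribution into a nonpositive quadratic form $-\tfrac12 d\,x^*\sum_{i,j}L_{j,i}\frac{(x_i-x_j)^2}{x_ix_j}\le 0$, Theorem \ref{T1}(i) keeps the orbit in the interior $\liminf_j S_j\ge s_{\min}>0$, and the strain-$2$ reaction contribution reduces to $S^*\sum_j\gamma_{2,j}(\mathfrak{R}_{2,j}S_j-1)\frac{I_{2,j}}{S_j}$, which I would dominate using $\mathfrak{R}_{2,j}<\mathfrak{R}_1=1/S^*$ before applying LaSalle's invariance principle to pin the $\omega$-limit set at $(S^*,I_1^*,0)$. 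In (ii), under $\Sigma_2=\Omega$, strain-$2$'s uniform persistence follows from $\mathcal{R}_{0,2}(N)>1$ and the repelling character of the boundary $\{I_2={\bf 0}\}$ ($\Lambda_2>0$) via acyclicity/uniform-persistence theory, while strain-$1$'s extinction follows from $\frac{d}{dt}\sum_j I_{1,j}=\sum_j\gamma_{1,j}(\mathfrak{R}_1 S_j-1)I_{1,j}$ once one shows strain-$2$ drives $S$ below the constant threshold $1/\mathfrak{R}_1$. The hard part throughout will be this global step: making the strain-$2$ term in $V$ genuinely nonpositive (respectively, making the threshold-crossing of $S$ quantitative) without assuming $\mathfrak{R}_2$ homogeneous, for which the pointwise ordering $\Sigma_1=\Omega$ (resp. $\Sigma_2=\Omega$) and the sharp bounds $s_{\min},s_{\max}$ of Theorem \ref{T1}(ii-3) are the essential inputs.
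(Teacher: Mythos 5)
Your identification of the equilibrium and your linear analysis are sound and essentially reproduce the paper's Lemma \ref{lemA-3}: the same constant EE $(r_{1,\min}{\bf 1},(\tfrac{N}{k}-r_{1,\min}){\bf 1},{\bf 0})$, the same decoupling of the $Q_2$-block, and the same sign identity $\mathrm{sign}\,\lambda_*\big(d_2\mathcal{L}+{\rm diag}(r_{1,\min}\beta_2-\gamma_2)\big)=\mathrm{sign}\,(\mathcal{R}_{0,2}(N)-\mathcal{R}_{0,1}(N))$ via $r_{1,\min}=\tfrac{N}{k\,\mathcal{R}_{0,1}(N)}$. (For instability you should still note, as the paper does, that $\Lambda_2>0$ lies in the resolvent set of $d_S\mathcal{L}$ and $d_1\mathcal{L}$, so the $W,\hat W$ components of a genuine eigenvector exist, and that summing the three equations forces $\sum_j(W_j+\hat W_j+\tilde W_j)=0$; these are routine.)

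The genuine gap is in both global steps, and it is not a technicality. Compute $\frac{dV}{dt}$ for your functional: besides the good terms, the $(S,I_1)$ reaction part produces the sign-indefinite cross term $\sum_j\beta_{1,j}(S_j-S^*)(I_{1,j}-I_1^*)$, which nothing cancels. In SIS/SIR models with demography a $-\mu(S-S^*)^2/S$ term absorbs it, and in a single patch one substitutes $S=N-I$; here neither device is available because only the \emph{global} total $\sum_j(S_j+I_{1,j}+I_{2,j})$ is conserved, not the per-patch totals. Concretely, for $k=2$ take the mean-zero perturbation $S=(S^*+s,S^*-s)$, $I_1=(I_1^*+u,I_1^*-u)$, $I_2={\bf 0}$ (which respects the constraint defining $\mathcal{E}$): the cross term contributes $+(\beta_{1,1}+\beta_{1,2})su$, while the negative terms are $O(s^2)+O(d_1u^2)$, so for small $d_1$ the quadratic form is indefinite and $\frac{dV}{dt}>0$ in some directions arbitrarily close to the EE. Your treatment of the strain-$2$ term also fails as stated: that term is $\sum_j\frac{S^*\beta_{2,j}}{S_j}(S_j-r_{2,j})I_{2,j}$, nonpositive only where $S_j\le r_{2,j}$; but under $\Sigma_1=\Omega$ one has $s_{\max}=r_{2,\max}$, so Theorem \ref{T1}(ii-3) only yields $S_j\lesssim r_{2,\max}$, not the pointwise bound $S_j\le r_{2,j}$ you need. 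This is exactly why the paper's Lyapunov argument appears only in Theorem \ref{T3}, where \emph{both} $\mathfrak{R}_1$ and $\mathfrak{R}_2$ are constant. A further prerequisite you skip: your $V$ contains $\ln I_{1,j}$, so you must first establish uniform persistence of strain-$1$ alone — Theorem \ref{T1}(ii-3) only controls $I_1+I_2$; in the paper this is Lemma \ref{lemA-1}, itself a nontrivial sub-solution/contradiction argument using the Harnack-type inequality \eqref{DP1}. The same structural difficulty defeats your sketch of (ii): showing that strain-$2$ forces $S$ below the threshold $r_{1,\min}$ in a quantitative (time-averaged) sense \emph{is} the whole proof, and the paper accomplishes it by extracting limit entire solutions along a hypothetical non-extinction sequence, deriving the integral bound $\int_{-\infty}^0\sum_j(r_{1,\min}-S^{\infty}_j)\,d\sigma<\infty$ from boundedness of $\sum_jI^{\infty}_{1,j}$, and showing $\sum_jI^{\infty}_{2,j}$ would then grow backward in time like $e^{(r_{1,\min}-r_{2,\max})\tau}$, a contradiction, with Lemma \ref{lemA-2} supplying the needed lower bound on $I_2^{\infty}$. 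Generic acyclicity/persistence theory plus the instability of the boundary equilibria, as you invoke it, does not substitute for these rigidity estimates.
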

Note that Theorem \ref{T2} can also be stated if we interchange the roles of strain-1 and strain-2. Now, suppose that strain-1 local reproduction function is spatially homogeneous, that is, its transmission and recovery rates are proportional to each other. If it also has the largest basic reproduction number, Theorem \ref{T2}-{\rm (i)} indicates that it cannot be invaded at equilibrium by an initially small size of the infected population with strain-2. If in addition, it strictly locally maximizes the local reproduction functions on all patches, then it drives the other strain to extinction. However, if strain-1 has a smaller basic reproduction number, Theorem \ref{T2}-{\rm (ii)} shows that it will be invaded at equilibrium by the other strain. Interestingly, these results hold irrespective of the dispersal rates of the population and suggest that the asymptotic dynamics of the solutions are independent of the dispersal rates when the local reproduction functions are spatially homogeneous. Our next result answers this question with an affirmation and provide a complete dynamics of solutions of \eqref{model-eq1} when both local reproduction functions are spatially homogeneous.

\begin{tm}\label{T3}
    Suppose that $\mathfrak{R}_{l,j}$ is constant in $j\in\Omega$ for each $l\in\{1,2\}$.
    \begin{itemize}
        \item[\rm (i)] If $\mathcal{R}_0(N)\le 1$, then the DFE is globally stable.

        \item[\rm (ii)] If $\mathcal{R}_{0,l}(N)>\max\{1,\mathcal{R}_{0,q}(N)\}$, for $l\ne q\in\{1,2\}$. Then, the strain-$l$ EE solution is globally stable.
    \end{itemize}
\end{tm}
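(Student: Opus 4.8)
The plan is to use the spatial homogeneity of the local reproduction functions to collapse every hypothesis into a scalar comparison, then to prove (i) by a Lyapunov argument for the explicitly-known disease-free equilibrium and to treat (ii) by combining Theorem~\ref{T2} with the single-strain results of \cite{LP2023}. \emph{Reduction.} Write $\mathfrak{R}_l$ for the common value of $\mathfrak{R}_{l,j}$, so $\beta_l=\mathfrak{R}_l\gamma_l$. Since $\mathcal{L}{\bf 1}={\bf 0}$ we get $\mathcal{V}_l{\bf 1}=\gamma_l$, hence $\mathcal{V}_l^{-1}\gamma_l={\bf 1}$ and $\mathcal{F}_l\mathcal{V}_l^{-1}\gamma_l=\mathcal{F}_l{\bf 1}=\beta_l=\mathfrak{R}_l\gamma_l$. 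Thus $\gamma_l>{\bf 0}$ is a positive eigenvector of the nonnegative irreducible matrix $\mathcal{F}_l\mathcal{V}_l^{-1}$ for the eigenvalue $\mathfrak{R}_l$, so the Perron--Frobenius theorem yields $\rho(\mathcal{F}_l\mathcal{V}_l^{-1})=\mathfrak{R}_l$ and, by \eqref{R_0-l}, $\mathcal{R}_{0,l}(N)=\tfrac{N}{k}\mathfrak{R}_l=\mathfrak{R}_{l,j}(N)$. Consequently $\mathcal{R}_{0,l}(N)$ is independent of $d_l$ and of the profile of $\gamma_l$; $\mathcal{R}_{0,1}(N)>\mathcal{R}_{0,2}(N)\iff\mathfrak{R}_1>\mathfrak{R}_2$; $\mathcal{R}_{0,l}(N)\gtrless 1\iff\mathfrak{R}_{l,j}(N)\gtrless 1$ for every $j$; and each of $\Sigma_1,\Sigma_2,H_l^{\pm}$ is either empty or all of $\Omega$. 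A bonus is that the strain-$l$ EE is explicit and spatially constant: solving \eqref{model-eq3} with the competitor absent forces $S^\ast=\tfrac{1}{\mathfrak{R}_l}{\bf 1}$ and $I_l^\ast=\big(\tfrac{N}{k}-\tfrac{1}{\mathfrak{R}_l}\big){\bf 1}$, positive precisely when $\mathcal{R}_{0,l}(N)>1$, which also settles uniqueness in this regime.

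\emph{Part (i).} Here $\mathfrak{R}_l\le k/N$ for both $l$ and the DFE is ${\bf E}^0=(\tfrac{N}{k}{\bf 1},{\bf 0},{\bf 0})$. I would use a Lyapunov function combining a quadratic penalty on the susceptible deviation with linear terms in the infecteds, $V=\tfrac12\sum_j(S_j-\tfrac{N}{k})^2+\sum_{l}a_l\sum_j I_{l,j}$ with suitable $a_l>0$. The dispersal contribution of the quadratic part is $d_S\,S^{T}\mathcal{L}S\le 0$ since $\mathcal{L}$ is symmetric with $\lambda_*(\mathcal{L})=0$, while the infected dispersal terms vanish upon summation because $\mathcal{L}{\bf 1}={\bf 0}$; the reaction terms are then treated using $\beta_l=\mathfrak{R}_l\gamma_l$, the bound $\mathfrak{R}_l\le k/N$, and the conservation law \eqref{Eq1:1}, the delicate point being the indefinite pointwise sign of $S_j-\tfrac{N}{k}$, which I would resolve by LaSalle's invariance principle together with the uniform persistence of $S$ from Theorem~\ref{T1}(i) and the irreducibility of $\mathcal{L}$. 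Equivalently, since the $I_l$-equation $\tfrac{d}{dt}I_l=\big({\rm diag}(\beta_l\circ S)-\mathcal{V}_l\big)I_l$ is cooperative and monotone in $S$, one may dominate $I_l$ by the single-strain-$l$ trajectory and invoke the global stability of the single-strain DFE from \cite{LP2023} for constant $\mathfrak{R}_l$ and $\mathcal{R}_{0,l}(N)\le 1$.

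\emph{Part (ii).} Take $l=1$, so $\mathfrak{R}_1>\max\{k/N,\mathfrak{R}_2\}$. If also $\mathcal{R}_{0,2}(N)>1$, then both single-strain numbers exceed one, $\mathcal{R}_{0,1}(N)>\mathcal{R}_{0,2}(N)$, and $\Sigma_1=\Omega$ by the Reduction; this is exactly the hypothesis of Theorem~\ref{T2}(i), giving global stability of the strain-$1$ EE. If instead $\mathcal{R}_{0,2}(N)\le 1$, I would first show $I_2\to{\bf 0}$ by the part-(i) argument applied to strain $2$ (which is subcritical, and whose growth is only further suppressed by the depletion of $S$ caused by strain $1$), reducing the limiting dynamics to the single-strain-$1$ model, and then invoke the global stability of the strain-$1$ EE for $\mathcal{R}_{0,1}(N)>1$ from \cite{LP2023}. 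To unify both subcases I would build one functional $V=\sum_j\big[c\,g(S_j/s^\ast)+d\,g(I_{1,j}/i^\ast)\big]+e\sum_j I_{2,j}$, with $g(x)=x-1-\ln x$, $s^\ast=1/\mathfrak{R}_1$, $i^\ast=\tfrac{N}{k}-1/\mathfrak{R}_1$ and $c,d,e>0$.

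\emph{Main obstacle.} The crux is verifying $\dot V\le 0$ for the combined functional of Part~(ii) and identifying the limit set. The $g$-terms produce a graph-Laplacian dissipation (by symmetry of $\mathcal{L}$) plus cross terms that cancel via the equilibrium identities $d_S\mathcal{L}S^\ast+\gamma_1\circ I_1^\ast-\beta_1\circ S^\ast\circ I_1^\ast={\bf 0}$ and $d_1\mathcal{L}I_1^\ast+(\beta_1\circ S^\ast-\gamma_1)\circ I_1^\ast={\bf 0}$; the sensitive contribution is the coefficient of $I_{2,j}$, which after using $\beta_2=\mathfrak{R}_2\gamma_2$ and $S^\ast=\tfrac{1}{\mathfrak{R}_1}{\bf 1}$ reduces to a multiple of $\mathfrak{R}_2 s^\ast-1=\tfrac{\mathfrak{R}_2}{\mathfrak{R}_1}-1$. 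It is precisely the strict inequality $\mathfrak{R}_1>\mathfrak{R}_2$ that makes this negative, placing strain $2$ strictly below its invasion threshold at the strain-$1$ EE and driving it to extinction. Finally LaSalle's invariance principle, the uniform persistence of $S$ from Theorem~\ref{T1}(i), and the irreducibility of $\mathcal{L}$ pin the $\omega$-limit set to the strain-$1$ EE.
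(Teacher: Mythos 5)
Your Reduction is correct and matches facts the paper uses implicitly (with $\mathcal{L}{\bf 1}={\bf 0}$, $\gamma_l$ is a positive eigenvector of $\mathcal{F}_l\mathcal{V}_l^{-1}$, so $\mathcal{R}_{0,l}(N)=\tfrac{N}{k}\mathfrak{R}_l$ and $\mathcal{R}_{0,1}(N)=\tfrac{N}{kr_{1,\min}}$ as in Lemma \ref{lemA-3}). Two of your three pieces essentially coincide with the paper's proof. For part (i), once you fix $a_l=r_{l,\min}-\tfrac{N}{k}$ (which is $\ge 0$ exactly under the subcritical hypothesis), your "delicate indefinite sign" dissolves: the cross term collapses to $-\sum_l\sum_j\beta_{l,j}(S_j-r_{l,\min})^2I_{l,j}$, and in fact, by the conservation law \eqref{Eq1:1}, your functional then equals the paper's Lyapunov function $V=\tfrac12\sum_jS_j^2+r_{1,\min}\sum_jI_{1,j}+r_{2,\min}\sum_jI_{2,j}$ up to an additive constant; the paper's Case 3 is exactly this computation plus LaSalle. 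Your first subcase of part (ii) (both reproduction numbers above one) is exactly the paper's Case 1: an appeal to Theorem \ref{T2}-(i) with $\Sigma_1=\Omega$.

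The genuine gap is the mixed case $\mathcal{R}_{0,l}(N)>1\ge\mathcal{R}_{0,q}(N)$, where neither of your routes closes. Route (a): the part-(i) argument cannot be "applied to strain $2$" in isolation, because the derivative of $\tfrac12\sum_j(S_j-\tfrac{N}{k})^2+a_2\sum_jI_{2,j}$ along the full system contains the strain-$1$ term $\sum_j(S_j-\tfrac{N}{k})\beta_{1,j}(r_{1,\min}-S_j)I_{1,j}$, which is strictly positive whenever $r_{1,\min}<S_j<\tfrac{N}{k}$ --- precisely the regime a supercritical strain $1$ creates; and the concluding appeal to \cite{LP2023} for global stability of the strain-$1$ EE at three arbitrary distinct dispersal rates is unsupported (this paper cites such a statement only under hypothesis {\bf (A2)}, and \cite{LP2023}'s EE results require extra conditions such as $d_S\ge d_1$), besides needing an asymptotic-autonomy argument to pass from $I_2\to{\bf 0}$ to the limit system. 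Route (b): the claimed cancellation in the unified Volterra functional is not real. The $I_2$-block can indeed be closed (choose $e=c(1-s^\ast/r_{2,\min})$, giving $-c\beta_{2,j}\tfrac{s^\ast}{r_{2,\min}S_j}(S_j-r_{2,\min})^2I_{2,j}$, which is your $\mathfrak{R}_2/\mathfrak{R}_1-1$ mechanism made rigorous), but with $c=d$ the combined $S$--$I_1$ reaction terms equal $c\beta_{1,j}\tfrac{(S_j-s^\ast)}{S_j}\big(s^\ast(I_{1,j}-i^\ast)-i^\ast(S_j-s^\ast)\big)$, and the indefinite piece $(S_j-s^\ast)(I_{1,j}-i^\ast)$ is not removed by the equilibrium identities you list: since $S^\ast$ and $I_1^\ast$ are constant vectors, those identities are trivial and carry no cancellation. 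This is exactly the crux you flag, and as written it fails. A further slip: to exclude ${\bf E}^0$ from the LaSalle limit set you invoke the uniform persistence of $S$ (Theorem \ref{T1}-(i)), which is the wrong tool --- $S$ is strictly positive at the DFE; what is needed is that the DFE repels solutions with $\|I_l(0)\|_\infty>0$ when $\mathcal{R}_{0,l}(N)>1$, i.e.\ Lemma \ref{lem1}, as the paper uses. The paper's actual fix for the mixed case is to keep the single functional $V$ above: its derivative $-\tfrac{d_S}{2}\sum_{i,j}L_{i,j}(S_i-S_j)^2-\sum_l\sum_j\beta_{l,j}(S_j-r_{l,\min})^2I_{l,j}$ is $\le 0$ unconditionally, LaSalle shows the maximal invariant set consists of spatially homogeneous equilibria, which here is $\{{\bf E}^0,{\bf E}_1^*\}$ (no constant strain-$2$ EE exists since $\tfrac{N}{k}\le r_{2,\min}$), and Lemma \ref{lem1} eliminates ${\bf E}^0$.
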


When the local reproduction functions are spatially homogeneous, Theorem \ref{T3} establishes that the strain with the largest basic reproduction number drives the other strain to extinction, irrespective of the dispersal rates of the population. In this case, the dispersal rates of the populations play no role on the asymptotic dynamics of solutions as it is determined by that of the model with no dispersal rate. To gain some understanding on how the dispersal rates may affect the global dynamics of the solutions of \eqref{model-eq1}, in the next two results, we study the large time behaviors of solution in the special case of the following assumption.
\medskip

\noindent{\bf (A2)} $d:=d_1=d_2=d_S$.

\medskip

 \noindent Assumption {\bf (A2)} means that  the population dispersal rate is uniform and independent of the relevant subgroups of the population. In  our next result, we again establish the competition-exclusion of the strains when at least one has its basic reproduction number less or equal to one.

\begin{tm}\label{T4}
    Suppose that hypothesis {\bf (A2)} holds.   The following conclusions hold.
    \begin{itemize}
        \item[\rm (i)] If $\mathcal{R}_0(N)\le 1$, then the DFE is globally stable.
        \item[\rm (ii)] If $\mathcal{R}_{0,l}(N)>1\ge\mathcal{R}_{0,q}(N)$, $l\ne q\in\{1,2\}$, then the strain-l EE is globally stable with respect to positive initial data. 
    \end{itemize}
\end{tm}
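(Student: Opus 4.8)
The plan is to exploit the special algebraic structure forced by \textbf{(A2)}. First I would observe that the total population $T:=S+I_1+I_2$ obeys a \emph{closed linear} equation: summing the three equations of \eqref{model-eq2} with $d_S=d_1=d_2=d$, the reaction terms cancel pairwise and
$$\frac{d}{dt}T=d\mathcal{L}T.$$
Since $\mathcal{L}$ is symmetric and irreducible with $\lambda_*(\mathcal{L})=0$ simple and eigenvector ${\bf 1}$, and since $\sum_j T_j\equiv N$ is conserved, $T(t)\to \frac{N}{k}{\bf 1}$ exponentially fast. Writing $S=T-I_1-I_2$ then eliminates $S$, reducing \eqref{model-eq1} to the asymptotically autonomous $(I_1,I_2)$-system
$$\frac{d}{dt}I_l=d\mathcal{L}I_l+\Big(\tfrac{N}{k}\beta_l-\gamma_l-\beta_l\circ(I_1+I_2)\Big)\circ I_l+\beta_l\circ\varepsilon(t)\circ I_l,\qquad l=1,2,$$
where $\varepsilon(t):=T(t)-\frac{N}{k}{\bf 1}\to 0$ exponentially, hence is integrable on $[0,\infty)$. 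I set $M_l:=d\mathcal{L}+{\rm diag}(\frac{N}{k}\beta_l-\gamma_l)=\frac{N}{k}\mathcal{F}_l-\mathcal{V}_l$, so that $\lambda_*(M_l)\le 0$ exactly when $\mathcal{R}_{0,l}(N)\le 1$.

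For each $l$ let $\psi_l\gg 0$ be the left Perron eigenvector of the (irreducible, essentially nonnegative) matrix $M_l$, $\psi_l^{T}M_l=\lambda_*(M_l)\psi_l^{T}$, and consider $V_l:=\psi_l^{T}I_l\ge 0$. Differentiating along the reduced system gives
$$\frac{d}{dt}V_l=\lambda_*(M_l)V_l-\sum_{j\in\Omega}\psi_{l,j}\beta_{l,j}(I_{1,j}+I_{2,j})I_{l,j}+\sum_{j\in\Omega}\psi_{l,j}\beta_{l,j}\varepsilon_j(t)I_{l,j}.$$
When $\lambda_*(M_l)<0$ (strict subcriticality) the last term is an integrable decaying perturbation and $V_l\to 0$, hence $I_l\to 0$. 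When $\lambda_*(M_l)=0$ (the critical case $\mathcal{R}_{0,l}(N)=1$) I would integrate the inequality and use $\int_0^\infty\|\varepsilon\|<\infty$ together with $V_l\ge 0$ to deduce $\int_0^\infty\sum_j\psi_{l,j}\beta_{l,j}(I_{1,j}+I_{2,j})I_{l,j}\,ds<\infty$; Barbalat's lemma (the integrand is uniformly continuous, all trajectories being bounded in $\mathcal{E}$) then forces $I_{l,j}^2\to 0$, i.e. $I_l\to 0$. This settles part (i): if $\mathcal{R}_0(N)\le 1$ then $\lambda_*(M_1),\lambda_*(M_2)\le 0$, so $I_1,I_2\to 0$ and $S=T-I_1-I_2\to\frac{N}{k}{\bf 1}$, giving global stability of ${\bf E}^0$. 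The same computation applied to $l=q$ shows, in part (ii), that the subdominant strain-$q$ (with $\mathcal{R}_{0,q}(N)\le 1$) satisfies $I_q\to 0$; crucially this argument needs no information about strain-$l$.

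With $I_q\to 0$ and $\varepsilon\to 0$, the strain-$l$ equation is again asymptotically autonomous, its limiting system being the single-strain logistic equation $\frac{d}{dt}I_l=d\mathcal{L}I_l+(\frac{N}{k}\beta_l-\gamma_l-\beta_l\circ I_l)\circ I_l$. This limiting system is cooperative (the off-diagonal Jacobian entries are the $d\mathcal{L}_{j,i}\ge 0$) and strictly sublinear, so by the standard theory of monotone sublinear flows it has, when $\mathcal{R}_{0,l}(N)>1$, a unique positive equilibrium $I_l^*$ that is globally asymptotically stable among positive data; setting $S^*:=\frac{N}{k}{\bf 1}-I_l^*$, the triple $(S^*,I_l^*,0)$ is exactly the strain-$l$ EE (alternatively one may invoke \cite{LP2023} for the single-strain dynamics). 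Invoking the theory of asymptotically autonomous semiflows (convergence to a locally stable, globally attracting equilibrium of the limit system) I would then conclude $I_l(t)\to I_l^*$, hence $S(t)\to S^*$, which proves part (ii).

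I expect the main obstacle to be the two critical thresholds $\mathcal{R}_0(N)=1$ in (i) and $\mathcal{R}_{0,q}(N)=1$ in (ii), where a plain linear comparison against $M_l$ is inconclusive; the Lyapunov functionals $V_l$, together with the integrability of the perturbation $\varepsilon$ and Barbalat's lemma, are designed precisely to close these borderline cases. A secondary technical point is making the two successive asymptotically-autonomous reductions rigorous — first eliminating $S$ via $T\to\frac{N}{k}{\bf 1}$, then eliminating $I_q$ — and verifying the local asymptotic stability of $I_l^*$ needed to apply the convergence theorem.
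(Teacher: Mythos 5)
Your overall strategy coincides with the paper's: under \textbf{(A2)} the total $T=S+I_1+I_2$ solves the closed linear equation $\frac{d}{dt}T=d\mathcal{L}T$, hence $T(t)\to\frac{N}{k}{\bf 1}$ exponentially, and the subcritical strain is killed by pairing $I_l$ with the principal eigenvector of $M_l=\frac{N}{k}\mathcal{F}_l-\mathcal{V}_l$ (your left eigenvector $\psi_l$ is the paper's $E_l$, since $\mathcal{L}$ symmetric makes $M_l$ symmetric). The one real difference is how the critical threshold is closed. The paper does not split cases: it applies Cauchy--Schwarz, $\sum_j(E_{l,j}I_{l,j})^2\ge\frac{1}{k}\big(\sum_jE_{l,j}I_{l,j}\big)^2$, to turn the evolution of $\Sigma_l(t):=\sum_jE_{l,j}I_{l,j}$ into a logistic-type differential inequality $\Sigma_l'\le\big(N\beta_{\max}\sqrt{k}e^{t\tilde d_*}-\frac{1}{k}\Sigma_l\big)\Sigma_l$, which forces $\Sigma_l\to0$ uniformly for $\lambda_*(M_l)\le0$. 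You instead treat $\lambda_*(M_l)<0$ by exponential comparison and $\lambda_*(M_l)=0$ by integrating $V_l'=\lambda_*(M_l)V_l-Q(t)+P(t)$ and invoking Barbalat's lemma. Both arguments hinge on the same quadratic term $-\sum_j\psi_{l,j}\beta_{l,j}(I_{1,j}+I_{2,j})I_{l,j}$, available only because \textbf{(A2)} lets one write $S=T-I_1-I_2$; your two-case version is valid (the integrand is Lipschitz along trajectories in the compact set $\mathcal{E}$, so Barbalat applies) and perhaps more transparent, while the paper's single inequality is more economical.

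There is, however, one soft spot in your part (ii). Your appeal to asymptotically autonomous semiflow theory is stated with a false premise: $I_l^*$ is \emph{not} a globally attracting equilibrium of the limit system on the closed cone, because ${\bf 0}$ is also an equilibrium of the patch logistic system. The Markus--Thieme theory only guarantees that the omega-limit set of the nonautonomous trajectory is a chain-recurrent invariant set of the limit semiflow, i.e., $\{{\bf 0}\}$ or $\{I_l^*\}$ here, and you must separately exclude $\{{\bf 0}\}$. This is where a persistence input is needed: since $\mathcal{R}_{0,l}(N)>1$ and you have already shown $I_q(t)\to{\bf 0}$, Lemma \ref{lem2} yields $\limsup_{t\to\infty}\sum_jI_{l,j}(t)\ge\sigma_l^*>0$, ruling out extinction of strain-$l$ along the trajectory. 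The paper's terser ``perturbation argument'' sidesteps the issue differently: for small $\varepsilon>0$ it sandwiches $I_l$ between solutions of the $\pm\varepsilon$-perturbed cooperative logistic systems started from the strictly positive value $I_l(t_\varepsilon)$, which converge to the perturbed positive equilibria, and then lets $\varepsilon\to0$ using continuity of the equilibrium. Your stated worry, the local asymptotic stability of $I_l^*$, is standard for cooperative strictly sublinear systems and is not the obstruction; the missing ingredient is the exclusion of the omega-limit $\{{\bf 0}\}$, and with that one repair (supplied by Lemma \ref{lem2}) your proof is complete.
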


  When hypothesis {\bf (A2)} holds, that is the population's dispersal rate is uniform and independent of the relevant subgroups,  the basic reproduction numbers serve as threshold values for the global dynamics of solutions of \eqref{model-eq1}. In this case, it follows from Theorem \ref{T4} that  any strain whose basic reproduction number is less or equal to one eventually dies out. Note that, when both strain's reproduction numbers are bigger than one,  Theorems \ref{T2}-\ref{T3} provide sufficient conditions for the extinction of at least one strain. So, a natural question is to know whether both strains can coexist, and if so, provide some sufficient conditions on the parameters of the model which guarantee this.   To tackle these questions, it seems appropriate to first introduce the strains' {\it invasion numbers}, which measure the ability for a strain to invade another strain when rare.

   Fix $l\ne p=1,2$ and suppose that $\min\{\mathcal{R}_{0,1}(N),\mathcal{R}_{0,2}(N)\}>1$. It follows from \cite[Theorem 4]{LP2023} that \eqref{model-eq1} has at least one strain-$l$ EE solution. Under some further assumptions on the parameters of the model, \cite[Theorem 4]{LP2023} established the uniqueness of the single-strain EE. When these assumptions do not hold, it is not clear whether single-strain EE are unique. Multiplicity of single-strain EE solutions for the PDE-SIS model \eqref{PDE-model} was recently established in \cite{CS2023b}.  So, for convenience, let $$\mathcal{E}_l:=\{(S,I_1,I_2)\in\mathcal{E}\ :\ I_{p}={\bf 0}\}$$
  and  $\mathcal{E}^*_l $ denote the set of single-strain EE solutions of \eqref{model-eq1} in $\mathcal{E}_l$. It is clear that both $\mathcal{E}_l$ and $\mathcal{E}^*_l$ are compact sets and invariant under the semiflow of solutions of \eqref{model-eq1}. Moreover, we have that ${\bf E}^0\notin \mathcal{E}^*_l$ and $\mathcal{E}^*_1\cap \mathcal{E}^*_2=\emptyset$. Given ${\bf E}_1^*=(S_1^*,I_1^*,{\bf 0})\in \mathcal{E}^*_1$ and ${\bf E}_2^*=(S_1^*,{\bf 0},I_2^*)\in \mathcal{E}^*_1$ define 
  \begin{equation}\label{GG1}
      \tilde{\mathcal{R}}_2({\bf E}_1^*)=\rho\Big({\rm diag}(\beta_2\circ S_1^*)\mathcal{V}_2^{-1}\Big),\quad\quad  \tilde{\mathcal{R}}_1({\bf E}_2^*)=\rho\Big({\rm diag}(\beta_1\circ S_2^*)\mathcal{V}_1^{-1}\Big),
  \end{equation}
  \begin{equation}\label{GG2}
      \tilde{\mathcal{R}}_1(N)=\min\{\tilde{\mathcal{R}}_1({\bf E}^*_2) :\ {\bf E}^*_2\in \mathcal{E}^*_2\}\quad \text{and}\quad \tilde{\mathcal{R}}_2(N)=\min\{\tilde{\mathcal{R}}_2({\bf E}^*_1) :\ {\bf E}^*_1\in \mathcal{E}^*_1\}.
  \end{equation}
  The quantity $\tilde{\mathcal{R}}_l(N)$ is the strain-l's invasion number. Note that when {\bf (A2)} holds, it follows from \cite[Theorem 9]{LP2023} that $\mathcal{E}^*_l$ consists of a single element for each $l=1,2$. For any element $ E\in \mathcal{E}$ and a subset $\mathcal{O}\subset \mathcal{E}$, let 
  $$
{\rm dist}(E,\mathcal{O}):=\inf\{\|E-P\|_{\infty}:\ P\in\mathcal{O}\}
  $$
  denote the distance from the point $E$ to the set $\mathcal{O}$.  Our result on the coexistence EE of \eqref{model-eq1} reads as follows.
   \begin{tm} \label{T5-1} Suppose that 
   $\min\{\mathcal{R}_{0,1}(N),\mathcal{R}_{0,2}(N)\}>1$. Let $\tilde{\mathcal{R}}_{1}(N)$  and $\tilde{\mathcal{R}}_{2}(N)$  be defined as in \eqref{GG2}. 
   \begin{itemize}
       \item[\rm (i)] If $\tilde{\mathcal{R}}_{1}(N)>1$, then 
       \begin{equation}\label{2-GG1}
           \liminf_{t\to\infty}\frac{1}{t}\int_{0}^t{\rm dist}((S(\tau),I_1(\tau),I_2(\tau))(t),\mathcal{E}^*_2)d\tau\ge \frac{\beta_{\min}}{\beta_{\max}}\frac{(\tilde{\mathcal{R}}_1(N)-1)}{\tilde{\mathcal{R}}_1(N)}
       \end{equation}
       for every solution $(S,I_1,I_2)(t)$ of \eqref{model-eq1} with initial data satisfying $\|I_1(0)\|_{\infty}>0$. Furthermore, if in addition $\mathcal{E}_2^*\cup\{{\bf E}^0\}$ is the global attractor for classical solutions of \eqref{model-eq1} with initial data in $\mathcal{E}_2$, then there is a positive  number $\sigma_1^*>0$ such that 
       \begin{equation}\label{2-GG3}
           \liminf_{t\to\infty}\min_{j\in\Omega}I_{1,j}(t)\ge \sigma_1^*,
       \end{equation} 
       for every solution $(S,I_1,I_2)(t)$ of \eqref{model-eq1} with initial data satisfying $\|I_1(0)\|_{\infty}>0$.
       Similar result holds for the strain-2.
       \item[\rm (ii)] Suppose that $\min\{\tilde{\mathcal{R}}_{1}(N),\tilde{\mathcal{R}}_{2}(N)\}>1 $. If $\mathcal{E}_1^*\cup\{{\bf E}^0\}$ and $\mathcal{E}_2^*\cup\{{\bf E}^0\}$   are the global attractor for classical solutions of \eqref{model-eq1} with initial data $\mathcal{E}_1$ and $\mathcal{E}_2$, respectively,  then \eqref{model-eq1} has at least one coexistence EE solution.
   \end{itemize}
  \end{tm}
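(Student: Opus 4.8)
The plan is to prove the two parts in sequence, with the time-averaged repulsion estimate \eqref{2-GG1} serving as the engine that powers both the pointwise persistence \eqref{2-GG3} and the coexistence conclusion.

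First I would record the spectral meaning of the invasion condition. Exactly as $\mathcal{R}_{0,l}(N)-1$ shares its sign with $\lambda_*(\tfrac{N}{k}\mathcal{F}_l-\mathcal{V}_l)$, the inequality $\tilde{\mathcal{R}}_1({\bf E}_2^*)>1$ is equivalent to $\lambda_*(\mathcal{A}_1(S_2^*))>0$, where $\mathcal{A}_1(S_2^*):=d_1\mathcal{L}+{\rm diag}(\beta_1\circ S_2^*-\gamma_1)$ governs the strain-$1$ dynamics linearized about ${\bf E}_2^*=(S_2^*,{\bf 0},I_2^*)$. Because $\mathcal{L}$ is symmetric, irreducible and essentially nonnegative, $\mathcal{A}_1(S_2^*)$ is symmetric and irreducible, so $\lambda_*(\mathcal{A}_1(S_2^*))$ is simple with a strictly positive eigenvector and admits the Rayleigh form $\lambda_*(\mathcal{A}_1(S_2^*))=\max_{\|v\|=1}\big(v^{T}\mathcal{A}_1(S_2^*)v\big)$. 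Minimizing over the compact set $\mathcal{E}^*_2$ and invoking $\tilde{\mathcal{R}}_1(N)>1$, I obtain a uniform gap: there is $c_0>0$ with $\lambda_*(\mathcal{A}_1(S_2^*))\ge c_0$ for every ${\bf E}_2^*\in\mathcal{E}^*_2$. The analogue at the DFE is that $\mathcal{R}_{0,1}(N)>1$ gives $\lambda_*(\tfrac{N}{k}\mathcal{F}_1-\mathcal{V}_1)>0$.

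For the integral inequality \eqref{2-GG1} I would track a weighted logarithmic functional $t\mapsto\langle\omega,\ln I_1(t)\rangle=\sum_j\omega_j\ln I_{1,j}(t)$ along solutions with $\|I_1(0)\|_\infty>0$ (so $I_1(t)\gg{\bf 0}$ for $t>0$ by irreducibility). Differentiating and substituting $\dot I_{1,j}/I_{1,j}=d_1(\mathcal{L}I_1)_j/I_{1,j}+\beta_{1,j}S_j-\gamma_{1,j}$, the migration contribution is controlled by the graph-Laplacian dispersal inequality (for $\omega={\bf 1}$ one has $\sum_j(\mathcal{L}u)_j/u_j=\tfrac12\sum_{i,j}L_{i,j}(u_i-u_j)^2/(u_iu_j)\ge 0$), while the reaction contribution $\langle\omega,\beta_1\circ S-\gamma_1\rangle$, evaluated near $\mathcal{E}^*_2$ where $S\approx S_2^*$, is bounded below through the eigenvector structure of $\mathcal{A}_1(S_2^*)$ by the positive gap of the previous step. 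Integrating on $[0,t]$, dividing by $t$, and using that $I_1$ is bounded on the compact set $\mathcal{E}$ (Theorem \ref{T1}), the left side has a nonpositive limit; hence the time-average of the reaction term is asymptotically nonnegative. Quantifying how far $S$ must sit from $S_2^*$ to render that term nonnegative and converting the deviation into the sup-distance to $\mathcal{E}^*_2$ produces \eqref{2-GG1}: the factor $(\tilde{\mathcal{R}}_1(N)-1)/\tilde{\mathcal{R}}_1(N)=1-\tilde{\mathcal{R}}_1(N)^{-1}$ records the size of the invasion gap, and the ratio $\beta_{\min}/\beta_{\max}$ enters in passing from the $\beta$-weighted reaction deviation to the unweighted distance. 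Extracting this sharp constant is the most delicate computational point.

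The passage to the pointwise persistence \eqref{2-GG3}, and then to coexistence in (ii), I would carry out with the theory of uniform persistence (Hale--Waltman, Smith--Thieme, Magal--Zhao). On the compact invariant set $\mathcal{E}$ with semiflow $\Phi$, split into the persistent set $X_0=\{(S,I_1,I_2)\in\mathcal{E}:\ \|I_1\|_\infty>0\}$ (forward invariant by irreducibility) and the boundary $\mathcal{E}_2=\{I_1={\bf 0}\}$. By hypothesis the global attractor of $\Phi|_{\mathcal{E}_2}$ is $\mathcal{E}^*_2\cup\{{\bf E}^0\}$, which I take as an acyclic covering by the isolated invariant sets $\{{\bf E}^0\}$ and $\mathcal{E}^*_2$; each is a uniform weak repeller for $X_0$ (for $\{{\bf E}^0\}$ because $\mathcal{R}_{0,1}(N)>1$, and for $\mathcal{E}^*_2$ because of the gap $c_0>0$), the repelling property following from linearization and a Butler--McGehee argument, equivalently from \eqref{2-GG1}, that the stable set of each boundary set misses $X_0$. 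This yields uniform persistence of $\|I_1\|_\infty$, and the patchwise bound \eqref{2-GG3} then follows from irreducibility of $\mathcal{L}$ via the cooperative lower control $\tfrac{d}{dt}I_1\ge(d_1\mathcal{L}-{\rm diag}(\gamma_1))I_1$. For (ii), with $\tilde{\mathcal{R}}_1(N),\tilde{\mathcal{R}}_2(N)>1$ and both boundary-attractor hypotheses, the same argument applied to each strain makes $\Phi$ uniformly persistent with respect to $\{\|I_1\|_\infty>0,\ \|I_2\|_\infty>0\}$, whose boundary attractor is $\mathcal{E}^*_1\cup\mathcal{E}^*_2\cup\{{\bf E}^0\}$; since $\Phi$ is dissipative with a global attractor on the compact set $\mathcal{E}$, uniform persistence forces an equilibrium in the interior region (Magal--Zhao), i.e. a coexistence EE of \eqref{model-eq1}. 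The main obstacles are therefore twofold: the sharp constant in \eqref{2-GG1}, and the verification of acyclicity and isolatedness of the boundary invariant sets, which is genuinely subtle because the single-strain EE sets $\mathcal{E}^*_l$ need not be singletons (precisely why the global-attractor hypotheses are imposed), so that ruling out heteroclinic cycles among $\{{\bf E}^0\}$, $\mathcal{E}^*_1$ and $\mathcal{E}^*_2$ is what makes the abstract machinery applicable.
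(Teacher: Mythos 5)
Your treatment of the persistence and coexistence parts is essentially sound, but the engine of the whole theorem --- the quantitative estimate \eqref{2-GG1} with its \emph{specific} constant --- is where your proposal has a genuine gap. Your weighted-logarithmic plan is caught between two incompatible choices of weight. The dispersal inequality you quote, $\sum_j(\mathcal{L}u)_j/u_j=\tfrac12\sum_{i,j}L_{i,j}(u_i-u_j)^2/(u_iu_j)\ge 0$, holds only for $\omega={\bf 1}$; but with $\omega={\bf 1}$ the reaction term near $\mathcal{E}_2^*$ is $\sum_j(\beta_{1,j}S^*_{2,j}-\gamma_{1,j})$, and $\tilde{\mathcal{R}}_1({\bf E}_2^*)>1$, i.e.\ $\lambda_*\big(d_1\mathcal{L}+{\rm diag}(\beta_1\circ S_2^*-\gamma_1)\big)>0$, does \emph{not} imply that this unweighted sum is positive: the Rayleigh quotient at ${\bf 1}/\sqrt{k}$ gives only a lower bound for $\lambda_*$, and the principal eigenvector can concentrate on a few favorable patches while the sum is negative. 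If instead you weight by the eigenvector (say $\omega_j=(P_j^*)^2$) to control the reaction term, the migration contribution $\sum_j\omega_j(\mathcal{L}I_1)_j/I_{1,j}$ loses its sign and requires a discrete Picone-type inequality that you never state; and even granting it, you land on a bound of the form $\lambda_*/\beta_{\max}$, and converting that spectral gap into the precise ratio $\frac{\beta_{\min}}{\beta_{\max}}\frac{\tilde{\mathcal{R}}_1(N)-1}{\tilde{\mathcal{R}}_1(N)}$ is exactly the step you defer as ``the most delicate computational point'' --- but that constant \emph{is} the statement. The paper sidesteps all of this with a \emph{linear} functional: taking the positive eigenvector ${\bf P}^*$ from \eqref{GG1} and rewriting the eigenrelation as $0=d_1\mathcal{L}{\bf P}^*+\big(\tilde{\mathcal{R}}_1({\bf E}_2^*)^{-1}\beta_1\circ S_2^*-\gamma_1\big)\circ{\bf P}^*$, symmetry of $\mathcal{L}$ transfers the dispersal onto ${\bf P}^*$ and yields the exact identity
\begin{equation*}
\frac{d}{dt}\sum_{j\in\Omega}P_j^*I_{1,j}(t)=\sum_{j\in\Omega}\beta_{1,j}\Big(S_j(t)-\frac{1}{\tilde{\mathcal{R}}_1({\bf E}_2^*)}S^*_{2,j}\Big)P_j^*I_{1,j}(t),
\end{equation*}
after which splitting off the term $\frac{\tilde{\mathcal{R}}_1-1}{\tilde{\mathcal{R}}_1}\beta_1\circ S_2^*$, a Gronwall integration, and boundedness of $\sum_jP_j^*I_{1,j}$ on $\mathcal{E}$ deliver \eqref{2-GG1} with the stated constant in a few lines --- no logarithm, no Picone inequality, no quantified spectral gap over $\mathcal{E}_2^*$.

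For \eqref{2-GG3} and part (ii) your sketch is correct in outline and is the paper's machinery in different clothing. The paper verifies uniform \emph{weak} persistence of $\|I_1\|_\infty$ by contradiction: a sequence of solutions with $\sup_t\|I_1^n(t)\|_\infty\le 1/n$, the uniform bound \eqref{T1-e1} forcing $I_2^n$ bounded below eventually, Arzel\`a--Ascoli producing an entire limit solution with $I_1\equiv{\bf 0}$, and the global-attractor hypothesis together with \cite[Theorem 5.7]{ST2011} forcing this limit into $\mathcal{E}_2^*$, contradicting the repulsion coming from \eqref{2-GG1} and Lemma \ref{lem1}; it then upgrades weak to strong persistence via \cite[Theorem 5.2]{ST2011} and passes to the patchwise bound via the Harnack inequality \eqref{DP1}, which plays exactly the role of your cooperative lower control. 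Your Hale--Waltman acyclic-covering formulation, with $\{{\bf E}^0\}$ and $\mathcal{E}_2^*$ as isolated weak repellers, is an acceptable equivalent, and you correctly identify that the global-attractor hypotheses are what make isolatedness and acyclicity checkable when $\mathcal{E}_l^*$ is not a singleton. For (ii), the paper applies \cite[Theorem 6.2]{ST2011} to the continuous \emph{concave} functional $\tilde{\xi}(S,I_1,I_2)=\min_{j\in\Omega}\min_{l=1,2}I_{l,j}$ on the convex compact set $\mathcal{E}$; your Magal--Zhao citation accomplishes the same end. So the verdict: the persistence and coexistence arguments would survive with routine completion, but part (i) as proposed does not prove \eqref{2-GG1}, and the missing idea is precisely the paper's eigenvector-weighted linear functional.
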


 Suppose that $\mathcal{R}_{0,l}(N)>1$ for each $l=1,2$ and hypothesis {\bf (A2)} holds. Let  $d>0$ be the uniform dispersal rate as in {\bf (A2)}.  It follows from \cite[Theorem 3]{LP2023} that \eqref{model-eq1} has a unique single-strain EE solutions   ${\bf E}_1^*:=\Big(\frac{N}{k}{\bf 1}-I_1^*,I_1^*,{\bf 0}\Big)$ and  ${\bf E}_2^*:=\Big(\frac{N}{k}{\bf 1}-I_2^*,{\bf 0},I_2^*\Big)$, where $I_1^*$ is the unique positive solution of the multiple-patch logistic equation
 \begin{equation}\label{I-star-eq}
     0=d\mathcal{L}I_1^*+\Big(\beta_l\circ\Big(\frac{N}{k}{\bf 1}-I_1^*\Big)-\gamma_l\Big)\circ I_l^*.
 \end{equation}
Furthermore, it holds that
\begin{equation}\label{Invasion-number}
    \tilde{\mathcal{R}}_l(N)=\rho\Big(\Big(\frac{N}{k}\mathcal{F}_l-{\rm diag}(\beta_l\circ I_p^*)\Big)\mathcal{V}_l^{-1}\Big)\quad p\ne l\in\{1,2\}.
\end{equation}
As an application of Theorem \ref{T5-1}, we can state the following result.

\begin{tm}\label{T5}  Suppose that hypothesis {\bf (A2)} holds and $\min\{\mathcal{R}_{0,1}(N),{\mathcal{R}}_{0,2}(N)\}>1$. Let ${\bf E}_1^*$ and ${\bf E}_2^*$ denote the single-strain EE solutions of \eqref{model-eq1}. Fix $l\ne p\in\{1,2\}$.

\begin{itemize}
    \item[\rm (i)] If $\tilde{\mathcal{R}}_{l}(N)>1$, the single-strain EE, ${\bf E}_p^*$, is unstable and strain-l is uniformly persistent in the sense of \eqref{2-GG3}.
    
    \item[\rm (ii)] If $\min\{\tilde{\mathcal{R}}_{1}(N),\tilde{\mathcal{R}}_{2}(N)\}>1 $, then \eqref{model-eq1} has at least one coexistence EE solution.
    \item[\rm (iii)] If $\Sigma_1\cap H_1^+$ and $\Sigma_2\cap H^+_2$ are both nonempty sets, then there is $d^*>0$ such that for any $0<d<d^*$,  $\min\{\tilde{\mathcal{R}}_{1}(N),\tilde{\mathcal{R}}_{2}(N)\}>1 $. 
\end{itemize}
    
\end{tm}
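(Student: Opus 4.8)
The plan is to derive all three parts from Theorem \ref{T5-1} by exploiting the structure that \textbf{(A2)} forces on the single-strain subsystems. The common first step is to verify, under \textbf{(A2)}, the standing hypothesis of Theorem \ref{T5-1}: that $\mathcal{E}_p^*\cup\{{\bf E}^0\}$ is the global attractor for solutions with initial data in $\mathcal{E}_p$. On the invariant set $\mathcal{E}_p$ only strain-$p$ is present, and adding the $S$- and $I_p$-equations shows that $T:=S+I_p$ obeys $\frac{d}{dt}T=d\mathcal{L}T$ exactly; since $\lambda_*(\mathcal{L})=0$ is simple with eigenvector ${\bf 1}$ and $\sum_j T_j\equiv N$, we get $T(t)\to\frac{N}{k}{\bf 1}$ exponentially. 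Thus every $\omega$-limit set lies on the invariant manifold $\{S=\frac{N}{k}{\bf 1}-I_p\}$, on which $I_p$ satisfies the cooperative, sublinear logistic system
\begin{equation*}
\tfrac{d}{dt}I_p=d\mathcal{L}I_p+\Big(\beta_p\circ\big(\tfrac{N}{k}{\bf 1}-I_p\big)-\gamma_p\Big)\circ I_p,
\end{equation*}
whose unique positive equilibrium $I_p^*$ from \eqref{I-star-eq} is globally asymptotically stable among nonzero data (standard monotone/sublinear theory), with the zero state attracting $I_p(0)={\bf 0}$. Hence the attractor on $\mathcal{E}_p$ is exactly $\{{\bf E}_p^*\}\cup\{{\bf E}^0\}=\mathcal{E}_p^*\cup\{{\bf E}^0\}$, the set $\mathcal{E}_p^*$ being a singleton by \cite[Theorem 9]{LP2023}.

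Granting this, parts (i) and (ii) follow at once. For (i), when $\tilde{\mathcal{R}}_l(N)>1$ the instability of ${\bf E}_p^*$ is read off the linearization in the $I_l$-direction, governed by $\lambda_*\big({\rm diag}(\beta_l\circ S_p^*)-\mathcal{V}_l\big)$, which shares the sign of $\tilde{\mathcal{R}}_l(N)-1=\rho\big({\rm diag}(\beta_l\circ S_p^*)\mathcal{V}_l^{-1}\big)-1$ by the same-sign correspondence of \cite{Allen2007}, hence is positive; the uniform persistence \eqref{2-GG3} is then the conclusion of Theorem \ref{T5-1}(i) with the just-verified hypothesis. Part (ii) is Theorem \ref{T5-1}(ii) under the same hypothesis for both $l=1,2$.

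The substance is part (iii), an asymptotic analysis of $\tilde{\mathcal{R}}_l(N)$ as $d\to0^+$. First I would record the vanishing-dispersal profile of the single-strain EE: as $d\to0^+$, the positive solution $I_p^*(d)$ of \eqref{I-star-eq} converges patchwise to the stable branch $\hat I_{p,j}=\max\{0,\frac{N}{k}-\frac{\gamma_{p,j}}{\beta_{p,j}}\}$, so that $\hat S_{p,j}:=\frac{N}{k}-\hat I_{p,j}$ equals $\mathfrak{R}_{p,j}^{-1}$ on $H_p^+$ and $\frac{N}{k}$ off $H_p^+$. Since $\Sigma_l\cap H_l^+\neq\emptyset$ forces $\mathcal{R}_{0,l}(N)\to\frac{N}{k}\max_j\mathfrak{R}_{l,j}>1$, the single-strain EE and hence $\tilde{\mathcal{R}}_l(N)$ are well-defined for small $d$. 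Combining the profile with $\mathcal{V}_l={\rm diag}(\gamma_l)-d\mathcal{L}\to{\rm diag}(\gamma_l)$ and continuity of the spectral radius in \eqref{Invasion-number} gives
\begin{equation*}
\lim_{d\to0^+}\tilde{\mathcal{R}}_l(N)=\rho\Big({\rm diag}(\beta_l\circ\hat S_p)\,{\rm diag}(\gamma_l)^{-1}\Big)=\max_{j\in\Omega}\mathfrak{R}_{l,j}\hat S_{p,j}.
\end{equation*}
Fixing $j_0\in\Sigma_l\cap H_l^+$: if $j_0\in H_p^+$ then $\mathfrak{R}_{l,j_0}\hat S_{p,j_0}=\mathfrak{R}_{l,j_0}/\mathfrak{R}_{p,j_0}>1$ since $j_0\in\Sigma_l$, while otherwise $\hat S_{p,j_0}=\frac{N}{k}$ and $\mathfrak{R}_{l,j_0}\hat S_{p,j_0}=\mathfrak{R}_{l,j_0}(N)>1$ since $j_0\in H_l^+$. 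Either way $\lim_{d\to0^+}\tilde{\mathcal{R}}_l(N)>1$, so $\tilde{\mathcal{R}}_l(N)>1$ for small $d$; applying this to $l=1$ and $l=2$ yields $\min\{\tilde{\mathcal{R}}_1(N),\tilde{\mathcal{R}}_2(N)\}>1$ for $0<d<d^*$.

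The hard part will be the profile convergence $I_p^*(d)\to\hat I_p$ underlying part (iii): one must justify that the finite-dimensional fixed points select the positive branch on $H_p^+$ rather than collapsing to zero. I would handle this by compactness—extract a subsequential limit of $I_p^*(d)$, observe it solves the decoupled $d=0$ system, and exclude the trivial branch on favorable patches through a uniform-in-$d$ lower bound (a patchwise subsolution comparison, or the fact that $\rho(\mathcal{F}_p\mathcal{V}_p^{-1})$ stays bounded below), then use uniqueness of the positive EE to pin down the full limit. This is precisely the type of $d\to0$ asymptotics developed in \cite{LP2023}, which I would invoke.
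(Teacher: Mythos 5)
Your proposal is correct and follows the same skeleton as the paper's proof: parts (i)--(ii) are reduced to Theorem \ref{T5-1} once the boundary sets $\mathcal{E}_p$ are shown to have $\mathcal{E}_p^*\cup\{{\bf E}^0\}$ as global attractor, and part (iii) is obtained by passing to the limit $d\to 0^+$ in the invasion numbers, arriving at exactly the paper's limit value $\tilde{\mathcal{R}}_l^*(N)=\max_{j\in\Omega}\mathfrak{R}_{l,j}\min\{\frac{N}{k},r_{p,j}\}$ and the same patchwise verification on $\Sigma_l\cap H_l^+$ (your two-case analysis is equivalent to the paper's observation that $\frac{N}{k}\mathfrak{R}_{l,j}>1$ and $\mathfrak{R}_{l,j}r_{p,j}>1$ simultaneously). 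The differences are in how the intermediate facts are discharged: the paper simply cites \cite[Theorem 3]{LP2023} for global stability of the single-strain EE on $\mathcal{E}_1$, $\mathcal{E}_2$, and cites \cite[Theorem 9-b]{LP2023} together with \cite[Theorem 2.7]{CSSY2020} for the profile $I_p^*\to(\frac{N}{k}{\bf 1}-r_p)_+$ and the convergence of $\mathcal{R}_{0,l}(N)$ and $\tilde{\mathcal{R}}_l(N)$, whereas you re-derive the attractor property in-house from the conservation law $\frac{d}{dt}(S+I_p)=d\mathcal{L}(S+I_p)$ forced by {\bf (A2)} plus monotone/sublinear logistic theory, and you sketch a compactness-plus-uniqueness argument for the $d\to0$ profile before deferring to \cite{LP2023}. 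Your version is more self-contained and, notably, supplies an explicit linearization argument for the instability of ${\bf E}_p^*$ in (i) (via the sign correspondence between $\lambda_*({\rm diag}(\beta_l\circ S_p^*)-\mathcal{V}_l)$ and $\tilde{\mathcal{R}}_l(N)-1$), a point the paper leaves implicit when it says (i)--(ii) ``follow from Theorem \ref{T5-1}.'' Two small technical points you should make explicit if writing this up: the passage from $S+I_p\to\frac{N}{k}{\bf 1}$ to convergence of the full flow on $\mathcal{E}_p$ requires the theory of asymptotically autonomous semiflows (or a chain-transitivity argument), and ruling out the $\omega$-limit set $\{{\bf E}^0\}$ for $I_p(0)\neq{\bf 0}$ uses that ${\bf E}^0$ is a uniform weak repeller when $\mathcal{R}_{0,p}(N)>1$ (as in Lemmas \ref{lem1}--\ref{lem2}); in finite dimensions the continuity of the spectral radius you invoke for the limits of $\mathcal{R}_{0,l}(N)$ and $\tilde{\mathcal{R}}_l(N)$ is indeed elementary, so the citation of \cite{CSSY2020} can be avoided as you suggest.
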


When hypothesis {\bf (A2)} holds, $\Sigma_1\cap H_1^{+}\ne \emptyset$ and $\Sigma_2\cap H_2^+\ne \emptyset$, Theorem \ref{T5} establishes the existence of a coexistence EE for small dispersal rate of the population. We expect that the same conclusion should hold when hypothesis {\bf (A2)} is dropped. The questions of uniqueness and  global stability of coexistence EE will be explored in our future work. Another interesting question is to investigate how the spatial distribution of the population at coexistence EE solutions depend on the dispersal rates. This question will also be investigated in our future work.

\subsection{Numerical Simulations and Discussion}

To further understand the dynamics of solutions of \eqref{model-eq1}, we perform some simulations. Some of the simulations illustrate our theoretical results, while others explore some aspects that our theoretical results did not cover. For all the simulations, we consider two patches models, that is $k=2$, $\Omega=\{1,2\}$ and take $L_{1,1}=L_{2,2}=0$ and $L_{1,2}=L_{2,1}=1$.

\medskip

\begin{Sim}\label{Simulation1}
We  fix the parameters: $\beta_{1}=(2,3)^T$, $\beta_2=(1,4)^T$, $\gamma_1=(1,2)^T$, $\gamma_{2}=(2,3)^T$, $d_S=3$, $d_1=1$, and $d_2=2$. Next, we perform three simulations with three choices of the total population $N$. {\rm (a)} In Fig \ref{fig1-3}, we take $S(0)=(0.05,0.05)^T$, $I_{1}(0)=(0.05,0.05)^T$, and $I_2(0)=(0.05,0.05)^T$.   Hence, $N=0.3$,  $\mathcal{R}_{0,1}(N)=0.1627$, $\mathcal{R}_{0,2}(N)=0.2535$, and $\mathcal{R}_0(N)<1$. The simulations indicate an extinction of the disease.   {\rm (b)} In Fig \ref{fig1-2}, we take $S(0)=(0.25,0.25)^T$, $I_{1}(0)=(0.25,0.25)^T$, and $I_2(0)=(0.25,0.25)^T$.   Hence, $N=1.5$,  $\mathcal{R}_{0,1}(N)=1.2674$, $\mathcal{R}_{0,2}(N)=0.81375$ and $\mathcal{R}_0(N)=\mathcal{R}_{0,1}(N)>1>\mathcal{R}_{0,2}(N)$. We also observe an exclusion of strain-2 while strain-1 persists. {\rm (c)} In Fig\ref{fig1-1}, we take $S(0)=(1,2)$, $I_{1}(0)=(0.5,0.5)$, and $I_2(0)=(0.5,0.5)$. Hence, $N=4$,  $\mathcal{R}_{0,1}(N)=2.7125$, $\mathcal{R}_{0,2}(N)=4.2247$ and $\mathcal{R}_0(N)=\mathcal{R}_{0,1}(N)>1$. In this case, we observe from Fig \ref{fig1-1} that strain-1 persists while strain-2 eventually dies out. This agrees with persistence of at least one strain of the disease as predicted by Theorem \ref{T1}-{\rm (ii-3)}. However, we notice an exclusion of strain-2 even though its basic reproduction number is bigger than one. 

\medskip

\begin{figure}[ht]
     \centering

         \begin{subfigure}[b]{0.3\textwidth}
         \centering
         \includegraphics[width=\textwidth]{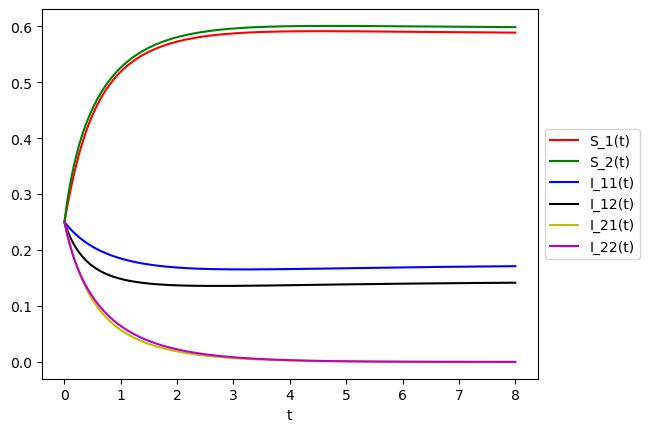}
         \caption{}
         \label{fig1-3}
     \end{subfigure}
      \hfill
     \begin{subfigure}[b]{0.3\textwidth}
         \centering
         \includegraphics[width=\textwidth]{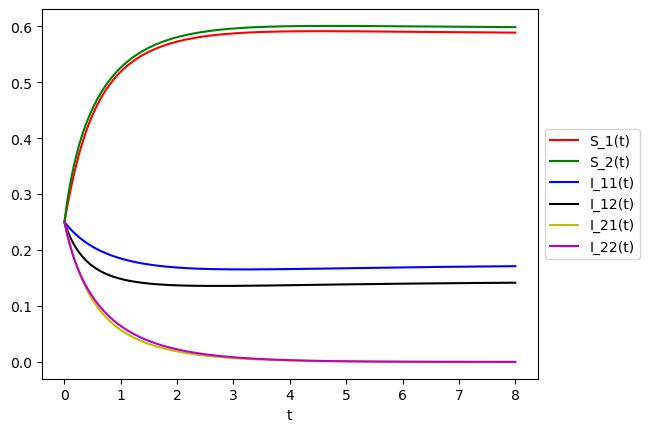}
         \caption{}
         \label{fig1-2}
     \end{subfigure} 
      \hfill
     \begin{subfigure}[b]{0.3\textwidth}
         \centering
         \includegraphics[width=\textwidth]{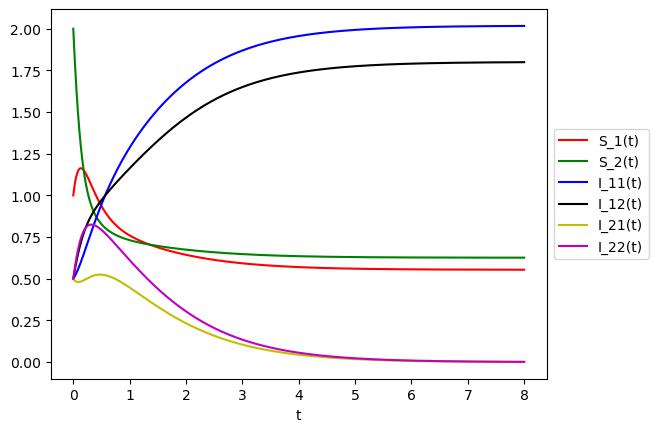}
         \caption{}
         \label{fig1-1}
     \end{subfigure}

        \caption{Numerical simulations illustrating disease extinction when $\mathcal{R}_0(N)<1$, and competition-exclusion of the strains when $\mathcal{R}_0(N)>1$.}
        \label{fig1}
\end{figure}
    
\end{Sim}

\begin{Sim}\label{Simulation2} We fix the parameters: $\beta_1=(4,6)^T$, $\beta_{2}=(1,4)^T$, $\gamma_1=(2,3)^T$, and $\gamma_2=(2,3)^T$. So that $\mathfrak{R}_1=(2,2)^T$ is constant and $\|\mathfrak{R}_2\|_{\infty}<2$.  We also fix the initial data to $S_0=(1,2)$, $I_{1}=(0.5,0.5)$ and $I_{2}=(0.5,0.5)$. We then perform three simulations for different choices of the migration rates: (a) $d_S=3$, $d_1=1$ and $d_2=2$ in Figure \ref{fig2-1}; (b) $d_S=4$, $d_1=6$ and $d_2=2$ in Figure \ref{fig2-2}; and $d_S=10$, $d_1=0.5$ and $d_2=20$ in Figure \ref{fig2-3}. We observe that strain-2 eventually dies out in all the three simulations irrespective of the choices of the migration rates. These agree with the conclusion of Theorem \ref{T2}-{\rm (i)}.

\begin{figure}[h]
     \centering
     \begin{subfigure}[b]{0.3\textwidth}
         \centering
         \includegraphics[width=\textwidth]{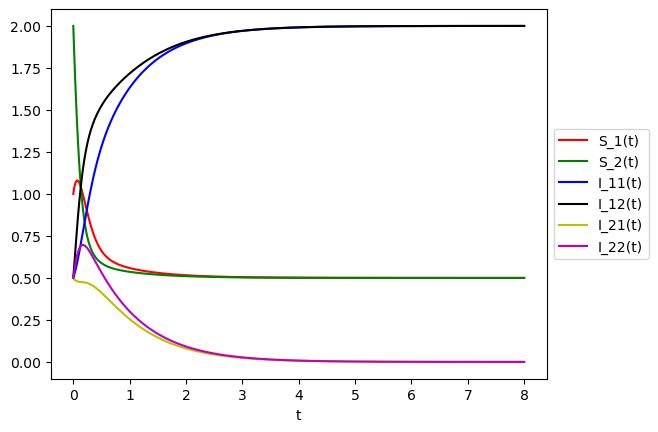}
         \caption{}
         \label{fig2-1}
     \end{subfigure}
     \hfill
     \begin{subfigure}[b]{0.3\textwidth}
         \centering
         \includegraphics[width=\textwidth]{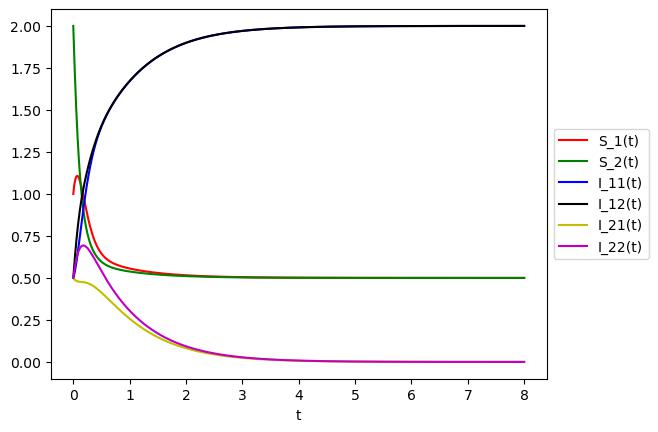}
         \caption{}
         \label{fig2-2}
     \end{subfigure}
     \hfill
     \begin{subfigure}[b]{0.3\textwidth}
         \centering
         \includegraphics[width=\textwidth]{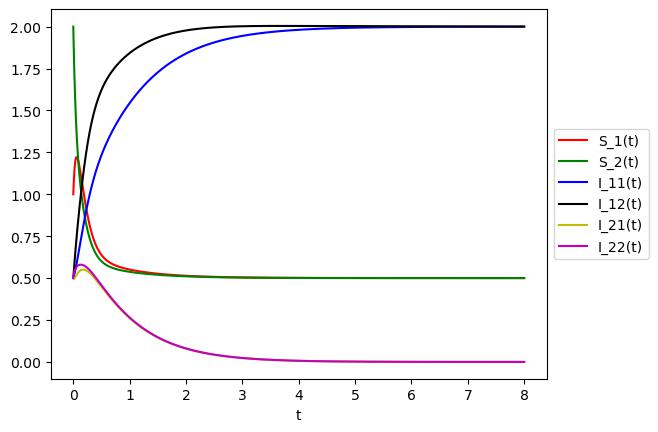}
         \caption{}
         \label{fig2-3}
     \end{subfigure}
        \caption{Numerical simulations illustrating extinction of strain-2 when strain-1 local reproductive function is constant and locally strictly maximizes the local reproductive functions on all patches.}
        \label{fig2}
\end{figure}
    
\end{Sim}

    \begin{Sim}\label{Simulation3}  We fix the parameters: $\beta_1=(\frac{2}{3},1)^T$, $\beta_{2}=(1,4)^T$, $\gamma_1=(2,3)^T$, and $\gamma_2=(2,3)^T$. So that $\mathfrak{R}_1=(\frac{1}{3},\frac{1}{3})^T$ is constant and $\mathfrak{R}_{2,\min}>\frac{1}{3}$.  We also fix the initial data to $S_0=(1,2)$, $I_{1}=(1,1)$ and $I_{2}=(1,1)$. We then perform three simulations for different choices of the migration rates: (a) $d_S=5$, $d_1=1$ and $d_2=2$ in Figure \ref{fig3-1}; (b) $d_S=0.005$, $d_1=0.5$ and $d_2=2$ in Figure \ref{fig3-2}; and $d_S=10$, $d_1=0.005$ and $d_2=2$ in Figure \ref{fig3-3}. We observe that strain-1 eventually dies out in all the three simulations irrespective of the choices of the migration rates. These agree with the conclusion of Theorem \ref{T2}-{\rm (ii)}.

\medskip
  
\begin{figure}[h!]
     \centering
     \begin{subfigure}[b]{0.3\textwidth}
         \centering
         \includegraphics[width=\textwidth]{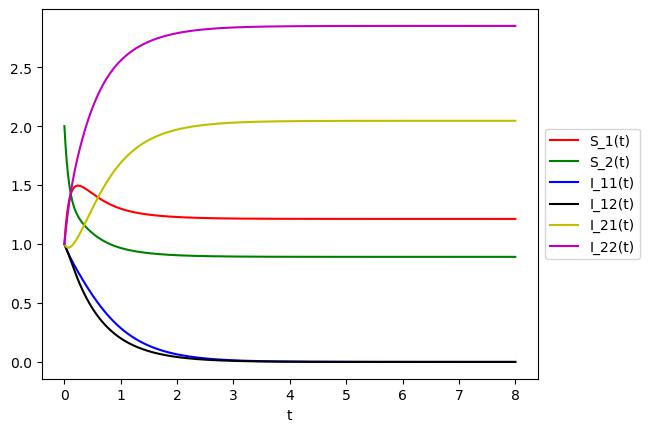}
         \caption{}
         \label{fig3-1}
     \end{subfigure}
     \hfill
     \begin{subfigure}[b]{0.3\textwidth}
         \centering
         \includegraphics[width=\textwidth]{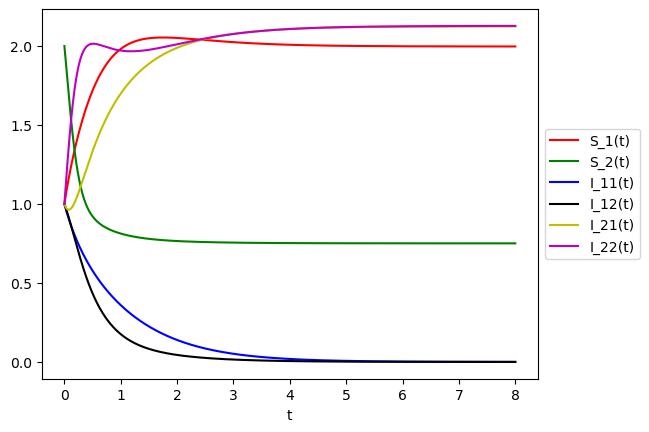}
         \caption{}
         \label{fig3-2}
     \end{subfigure}
     \hfill
     \begin{subfigure}[b]{0.3\textwidth}
         \centering
         \includegraphics[width=\textwidth]{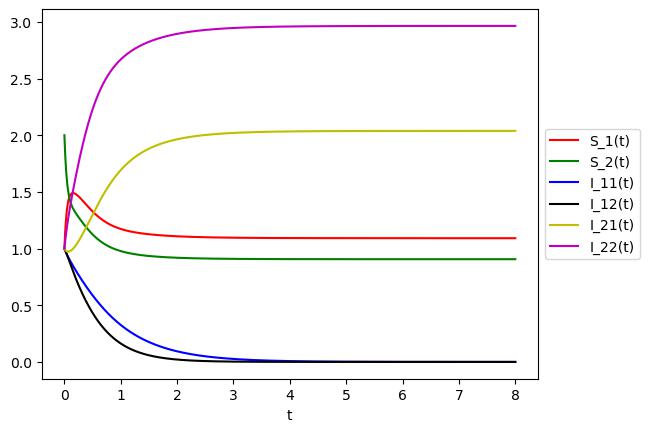}
         \caption{}
         \label{fig3-3}
     \end{subfigure}
        \caption{Numerical simulations illustrating extinction of strain-1 when its local reproductive function is constant and locally strictly minimizes the local reproductive functions on all patches.}
        \label{fig3}
\end{figure}
    
\end{Sim}

\begin{Sim}\label{Simlation4}  We fix the parameters: $\beta_1=(2,3)^T$, $\beta_{2}=(1,4)^T$, $\gamma_1=(2,3)^T$, and $\gamma_2=(2,3)^T$. So that $\mathfrak{R}_1=(1,1)^T$ is constant and $\mathfrak{R}_{2,\min}<1<\mathfrak{R}_{2,\max}$. Next, we fix the diffusion rate $d_2=2$ of the infected population with strain-2.   Finally, we fix the initial data to $S_0=(1,2)$, $I_{1}=(2,1)$ and $I_{2}=(4,1)$. We then perform three simulations for different choices of the migration rates $d_S$ and $d_1$:  $d_S=5$ and $d_1=1$ in Figure \ref{fig4-1};  $d_S=35$ and $d_1=35$  in Figure \ref{fig4-2}; and $d_S=40$ and $d_1=40$ in Figure \ref{fig4-3}. We notice that both strains coexist when the migration rates are small as illustrated by Figure \ref{fig4-1}, while strain-1 dies out for large diffusion rates as seen from Figures \ref{fig4-2} and \ref{fig4-3}. These simulations complement Theorem \ref{T2} and suggest that the dynamics of the disease delicately depends on the choices of the migration rates when one of the strain local reproductive function is constant and neither maximizes nor minimizes the local reproductive functions.

\medskip

\begin{figure}[h!]
     \centering
     \begin{subfigure}[b]{0.3\textwidth}
         \centering
         \includegraphics[width=\textwidth]{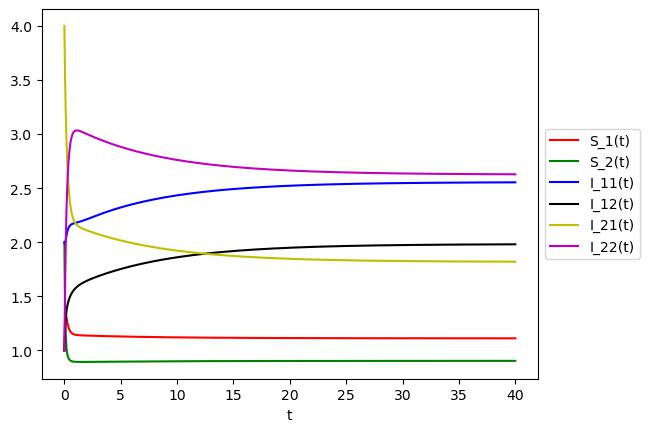}
         \caption{}
         \label{fig4-1}
     \end{subfigure}
     \hfill
     \begin{subfigure}[b]{0.3\textwidth}
         \centering
         \includegraphics[width=\textwidth]{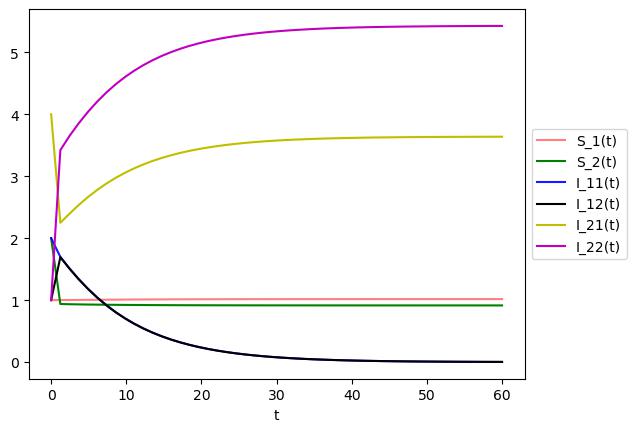}
         \caption{}
         \label{fig4-2}
     \end{subfigure}
     \hfill
     \begin{subfigure}[b]{0.3\textwidth}
         \centering
         \includegraphics[width=\textwidth]{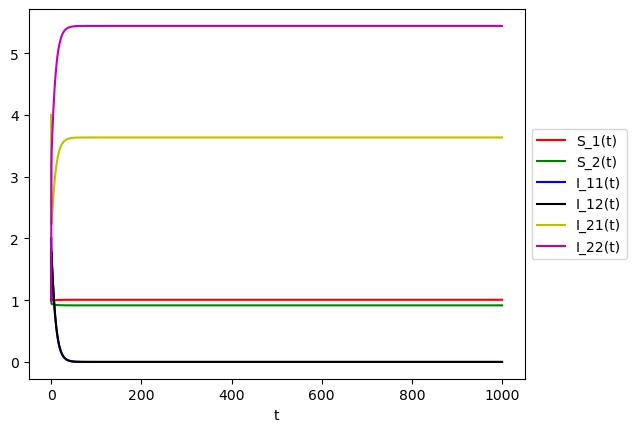}
         \caption{}
         \label{fig4-3}
     \end{subfigure}
        \caption{Numerical simulations illustrating both competition-exclusion and coexistence of the two strains when strain-1 local reproductive function is constant and  locally strictly maximizes the local reproductive functions on exactly one patch.}
        \label{fig4}
\end{figure}
    
\end{Sim}

\begin{Sim}\label{Simulation5}

We fix the parameters: $\beta_1=(2,3)^T$, $\beta_{2}=(1,4)^T$, $\gamma_1=(2,3)^T$, and $\gamma_2=(2,3)^T$. So that $\mathfrak{R}_1=(1,1)^T$ is constant and $\mathfrak{R}_{2,\min}<1<\mathfrak{R}_{2,\max}$.   We also fix the initial data to $S_0=(1,2)$, $I_{1}=(2,1)$ and $I_{2}=(4,1)$, so that $N=11$. Hence, $\Sigma_1\cap H_1^+=\{1\}$ and $\Sigma_2\cap H^+=\{2\}$  We then perform three simulations for  different choices of equal migration rates $d:=d_S=d_1=d_2$: (a) $d=0.005$ in Figure \ref{fig5-1}; (b) $d= 35 $  in Figure \ref{fig4-2}; and $d=40$ in Figure \ref{fig5-3}. All the three simulations illustrate the coexistence of the strain. Hence, support the conclusions of Theorem \ref{T5}.  When the diffusion rates are sufficiently small, Fig \ref{fig5-1} indicates a spatial segregation of the infected populations. However, for large diffusion rates, Figures \ref{fig5-2} and \ref{fig5-3} show that both strains persists uniformly on all patches.

\medskip

\begin{figure}[h!]
     \centering
     \begin{subfigure}[b]{0.3\textwidth}
         \centering
         \includegraphics[width=\textwidth]{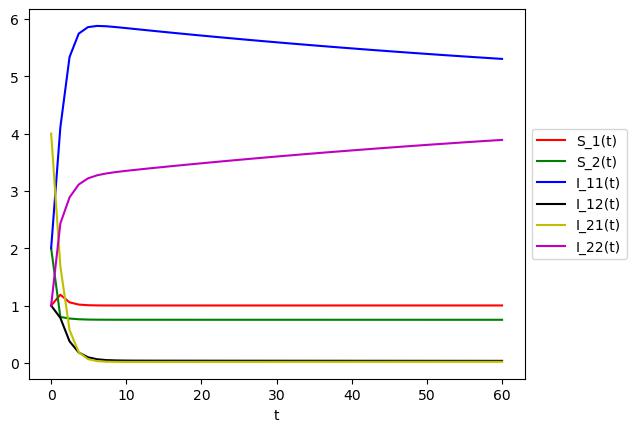}
         \caption{}
         \label{fig5-1}
     \end{subfigure}
     \hfill
     \begin{subfigure}[b]{0.3\textwidth}
         \centering
         \includegraphics[width=\textwidth]{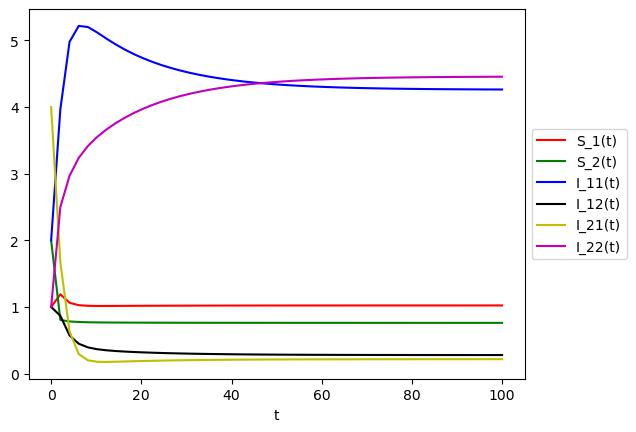}
         \caption{}
         \label{fig5-2}
     \end{subfigure}
     \hfill
     \begin{subfigure}[b]{0.3\textwidth}
         \centering
         \includegraphics[width=\textwidth]{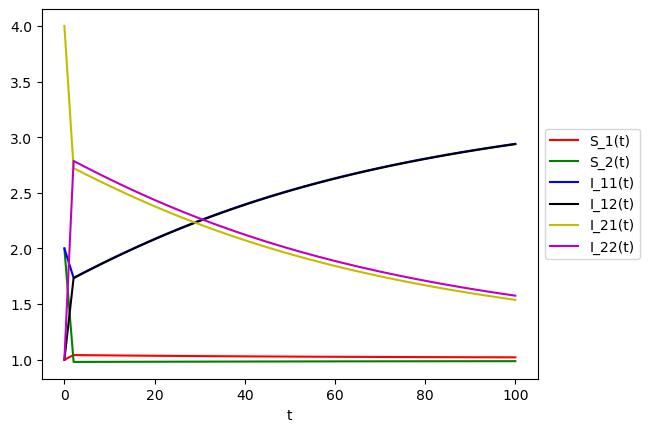}
         \caption{}
         \label{fig5-3}
     \end{subfigure}
        \caption{Numerical simulations illustrating coexistence both strains for equal dispersal rates of all the subgroup of the population.}
        \label{fig5}
\end{figure}
    
\end{Sim}

\subsubsection{Discussion}
We examined the dynamics of solutions of a two-strain epidemic model in patchy environment. To this end, we introduced the basic reproduction number $\mathcal{R}_0(N)$, and then first discussed the extinction of the disease under some sufficient conditions on the parameters of the models. In particular, Theorem \ref{T1} suggests that the disease would be eventually eradicated if either the basic reproduction number is smaller than the reciprocal of the total number of patches, or the migration rate of the susceptible population is sufficiently large and $\mathcal{R}_0(N)<1$. However, at least one strain of the disease would persist if $\mathcal{R}_0(N)>1$.  Observing that the latter assertion does not exclude the possibility of extinction of at least one strain, we then investigated sufficient conditions which lead to the competition-exclusion principle. Simulations in Figure \ref{fig1} provide an illustration of the dynamics of the disease as suggested by Theorem \ref{T1}.

As noted in the introduction, in the celebrated work \cite{BT1989},  Bremermann and Thieme showed that, when diffusion rates are neglected and there is only one patch, any strain that does not maximize the reproduction number would eventually die out. In Theorems \ref{T2} and \ref{T3}, we showed that this conclusion extends to the multiple patches model with diffusion rates if the local reproduction function is patch independent. In fact, the conclusion of Theorem \ref{T2} is somehow stronger as it only requires that one strain to have its local reproductive function to be constant and  to either maximize or minimize the local reproductive function on all patches. These results were perfectly illustrated by our simulations \ref{fig2}, \ref{fig3}, and \ref{fig4}. Interestingly, our simulations in Figure \ref{fig4} indicate that the conclusions of Theorem \ref{T2} might not hold if some of the assumptions are dropped. 

We further investigated some sufficient conditions on the parameters of the model which may lead to the coexistence of the strains. In this direction, we introduced the invasion numbers and then established the existence of coexistence EE when these numbers are bigger than one in Theorems \ref{T5-1} and \ref{T5}. Our numerical simulations in Figure \ref{fig5} support these theoretical results. An important question left open by our work is the uniqueness and global stability of the coexistence EE solution. Also, it would be interesting to study the effect of the diffusion rates on the spatial distributions of the coexistence EE as such information might help to developing and implementing adequate and effective disease control strategies. Another aspect that is not considered by our work is the effect of a total lockdown of one part of the population (say for example the movements of the infected populations  are completely restricted) on the dynamics of the disease.   We plan to devote some of our future studies on these open problems.

\section{Preliminaries}\label{Sec3}

We introduce some notations and collect a few  preliminary results in the current section. For convenience, set
  $$\sigma_{\max}=\max_{j\in\Omega, l=1,2}\sigma_{l,j}\quad \text{and}\quad \sigma_{\min}=\min_{j\in\Omega, l=1,2}\sigma_{l,j}\quad \forall\ \sigma\in\{\beta,\gamma\},
$$
\begin{equation*}
    r_{l,j}=\frac{\gamma_{l,j}}{\beta_{l,j}} \quad  l=1,2\ \text{and}\  j\in\Omega,\quad r_{l}=(r_{l,1},\cdots,r_{l,k})^T,\quad \end{equation*}
    \begin{equation*} 
  r_{l,\min}:=\min_{j\in\Omega}r_{l,j},  \quad \text{and}\quad r_{l,\max}:=\max_{j\in\Omega}r_{l,j}\quad \forall\ l=1,2.
\end{equation*}
Note that $\gamma_l=\beta_l\circ r_{l} $ for every $l=1,2$. In general, given a column vector $V\in \mathbb{R}^k$,  set  
$$\|V\|_{\infty}=\max_{j=1,\cdots,k}|V_j|,\quad \|V\|=\sqrt{\sum_{j=1}^kV_j^2},\quad \widehat{V}_j=V_j-\frac{\sum_{i=1}V_i}{k}\quad \text{and}\quad  \widehat{V}=(\widehat{V}_1,\cdots,\widehat{V}_k)^{T}.$$
Given $V$ and $\tilde{V}\in\mathbb{R}^k$, we say that 
$$
V\le_1 \tilde{V}\quad  \text{if}\quad V_{j}\le \tilde{V}_{j}\quad \forall\ j=1,\cdots,k, 
$$
$$
V<_1\tilde{V}\quad \text{if}\quad V\le \tilde{V}\quad \text{and}\quad V_{j}<\tilde{V}_j\ \text{for some}\ j\in\{1,\cdots,k\},
$$
and 
$$ 
V\ll_1\tilde{V}\quad \text{if}\quad V_{j}<\tilde{V}_{j}\quad \forall\ j=1,\cdots,k.
$$
\noindent Next, since $\lambda_*(\mathcal{L})$ is simple,  and $\mathcal{L}$ is real symmetric,  there is an orthogonal matrix $Q$ such that 
$$
Q^T\mathcal{L}Q={\rm diag}(\tilde{d}_1,\cdots,\tilde{d}_k)\quad \text{and}\quad Q^{T}Q=I,
$$
where $\tilde{d}_i$,  $i=1,\cdots,k$ are the eigenvalues of the symmetric matrix $\mathcal{L}$ with $\tilde{d}_1=\lambda_*(\mathcal{L})=0>\max_{i=2,\cdots,k}\tilde{d}_i$. Hence, 
\begin{equation}\label{Z1}
    e^{t\mathcal{L}}=Qe^{t{\rm diag}(\tilde{d}_1,\cdots,\tilde{d}_k)}Q^{T}\quad \forall\ t\ge 0.
\end{equation}
Moreover, if we set $Q_{j}$, $j=1,\cdots,k$, as the column vectors of the square matrix $Q$,  then $\{Q_j\}_{j=1}^k$ is an orthonormal basis of $\mathbb{R}^k$. It then follows from \eqref{Z1} that 
\begin{equation}\label{Z2}
    e^{t\mathcal{L}}U=\sum_{j=1}^ke^{t\tilde{d}_j}<U,Q_j>Q_j\quad \forall\ t\ge 0,\ U\in\mathbb{R}^k,
\end{equation}
where $<,>$ denotes the inner product on $\mathbb{R}^k$. As a result, it follows from \eqref{Z2} and the fact that $\max_{j=2,\cdots,k}\{\tilde{d}_j\}<\tilde{d}_1=0 $ that 
\begin{equation}\label{Z3}
    \Big\|e^{t\mathcal{L}}U-<U,Q_1>Q_1\Big\|\le e^{t\tilde{d}_*}\|U\|\quad \forall\ U\in \mathbb{R}^k
\end{equation}
where $ \tilde{d}_*:=\min_{j=2,\cdots,k}\tilde{d_{j}}<0$. Note that $Q_1=\frac{1}{\sqrt{k}}{\bf 1}$ so that
\begin{equation}\label{Z4}
<U,Q_1>Q_1=\frac{\sum_{j=1}^kU_j}{k}{\bf 1},\quad \text{and}\quad \widehat{U}=U-\frac{\sum_{j=1}^kU_j}{k}{\bf 1}\quad  \forall\ U\in \mathbb{R}^k.
\end{equation}
Observe also that
\begin{equation} \label{Z8}
\|\widehat{U}\|_{\infty}\le 2\|U\|_{\infty},\quad \|\widehat{U}\|\le 2\|U\| \quad \text{and}\quad \|U\circ V\|\le\|V\|_{\infty}\|U\|\quad \forall\ U,V\in\mathbb{R}^k.
\end{equation}
Let $\{B_1,\cdots,B_k\}$ denote the standard orthonormal basis of $\mathbb{R}^k$.  Recall that  since $\mathcal{L}$ is irreducible and cooperative, $\{e^{t\mathcal{L}}\}_{t\ge 0}$ generates a strongly monotone semiflow on $\mathbb{R}^k$. Then 
\begin{equation}\label{Z8-2-2}
 {\bf 0}\ll_1   e^{t\mathcal{L}}B_j \quad \forall\ j=1,\cdots,k,\quad \text{and}\quad  t>0.
\end{equation}
 As a direct consequence of  \eqref{Z8-2-2}, we have the following Harnack's inequality type, which plays an important in the proofs of our main results.
\begin{lem}\label{Harnck-lemma} Let $d>0$ and $M\in C(\mathbb{R}_+:[\mathbb{R}]^k)$ such that 
\begin{equation*}
    m_{\infty}:=\sup_{t\ge 0}\|M(t)\|_{\infty}<\infty.
\end{equation*}
 Let $\tilde{\mathcal{L}}$ be a $k\times k$ irreducible square matrix  generating a strongly positive semigroup $\{e^{t\tilde{\mathcal{L}}}\}_{t\ge 0}$ on $\mathbb{R}^k$.  
Then there is a positive number $\tilde{c}_{d,m_{\infty}}$, such that any nonnegative solution $U(t)$ of 
\begin{equation}\label{Harnack-eq1}
    \frac{dU}{dt}=d\tilde{\mathcal{L}}U +M(t)\circ U, \ t>0,
\end{equation}
satisfies
\begin{equation}\label{Harnack-eq2}
    \|U(t)\|_{\infty}\le \tilde{c}_{d,m_{\infty}}U_{\min}(t)\quad \forall\ t\ge 1.
\end{equation}
    
\end{lem}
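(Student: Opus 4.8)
The plan is to exploit the strong positivity of the semigroup generated by $d\tilde{\mathcal{L}}$ together with a two-sided comparison that absorbs the perturbation $M(t)\circ U$. First I would observe that $M(t)\circ U={\rm diag}(M(t))U$, so that \eqref{Harnack-eq1} is the linear nonautonomous cooperative system $\frac{dU}{dt}=A(t)U$ with $A(t):=d\tilde{\mathcal{L}}+{\rm diag}(M(t))$. Since $-m_{\infty}\le M_j(t)\le m_{\infty}$ for all $j$ and $t$, and since adding a diagonal matrix does not alter the nonnegative off-diagonal entries of $d\tilde{\mathcal{L}}$, the matrices $C:=d\tilde{\mathcal{L}}-m_{\infty}I$ and $B:=d\tilde{\mathcal{L}}+m_{\infty}I$ satisfy $C\le A(t)\le B$ entrywise, and all three are cooperative.

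Next I would invoke the comparison principle for cooperative linear systems to sandwich the nonnegative solution $U$. For any $0\le s\le t$, comparing $U$ with the solutions of $\frac{dV}{dt}=CV$ and $\frac{dW}{dt}=BW$ that share the data $V(s)=W(s)=U(s)$, and using that $\pm m_{\infty}I$ commutes with $d\tilde{\mathcal{L}}$, yields
$$e^{-m_{\infty}(t-s)}e^{d(t-s)\tilde{\mathcal{L}}}U(s)\le_1 U(t)\le_1 e^{m_{\infty}(t-s)}e^{d(t-s)\tilde{\mathcal{L}}}U(s).$$

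The key step is then to specialize to a fixed time gap. Fix $t\ge 1$ and take $s=t-1\ge 0$. The matrix $P:=e^{d\tilde{\mathcal{L}}}$, evaluated at the fixed positive time $1$, is strongly positive because $\tilde{\mathcal{L}}$ is irreducible and generates a strongly positive semigroup; hence $p_{\min}:=\min_{i,j}P_{ij}>0$ and $p_{\max}:=\max_{i,j}P_{ij}<\infty$ are well-defined and independent of $t$. From the lower bound, each component satisfies $U_i(t)\ge e^{-m_{\infty}}p_{\min}\sum_j U_j(t-1)$, so in particular $U_{\min}(t)\ge e^{-m_{\infty}}p_{\min}\sum_j U_j(t-1)$; from the upper bound, $\|U(t)\|_{\infty}\le e^{m_{\infty}}p_{\max}\sum_j U_j(t-1)$. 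Dividing (the case $U(t-1)={\bf 0}$, hence $U\equiv{\bf 0}$ by uniqueness, being trivial) gives \eqref{Harnack-eq2} with the explicit constant $\tilde{c}_{d,m_{\infty}}=e^{2m_{\infty}}p_{\max}/p_{\min}$, which depends only on $d$, $m_{\infty}$, and the fixed matrix $\tilde{\mathcal{L}}$.

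The main point to handle carefully is the comparison argument producing the two-sided bound: one must verify positivity of the propagator of the nonautonomous cooperative system governed by $A(t)$ and, more importantly, that the resulting constant remains uniform in $t$. Uniformity is exactly what forces the choice of the constant time step $s=t-1$ rather than $s=0$: stepping back by precisely one time unit keeps $P=e^{d\tilde{\mathcal{L}}}$ (and thus $p_{\min},p_{\max}$) independent of $t$, so the estimate holds simultaneously for every $t\ge 1$. I expect no further technical difficulty, since the perturbation enters only diagonally and is controlled uniformly by $m_{\infty}$.
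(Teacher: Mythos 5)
Your proof is correct and follows essentially the same route as the paper's: both arguments step back one unit of time and combine the uniform diagonal bound $\pm m_{\infty}$ with the strong positivity of the fixed propagator $e^{d\tilde{\mathcal{L}}}$ to compare $\|U(t)\|_{\infty}$ and $U_{\min}(t)$. Your two-sided sandwich $e^{-m_{\infty}}e^{d\tilde{\mathcal{L}}}U(t-1)\le_1 U(t)\le_1 e^{m_{\infty}}e^{d\tilde{\mathcal{L}}}U(t-1)$ is a mild streamlining of the paper's version, which instead normalizes $\lambda_*(\tilde{\mathcal{L}})=0$ and uses a positive eigenvector for the upper bound and the standard basis vectors $B_{j_t}$ for the lower bound, yielding a constant of the same form as your $e^{2m_{\infty}}p_{\max}/p_{\min}$.
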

\begin{proof} Without loss of generality, we may suppose $\lambda_*(\tilde{\mathcal{L}})=0$, otherwise we can replace $\tilde{\mathcal{L}}$ by $\tilde{\mathcal{L}}-\lambda_*(\tilde{\mathcal{L}}){\rm diag}({\bf 1})$ and $M(t)$ by $M(t)+d\lambda_*(\tilde{\mathcal{L}}){\rm diag}({\bf 1})$.  Let $\tilde{E}\in\mathbb{R}^k$ be the positive eigenvalue of $\tilde{L}$ satisfying $\min_{j=1,\cdots,k}\tilde{E}_j=1$. Let $U(t)$ be a nonnegative solution of \eqref{Harnack-eq1}. Then 
$$
\frac{dU}{dt}\le d\tilde{\mathcal{L}}U+m_{\infty}U\quad \forall\ t>0.
$$
This implies that 
\begin{equation*}
    U(t+\tau)\le_1 e^{t m_{\infty}}\|U(\tau)\|_{\infty}\tilde{ E}\quad \forall\ t>0,\ \tau>0.
\end{equation*}
Hence
\begin{equation}\label{Harnack-eq3}
   \|U(t+1)\|_{\infty}\le e^{m_{\infty}}\|\tilde{E}\|_{\infty}\|U(t)\|_{\infty}\quad \forall\ t>0. 
\end{equation}
Next, for every $\tau>0$, there is some $j_{\tau}=1,\cdots,k$ such that 
$$ 
\|U(\tau)\|_{\infty} B_{j_{\tau}}\le_1 U(\tau),
$$
where $\{B_1,\cdots,B_k\}$ is the standard orthonormal basis of $\mathbb{R}^k$. Observing that
$$
 d\tilde{\mathcal{L}}(e^{tm_{\infty}}U(\tau+t)) \le_1 \frac{d(e^{tm_{\infty}}U(t+\tau))}{dt}\quad \forall t>0,\ \tau>0,
$$
then 
\begin{equation*}
\|U(\tau)\|_{\infty}e^{td\tilde{\mathcal{L}}}B_{j_{\tau}}\le_1    e^{t m_{\infty}}U(t+\tau)\quad \forall\ t>0,\ \tau >0.
\end{equation*}
In particular, 
\begin{equation}\label{Harnack-eq4}
    e^{-m_{\infty}}\|U(t)\|_{\infty}e^{d\tilde{\mathcal{L}}}B_{j_t}\le U(t+1)\quad \forall\ t>0.
\end{equation}
But, since $\{e^{t\tilde{\mathcal{L}}}\}_{t>0}$ is strictly positive, setting $\tilde{B}^{d}_{l}=e^{d\tilde{\mathcal{L}}}B_{l}$, for each $l=1,\cdots,k$,  we have that 
$$
{\bf 0}\ll_1 \tilde{B}^{*,d}:=(\min_{j\in\Omega}\tilde{B}^d_{j,1},\cdots,\min_{j\in\Omega}\tilde{B}^d_{j,k})^T.
$$
Therefore, in view of \eqref{Harnack-eq4}, we have that 
\begin{equation*}
     e^{-m_{\infty}}\|U(t)\|_{\infty}\tilde{B}^{*,d}\le_1 U(t+1)\quad \forall\ t>0,
\end{equation*}
which  implies that
\begin{equation}\label{Harnack-eq5}
    \Big(\frac{\min_{j\in\Omega}\tilde{B}^{*,d}_j}{e^{m_{\infty}}}\Big)\|U(t)\|_{\infty}\le U_{\min}(t+1)\quad \forall\ t>0.
\end{equation}
Finally, by \eqref{Harnack-eq3} and \eqref{Harnack-eq5}, we have that 
$$ 
\|U(t+1)\|_{\infty}\le \frac{e^{2m}\|\tilde{E}\|_{\infty}}{\min_{j\in\Omega}\tilde{B}^{*,d}}U_{\min}(t+1)\quad \forall\ t>0.
$$
Therefore, \eqref{Harnack-eq2} holds with $\tilde{c}_{d,m_{\infty}}=\frac{e^{2m_{\infty}}\|\tilde{E}\|_{\infty}}{\min_{j\in\Omega}\tilde{B}^{*,d}}>0$.    
\end{proof}
\begin{rk} We note that in Lemma \ref{Harnck-lemma}, it is not required that the irreducible matrix $\tilde{\mathcal{L}}$ to be irreducible. Lemma \ref{Harnck-lemma} generalizes the Hanack's inequality for continuous reaction-diffusion models subject to the homogeneous boundary condtions to the patch model.
    
\end{rk}
Given a solution $(S(t),I_1(t),I_2(t))$ of \eqref{model-eq1} with initial data in $\mathcal{E}$, we have that  $$\|\beta_l S-\gamma_l\|_{\infty}\le  N\beta_{\max}+\gamma_{\max}:=m_{\infty}\quad 
\forall\ t\ge 0, \ l=1,2.$$ 
It then follows from Lemma \ref{Harnck-lemma} that there is a positive constant $c_{*}=c_*(d_1,d_2,m_{\infty})$ such that 
\begin{equation}\label{DP1}
\|I_l(t)\|_{\infty}\le c_*\min_{j\in\Omega}I_{l,j}(t)\quad \forall\ t>1.
\end{equation}
Inequality \eqref{DP1} will be of significant to completing some of the proofs of our main results in the subsequent section.  The following basic result on the uniform persistence of the susceptible population holds.

 \begin{lem}\label{lem-0}
     Let $(S(t),I_1(t),I_2(t))$ be a solution of \eqref{model-eq1} with initial data in $\mathcal{E}$. Then ${\bf 0}<<_1 S(t)$ for all $t>0$ and 
     \begin{equation}\label{lem-0-eq}
         \liminf_{t\to\infty}\sum_{j\in\Omega}S_j(t)\ge \min\Big\{N, \frac{\gamma_{\min}}{\beta_{\max}}\Big\}.
     \end{equation}
 \end{lem}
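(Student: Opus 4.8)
The plan is to treat the two assertions separately, establishing the strict positivity first since it is needed to launch the persistence estimate. For the strict positivity I would rewrite the $S$-equation in \eqref{model-eq2} as a linear non-autonomous equation with a nonnegative source: setting $M(t):=-\sum_{l=1}^2\beta_l\circ I_l(t)$ and $G(t):=\sum_{l=1}^2\gamma_l\circ I_l(t)$, the identity $(\beta_l\circ S)\circ I_l=(\beta_l\circ I_l)\circ S$ lets me write
\[
\frac{d}{dt}S=\big(d_S\mathcal{L}+{\rm diag}(M(t))\big)S+G(t).
\]
Both $M$ and $G$ are bounded and continuous (the semiflow on $\mathcal{E}$ is bounded) and $G(t)\ge_1{\bf 0}$. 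Because $\mathcal{L}$ has nonnegative off-diagonal entries and is irreducible, $d_S\mathcal{L}+{\rm diag}(M(t))$ is cooperative and irreducible for each $t$, so its evolution operator $\Phi(t,s)$ is strongly positive for $t>s$. From the variation-of-constants formula $S(t)=\Phi(t,0)S(0)+\int_0^t\Phi(t,s)G(s)\,ds$ I would note that both terms are nonnegative and argue that at least one is strictly positive: since initial data in $\mathcal{E}$ forces $N>0$, either $S(0)\ne{\bf 0}$, in which case $\Phi(t,0)S(0)\gg_1{\bf 0}$ for $t>0$, or $I_l(0)\ne{\bf 0}$ for some $l$, in which case (by the positivity of the infected components already recorded in the excerpt together with $\gamma_l\gg_1{\bf 0}$) $G(s)\gg_1{\bf 0}$ for $s>0$, making the integral strictly positive. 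Either way ${\bf 0}\ll_1 S(t)$ for all $t>0$.

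For the persistence bound I would track the total susceptible population $\bar S(t):=\sum_{j\in\Omega}S_j(t)$. Summing the $S$-equation over $j$ and using that the row sums of the symmetric matrix $\mathcal{L}$ vanish, so that $\sum_j(\mathcal{L}S)_j=0$, gives
\[
\frac{d\bar S}{dt}=\sum_{l=1}^2\sum_{j\in\Omega}\gamma_{l,j}I_{l,j}-\sum_{l=1}^2\sum_{j\in\Omega}\beta_{l,j}S_jI_{l,j}.
\]
The first sum is bounded below by $\gamma_{\min}\bar I$, where $\bar I:=\sum_{l,j}I_{l,j}=N-\bar S$ by the conservation law \eqref{Eq1:1}. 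For the second sum I would use $\beta_{l,j}\le\beta_{\max}$ together with the elementary inequality $\sum_j a_jb_j\le(\sum_j a_j)(\sum_j b_j)$ for nonnegative reals, applied to $a_j=S_j$ and $b_j=\sum_l I_{l,j}$, to obtain $\sum_{l,j}\beta_{l,j}S_jI_{l,j}\le\beta_{\max}\bar S\bar I$. These two bounds collapse the whole system into the scalar differential inequality
\[
\frac{d\bar S}{dt}\ge(N-\bar S)\big(\gamma_{\min}-\beta_{\max}\bar S\big)=:f(\bar S),\qquad \bar S\in[0,N].
\]

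Finally I would compare $\bar S$ with the scalar ODE $\dot v=f(v)$. Since $S(t)\gg_1{\bf 0}$ for $t>0$, I may fix any $t_0>0$, start the comparison at $v(t_0)=\bar S(t_0)>0$, and conclude $\bar S(t)\ge v(t)$ for $t\ge t_0$ by the standard comparison principle for scalar differential inequalities. The function $f$ is the upward-opening parabola $\beta_{\max}(N-v)(u^*-v)$ with roots $v=N$ and $v=u^*:=\gamma_{\min}/\beta_{\max}$, and a phase-line analysis shows that every trajectory of $\dot v=f(v)$ starting in $(0,N]$ satisfies $\liminf_{t\to\infty}v(t)\ge\min\{N,u^*\}$: when $u^*<N$ it converges to the stable equilibrium $u^*$, while when $u^*\ge N$ one has $f>0$ on $(0,N)$ and the trajectory increases to $N$. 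This yields \eqref{lem-0-eq}. I expect the only real subtlety to be the reduction to the scalar inequality — in particular, selecting the factorization $\sum_j a_jb_j\le\bar S\,\bar I$ that keeps the right-hand side a logistic-type quadratic in $\bar S$ alone — after which the phase-line argument and the two-case split according to the sign of $N-u^*$ are routine.
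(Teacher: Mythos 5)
Your proposal is correct and follows essentially the same route as the paper: the persistence bound is obtained by the identical computation (summing the $S$-equation, using that the column sums of $\mathcal{L}$ vanish, the bound $\sum_j a_jb_j\le(\sum_j a_j)(\sum_j b_j)$, and comparison with the logistic-type scalar inequality $\frac{d\bar S}{dt}\ge(N-\bar S)(\gamma_{\min}-\beta_{\max}\bar S)$). Your positivity argument is also the paper's, merely repackaged: where you invoke strong positivity of the nonautonomous evolution operator $\Phi(t,s)$ and split according to whether $S(0)\ne{\bf 0}$ or some $I_l(0)\ne{\bf 0}$, the paper runs the same dichotomy and justifies the positive propagator concretely by the exponential shift $e^{2N\beta_{\max}t}S$ compared against the strongly positive semigroup $\{e^{td_S\mathcal{L}}\}$ via Duhamel's formula.
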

 \begin{proof}
     Let $(S(t),I_1(t),I_2(t))$ be a solution  of \eqref{model-eq1} with initial data in $\mathcal{E}$. We distinguish two cases.
     
     \noindent{\bf Case 1.} $I_{l,j}(0)=0$ for every $l=1,2$ and $j\in\Omega$. Then $I_{l,j}(t)=0$ for every $t>0$, $l=1,2$ and $j\in\Omega$. This shows that $S(t)$ satisfies 
     $$
     \begin{cases}
     \frac{dS}{dt}=d_S\mathcal{L}S &  t>0,\cr 
     \sum_{j\in\Omega}S_j(0)=N.
     \end{cases}
     $$
 Hence, there is some $j_0\in\Omega$ such that $S_{j_0}(0)>0$. Hence ${\bf 0}<<_1 S_{j_0}(0)e^{t\mathcal{L}}B_{j_0}\le_1 S(t)$ for all $t>0$.

 \noindent{\bf Case 2.} $I_{l,j_0}(0)>0$ for some $l\in\{1,2\}$ and $j_0\in\{1,\cdots,k\}$. Then ${\bf 0}<<_1I_{l}(t)$ for every $t>0$. As a result, by \eqref{model-eq2},
 $$
  d_S\mathcal{L}\Big(e^{2N\beta_{\max}t}S\Big) +e^{2N\beta_{\max}t}\gamma_{l}\circ I_l\le_1 \frac{d(e^{2N\beta_{\max}t}S)}{dt} \quad \forall\ t>0.
 $$
 This implies that, since $\{e^{t\mathcal{L}}\}$ is strongly positive, 
  \begin{align*}
 {\bf 0}\ll_1 &\int_{0}^te^{\tau d_S\mathcal{L}}\Big(e^{2N\beta_{\max}\tau}\gamma_{l}\circ I_l(\tau)\Big)d\tau\cr 
 \le_1 & e^{2N\beta_{\max}t}e^{td_S\mathcal{L}}(S(0))+\int_{0}^te^{\tau d_S\mathcal{L}}\Big(e^{2N\beta_{\max}\tau}\gamma_{l}\circ I_l(\tau)\Big)d\tau\cr 
 \le_1& e^{2N\beta_{\max}t}S(t)\quad \forall\ t>0.
 \end{align*}
 Hence ${\bf 0}<<_1S(t)$ for every $t>0$. It remains to show that \eqref{lem-0-eq} holds. Observe  from \eqref{Eq1:1} that 
 \begin{align*}
 \frac{d\sum_{j\in\Omega}S_j}{dt}= &\sum_{j\in\Omega}\sum_{l=1}^2\gamma_{l,j}I_{l,j}-\sum_{j\in\Omega}S_j\sum_{l=1}^2\beta_{l,j}I_{l,j}\cr 
 \ge & \gamma_{\min}\sum_{j\in\Omega}\sum_{l=1}^2I_{l,j}-\beta_{\max}\sum_{j\in\Omega}S_j\sum_{l=1}^2I_{l,j}\cr 
 =& \gamma_{\min}(N-\sum_{j\in\Omega}S_j)-\beta_{\max}\sum_{j\in\Omega}S_j\sum_{l=1}^2I_{l,j}\cr 
 \ge &  \gamma_{\min}(N-\sum_{j\in\Omega}S_j)-\beta_{\max}\sum_{j\in\Omega}S_j(N-\sum_{j\in\Omega}S_j)\cr
 =&\gamma_{\min}\Big(\frac{\gamma_{\min}}{\beta_{\max}}-\sum_{j\in\Omega}S_j\Big)\Big(N-\sum_{j\in\Omega}S_j\Big).
 \end{align*}
 Therefore, since ${\bf 0}<<_1S(t)$ for every $t>0$, we deduce from the last inequality that \eqref{lem-0-eq}.
 \end{proof}

The following results hold.

\begin{lem}\label{lem1} Fix $l=1,2$ and suppose that $\mathcal{R}_{0,l}(N)>1$.  There is $\delta_l^*>0$  such that 
\begin{equation}\label{Z10}   
\limsup_{t\to\infty}\Big\|S(t)-\frac{N}{k}{\bf 1}\Big\|\ge \delta_l^*
\end{equation} 
for any solution $(S(t),I_1(t),I_2(t))$  of \eqref{model-eq1} with a initial data in $\mathcal{E}$ satisfying $\|I_l(0)\|_{\infty}>0$. 
\end{lem}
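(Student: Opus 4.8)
The plan is to argue by contradiction, exploiting the fact that $\mathcal{R}_{0,l}(N)>1$ is equivalent to $\lambda_l:=\lambda_*\big(\tfrac{N}{k}\mathcal{F}_l-\mathcal{V}_l\big)>0$, as recorded after \eqref{R_0-l}. The underlying intuition is that if $S(t)$ were to stay very close to $\tfrac{N}{k}{\bf 1}$ for all large $t$, then the strain-$l$ equation in \eqref{model-eq2} would be a small perturbation of its linearization at the DFE, whose governing matrix $\tfrac{N}{k}\mathcal{F}_l-\mathcal{V}_l$ has the \emph{positive} spectral bound $\lambda_l$. This would force $I_l(t)$ to grow exponentially and thereby contradict the uniform a priori bound $\|I_l(t)\|_{\infty}\le N$, which is immediate from \eqref{Eq1:1} and nonnegativity.

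I would first fix the threshold in a way that depends only on the model parameters: choose $\delta_l^*\in\big(0,\lambda_l/\beta_{\max}\big)$, for instance $\delta_l^*=\lambda_l/(2\beta_{\max})$, and set $\mu:=\lambda_l-\beta_{\max}\delta_l^*>0$. The fact that $\delta_l^*$ is independent of the solution is precisely what the uniform statement \eqref{Z10} demands. Suppose, toward a contradiction, that some solution $(S,I_1,I_2)$ with $\|I_l(0)\|_{\infty}>0$ satisfies $\limsup_{t\to\infty}\|S(t)-\tfrac{N}{k}{\bf 1}\|<\delta_l^*$. Then there is $T>0$ with $\|S(t)-\tfrac{N}{k}{\bf 1}\|<\delta_l^*$ for all $t\ge T$, and since the Euclidean norm dominates each coordinate this yields $S_j(t)>\tfrac{N}{k}-\delta_l^*$ for every $j\in\Omega$ and every $t\ge T$.

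Next I would set up a cooperative comparison. Writing the strain-$l$ equation as $\tfrac{d}{dt}I_l=A(t)I_l$ with $A(t):=d_l\mathcal{L}+{\rm diag}\big(\beta_l\circ S(t)-\gamma_l\big)$, the coordinate bound gives the entrywise inequality $A(t)\ge B$ for all $t\ge T$, where $B:=\big(\tfrac{N}{k}\mathcal{F}_l-\mathcal{V}_l\big)-\beta_{\max}\delta_l^*\,{\rm diag}({\bf 1})$; indeed $A(t)-B$ is the diagonal matrix with $j$-th entry $\beta_{l,j}\big(S_j(t)-\tfrac{N}{k}\big)+\beta_{\max}\delta_l^*>0$. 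Both $A(t)$ and $B$ have nonnegative off-diagonal entries, and $B$ is irreducible because it differs from $\tfrac{N}{k}\mathcal{F}_l-\mathcal{V}_l$ only on the diagonal. Since $\|I_l(0)\|_{\infty}>0$ forces $I_l(T)\gg_1{\bf 0}$ by irreducibility of $\mathcal{L}$, the comparison principle for cooperative systems gives $I_l(t)\ge_1 e^{(t-T)B}I_l(T)$ for $t\ge T$. By Perron--Frobenius, $\lambda_*(B)=\lambda_l-\beta_{\max}\delta_l^*=\mu>0$ is simple with a strictly positive eigenvector $\phi$, so choosing $c>0$ with $I_l(T)\ge_1 c\phi$ yields $I_l(t)\ge_1 c\,e^{\mu(t-T)}\phi$, whence $\|I_l(t)\|_{\infty}\to\infty$. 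This contradicts $\|I_l(t)\|_{\infty}\le N$ and completes the argument.

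I expect the main difficulty to be organizational rather than conceptual: the one point that must be handled with care is the uniform choice of $\delta_l^*$ (depending only on $\lambda_l$ and $\beta_{\max}$, not on the trajectory) together with the verification that the perturbed matrix $B$ remains irreducible with spectral bound $\mu>0$, so that Perron--Frobenius delivers the positive eigenvector driving the exponential lower bound. The comparison step itself is routine, resting only on the cooperativity of the strain-$l$ equation and on $I_l(T)\gg_1{\bf 0}$, both of which are already guaranteed by the structure of \eqref{model-eq1}.
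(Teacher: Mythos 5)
Your proposal is correct and follows essentially the same route as the paper: both negate the conclusion to get a coordinatewise lower bound $S_j(t)>\frac{N}{k}-O(\delta_l^*)$ for large $t$, compare the cooperative strain-$l$ equation against the linearization $d_l\mathcal{L}+{\rm diag}\big(\frac{N}{k}\beta_l-\gamma_l\big)$ perturbed by $\beta_{\max}\delta_l^*$, and use the positive Perron--Frobenius eigenvector to force exponential growth of $I_l$, contradicting the conserved total population. Your direct shift of the matrix by $\beta_{\max}\delta_l^*\,{\rm diag}({\bf 1})$ is algebraically identical to the paper's device of working with $e^{\tilde{\delta}_l t}I_l(t)$, so the two arguments differ only in bookkeeping (your choice $\delta_l^*=\lambda_l/(2\beta_{\max})$ versus the paper's $\lambda_l/(4\beta_{\max})$, which absorbs the factor $2$ in its contradiction hypothesis).
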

\begin{proof} First, note that $\lambda_*(d_l\mathcal{L}+{\rm diag}(\frac{N}{k}\beta_l-\gamma_l))>0$ since $\mathcal{R}_{0,l}(N)>1$.    We claim that \eqref{Z10} works with $\delta_l^*:=\frac{\lambda_*(d_l\mathcal{L}+{\rm diag}(\frac{N}{k}\beta_l-\gamma_l))}{4\beta_{\max}}$. Indeed, suppose by contradiction this  is false. Hence, there is some  initial data $(S(0),I_1(0),I_2(0))\in \mathcal{E}$ with $\|I_l(0)\|_{\infty}>0$ such that 
\begin{equation*}  
\Big\|S(t)-\frac{N}{k}{\bf 1}\Big\|\le2\delta_l^*\quad \forall\ t\ge 0.
\end{equation*}
This in turn together with \eqref{model-eq2} imply that the function $(\underline{I}_1(t),\underline{I}_2(t))=(e^{\tilde{\delta}_lt}I_1(t),e^{\tilde{\delta}_lt}I_2(t))$  satisfies
\begin{equation}\label{Sec2:Eq3}    
\begin{cases}     
\frac{d\underline{I}_1}{dt}\ge d_1\mathcal{L}\underline{I}_1+\Big(\gamma_1-\frac{N}{k}\beta_1\Big)\circ \underline{I}_1 & t>0,\cr     
\frac{d \underline{I}_2}{dt}\ge d_2\mathcal{L}\underline{I}_{2}+\Big(\gamma_2-\frac{N}{k}\beta_2\Big)\circ \underline{I}_2 & t>0.    
\end{cases}\end{equation}
where $\tilde{\delta}_l:=2\delta_l^*\beta_{\max}. $  Let $E_l$ be a positive eigenvector associated with $\lambda_*(d_l\mathcal{L}+{\rm diag}(\frac{N}{k}\beta_l-\gamma_l))$. Since ${\bf 0}<<_1I_{l}(1)$, then there is some $\eta_l>0$ such that $  \eta_lE_l<<_1e^{\overline{\delta}}I_{l}(1)$. It then follows from the monotonicity of the semiflow generated by the matrix semigroup $\{e^{t(d_l\mathcal{L}+{\rm diag}(\frac{N}{k}\beta_l-\gamma_l))}\}_{t\ge 0}$ and \eqref{Sec2:Eq3} that 
$$ \underline{I}_l(t)\ge\eta_l e^{t\lambda_*(d_l\mathcal{L}+{\rm diag}(\frac{N}{k}\beta_l-\gamma_l))}E_l\quad \forall\ t>0,$$
which implies 
$$ I_l(t)\ge\eta_l e^{t(\lambda_*(d_l\mathcal{L}+{\rm diag}(\frac{N}{k}\beta_l-\gamma_l))-\tilde{\delta})}E_l\quad \forall\ t>0.$$
As a result, $\sum_{j\in\Omega}I_{l,j}(t)\to \infty$. This  contradicts with \eqref{Eq1:1}, so we deduce that the desired result hold. 
\end{proof}

\begin{lem}\label{lem2} Fix $l=1,2$ and  suppose that $\mathcal{R}_{0,l}(N)>1$. Then there is $\sigma_l^*>0$ such that 
\begin{equation}\label{Z5}
    \limsup_{t\to\infty}\sum_{j\in\Omega}\sum_{p=1}^2I_{p,j}(t)\ge \sigma_l^*
\end{equation} for every solution $(S(t),I_1(t),I_2(t))$  of \eqref{model-eq1} with an data in $\mathcal{E}$ satisfying $\|I_l(0)\|_{\infty}>0$.

\end{lem}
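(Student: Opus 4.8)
The plan is to argue by contradiction, leveraging Lemma~\ref{lem1} together with the averaging property of the dispersal semigroup $\{e^{td_S\mathcal{L}}\}_{t\ge 0}$ recorded in \eqref{Z3}. Write $\mathcal{I}(t):=\sum_{j\in\Omega}\sum_{p=1}^2 I_{p,j}(t)$ for the total infected population, so that the conservation identity \eqref{Eq1:1} reads $\sum_{j\in\Omega}S_j(t)=N-\mathcal{I}(t)$. I will fix a small $\sigma_l^*>0$ (pinned down in the last step) and suppose, for contradiction, that some solution with $\|I_l(0)\|_\infty>0$ satisfies $\limsup_{t\to\infty}\mathcal{I}(t)<\sigma_l^*$. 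The guiding principle is that if $\mathcal{I}(t)$ is eventually tiny, then $S(t)$ must be eventually close to $\tfrac{N}{k}{\bf 1}$, which will contradict the bound $\limsup_{t\to\infty}\|S(t)-\tfrac{N}{k}{\bf 1}\|\ge\delta_l^*$ from Lemma~\ref{lem1} (applicable precisely because $\mathcal{R}_{0,l}(N)>1$ and $\|I_l(0)\|_\infty>0$).

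The main step is a quantitative ``smallness of $\mathcal{I}$ implies smallness of $\|S-\tfrac{N}{k}{\bf 1}\|$'' estimate. Decomposing $S-\tfrac{N}{k}{\bf 1}=\widehat{S}-\tfrac{\mathcal{I}}{k}{\bf 1}$ via \eqref{Z4} gives $\|S(t)-\tfrac{N}{k}{\bf 1}\|\le\|\widehat{S}(t)\|+\tfrac{\mathcal{I}(t)}{\sqrt{k}}$, so it suffices to control the mean-zero part $\widehat{S}(t)$. Applying the projection $\widehat{(\cdot)}$ to the variation-of-constants formula
\begin{equation*}
S(t)=e^{td_S\mathcal{L}}S(0)+\int_0^t e^{(t-\tau)d_S\mathcal{L}}\Big(\sum_{p=1}^2(\gamma_p-\beta_p\circ S)\circ I_p\Big)(\tau)\,d\tau,
\end{equation*}
and using that $\langle e^{\sigma d_S\mathcal{L}}U,Q_1\rangle=\langle U,Q_1\rangle$ (since $\mathcal{L}$ is symmetric with $\mathcal{L}Q_1={\bf 0}$), estimate \eqref{Z3} applied at time $\sigma d_S$ yields $\|\widehat{e^{\sigma d_S\mathcal{L}}U}\|\le e^{\sigma d_S\tilde{d}_*}\|U\|$ for all $\sigma\ge 0$. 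Combining this with the pointwise bound $\|(\gamma_p-\beta_p\circ S)\circ I_p\|\le(\gamma_{\max}+N\beta_{\max})\mathcal{I}$ — which follows from \eqref{Z8}, from $\|S\|_\infty\le N$, and from $\|I_p\|\le\sum_{j\in\Omega}I_{p,j}\le\mathcal{I}$ — one obtains
\begin{equation*}
\|\widehat{S}(t)\|\le e^{td_S\tilde{d}_*}\|S(0)\|+C_0\int_0^t e^{(t-\tau)d_S\tilde{d}_*}\mathcal{I}(\tau)\,d\tau,\qquad C_0:=2(\gamma_{\max}+N\beta_{\max}).
\end{equation*}

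Since $\tilde{d}_*<0$, splitting the integral at a time $T$ beyond which $\mathcal{I}(\tau)<\epsilon$ (for any fixed $\epsilon$ with $\limsup_{t\to\infty}\mathcal{I}(t)<\epsilon$) shows that the $[0,T]$-contribution and the term $e^{td_S\tilde{d}_*}\|S(0)\|$ both decay to $0$, while the tail is bounded by $C_0\epsilon/(d_S|\tilde{d}_*|)$; hence $\limsup_{t\to\infty}\|\widehat{S}(t)\|\le C_0\epsilon/(d_S|\tilde{d}_*|)$ and therefore $\limsup_{t\to\infty}\|S(t)-\tfrac{N}{k}{\bf 1}\|\le\big(C_0/(d_S|\tilde{d}_*|)+1/\sqrt{k}\big)\epsilon$. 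Choosing $\sigma_l^*$ so that $\big(C_0/(d_S|\tilde{d}_*|)+1/\sqrt{k}\big)\sigma_l^*<\delta_l^*$ and letting $\epsilon\uparrow\sigma_l^*$ produces $\limsup_{t\to\infty}\|S(t)-\tfrac{N}{k}{\bf 1}\|<\delta_l^*$, contradicting Lemma~\ref{lem1}; this forces $\limsup_{t\to\infty}\mathcal{I}(t)\ge\sigma_l^*$, as claimed. The main obstacle is the second step: making rigorous, with constants uniform in $t$, the intuition that a small infected population turns the $S$-equation into an $O(\mathcal{I})$ perturbation of the pure dispersal equation $\dot{S}=d_S\mathcal{L}S$, whose solutions relax to the spatially constant state. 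The exponential contraction \eqref{Z3} on the mean-zero subspace is exactly what converts this perturbative statement into the required $\limsup$ bound.
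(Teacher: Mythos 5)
Your proof is correct and follows essentially the same route as the paper: both control the mean-zero part $\widehat{S}$ via the variation-of-constants formula and the spectral-gap estimate \eqref{Z3}, bound the forcing by $O(\mathcal{I})$ using \eqref{Z8}, and then play the resulting smallness of $\|S-\tfrac{N}{k}{\bf 1}\|$ against Lemma \ref{lem1}. The only cosmetic difference is that the paper phrases the final step as the direct inequality $\delta_l^*\le(\eta_*+\tfrac{1}{\sqrt{k}})\overline{I}^*$, yielding an explicit $\sigma_l^*$, whereas you run the logically equivalent contradiction argument.
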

\begin{proof} Let $(S(t),I_1(t),I_2(t))$ be a solution  of \eqref{model-eq1} with an initial data satisfying $\|I_l(0)\|_{\infty}>0$ and set
$$
\overline{I}^*:=\limsup_{t\to\infty}\sum_{j\in\Omega}\sum_{p=1}^2I_{p,j}(t).
$$
Observe that
\begin{align*}
    \frac{d\widehat{S}_j}{dt}=&d_S\sum_{i\in\Omega}(L_{j,i}S_i-L_{i,j}S_j)+\sum_{q=1}^2(\gamma_{q,j}I_{q,j}-\beta_{q,j}I_{q,j}S_j)\cr
    &-\frac{1}{k}\sum_{k\in\Omega}\left(d_S\sum_{p\in\Omega}(L_{k,p}S_p-L_{p,k}S_k)+\sum_{q=1}^2(\gamma_{q,k}I_{q,k}-\beta_{q,k}I_{q,k}S_k)\right)\cr 
    =&d_S\sum_{i\in\Omega}(L_{j,i}S_i-L_{i,j}S_j)+\sum_{q=1}^2(\gamma_{q,j}I_{q,j}-\beta_{q,j}I_{q,j}S_j)\cr
    &-\frac{1}{k}\left(d_S\sum_{k\in\Omega}\sum_{p\in\Omega}(L_{k,p}S_p-L_{p,k}S_k)+\sum_{k\in\Omega}\sum_{q=1}^2(\gamma_{q,k}I_{q,k}-\beta_{q,k}I_{q,k}S_k)\right)\cr
    =&d_S\sum_{i\in\Omega}(L_{j,i}S_i-L_{i,j}S_j)+\sum_{q=1}^2(\gamma_{q,j}I_{q,j}-\beta_{q,j}I_{q,j}S_j)-\frac{1}{k}\sum_{k\in\Omega}\sum_{q=1}^2(\gamma_{q,k}I_{q,k}-\beta_{q,k}I_{q,k}S_k)\cr
    =&d_S\sum_{i\in\Omega}(L_{j,i}S_i-L_{i,j}S_j)+\sum_{q=1}^2\Big(\widehat{\gamma_{q,j}I_{q,j}}-\widehat{\beta_{q,j}I_{q,j}S_j}\Big)\cr
    =&d_S\sum_{i\in\Omega}\Big(L_{j,i}\Big(\widehat{S}_i+\frac{1}{k}\sum_{k\in\Omega}S_k\Big)-L_{i,j}\Big(\widehat{S}_j+\frac{1}{k}\sum_{k\in\Omega}S_{k}\Big)\Big)+\sum_{q=1}^2\Big(\widehat{\gamma_{q,j}I_{q,j}}-\widehat{\beta_{q,j}I_{q,j}S_j}\Big)\cr
    =&d_S\sum_{i\in\Omega}\Big(L_{j,i}\widehat{S}_i-L_{i,j}\widehat{S}_j+(L_{j,i}-L_{i,j})\frac{1}{k}\sum_{k\in\Omega}S_k\Big)+\sum_{q=1}^2\Big(\widehat{\gamma_{q,j}I_{q,j}}-\widehat{\beta_{q,j}I_{q,j}S_j}\Big)\cr
    =&d_S\sum_{i\in\Omega}\Big(L_{j,i}\widehat{S}_i-L_{i,j}\widehat{S}_j\Big)+\sum_{q=1}^2\Big(\widehat{\gamma_{q,j}I_{q,j}}-\widehat{\beta_{q,j}I_{q,j}S_j}\Big),
\end{align*} 
since $\tilde{L}$ is symmetric. Thus,
\begin{equation}\label{Z6}
    \frac{d\widehat{S}}{dt}=d_S\mathcal{L}\widehat{S}+\sum_{q=1}^2\Big(\widehat{\gamma_q\circ I_{q}}-\widehat{\beta_q\circ{S}\circ{I}_q}\Big)\quad t>0.
\end{equation}
As a result, by the variation of constant formula,

\begin{equation}\label{Z7}
    \hat{S}(t+t')=e^{td_S\mathcal{L}}\hat{S}(t')+\int_0^te^{(t-\sigma)d_S\mathcal{L}}\sum_{q=1}^2\Big(\widehat{\gamma_q\circ I_{q}}-\widehat{\beta_q\circ{S}\circ{I}_q}\Big)(t'+\sigma)d\sigma\quad \forall\ t>0, \ t'\ge 0.
\end{equation}
Hence, by \eqref{Z3} and \eqref{Z8}, since $<\widehat{U},Q_1>=0$ for every $U\in\mathbb{R}^k$, then for every $t>0$ and $t'\ge 0$. 
\begin{align*}
    \|\widehat{S}(t+t')\|\le&  e^{td_S\tilde{d}_*}\|\widehat{S}(t')\|+\int_{0}^{t}e^{(t-\sigma)\tilde{d}_*d_S}\sum_{q=1}^2\Big\|\Big(\widehat{\gamma_q\circ I_{q}}-\widehat{\beta_q\circ{S}\circ{I}_q}\Big)(t'+\sigma)\Big\|d\sigma\cr
    \le &2  e^{t\tilde{d}_*d_S}\|{S}(t')\|+2\int_{0}^{t}e^{(t-\sigma)\tilde{d}_*d_S}\sum_{q=1}^2(\|\gamma_q\circ I_q\|+\|{\beta_q}\circ{S}\circ I_q\|)(t'+\sigma)d\sigma\cr
    \le &2 \sqrt{k}N e^{t\tilde{d}_*d_S}+2\int_{0}^{t}e^{(t-\sigma)\tilde{d}_*d_S}\sum_{q=1}^2(\|\gamma_q\|+\|{\beta_q}\|\|S\|_{\infty})\|{I}_q(t'+\sigma)\|_{\infty}d\sigma\cr
    \le & 2 \sqrt{k}N e^{t\tilde{d}_*d_S}+2\sqrt{k}(\gamma_{\max}+\beta_{\max}N)\int_{0}^{t}e^{(t-\sigma)\tilde{d}_*d_S}\sum_{q=1}^2\sum_{j\in\Omega}{I}_{q,j}(t'+\sigma)d\sigma. 
\end{align*}
Taking limsup as  $t'\to\infty$ in the last inequality, we get 
\begin{align*}
    \limsup_{\tau\to\infty}\|\widehat{S}(\tau)\|\le & 2 \sqrt{k}N e^{t\tilde{d}_*d_S}+2\overline{I}^*\sqrt{k}(\gamma_{\max}+\beta_{\max}N)\int_{0}^{t}e^{(t-\sigma)\tilde{d}_*d_S}d\sigma\cr 
    \le&  2 \sqrt{k}N e^{t\tilde{d}_*d_S}+\frac{2\overline{I}^*\sqrt{k}(\gamma_{\max}+\beta_{\max}N)}{|\tilde{d}_*|d_S}\quad \forall\ t>0,
\end{align*}
which implies that 
\begin{equation}\label{Z9}
    \limsup_{\tau\to\infty}\|\widehat{S}(\tau)\|\le\frac{2\overline{I}^*\sqrt{k}(\gamma_{\max}+\beta_{\max}N)}{|\tilde{d}_*|d_S}:=\eta_*\overline{I}^*,
\end{equation}
where $\eta_*= \frac{2\sqrt{k}(\gamma_{\max}+N\beta_{\max})}{|\tilde{d}_*|d_S} $. 
Recalling that $Q_1=\frac{N}{\sqrt{k}}{\bf 1}$, then by \eqref{Eq1:1} and \eqref{Z4},
\begin{align*}
\widehat{S}=S-\frac{\sum_{j\in\Omega}S_j}{k}{\bf 1} = S-\frac{N}{k}{\bf 1}+\frac{\sum_{j\in\Omega}\sum_{l=1}^2I_{l,j}}{k}{\bf 1}
\end{align*}
which yields 
$$
\Big\|S-\frac{N}{k}{\bf 1}\Big\|\le \|\widehat{S}\|+\frac{\sum_{i\in\Omega}\sum_{l=1}^2I_{l,i}}{k}\|\tilde{{\bf E}}\|=\|\widehat{S}\|+\frac{1}{\sqrt{k}}\sum_{i\in\Omega}\sum_{l=1}^2I_{l,i}.
 $$
 Hence, by \eqref{Z10} and \eqref{Z9}, we derive that 
 $$ 
 \delta_l^*\le(\eta_*+\frac{1}{\sqrt{k}})\overline{I}^*, 
 $$
 which implies that \eqref{Z5} holds for $\sigma_l^*=\frac{\delta_l^*}{\eta_*+\frac{1}{\sqrt{k}}}$.
\end{proof}

\section{Proofs of the Main Results}\label{Sec4}

\subsection{Proof of Theorem \ref{T1}}

We give a proof of Theorem \ref{T1}. 

\begin{proof}{\rm (i)} Let $(S(t),I_1(t),I_2(t))$ be a solution of \eqref{model-eq1} with initial data in $\mathcal{E}$. Thanks to Lemma \ref{lem-0}, we know that ${\bf 0}\ll_1S(t)$ for all $t>0$ and that 
$$ 
\liminf_{t\to\infty}\sum_{j\in\Omega}S_j(t)\ge \eta_0:=\min\left\{N,\frac{\gamma_{\min}}{\beta_{\max}}\right\}.
$$
Hence there is $t_0>0$ such that 
\begin{equation*}
   \frac{\eta_0}{2}\le \sum_{j\in\Omega}S_j(t)\quad \forall\ t\ge t_0.
\end{equation*}
Hence, for every $t\ge t_0$, there is $j_t\in\Omega$ such that 
\begin{equation*}\label{S-01}
   \frac{\eta_0}{2k}B_{j_t}\le_1 S(t).
\end{equation*}
But, 
$$
d_S\mathcal{L}S-2N\beta_{\max}S\le_1 \frac{dS}{dt}  \quad \forall\ t>0.
$$
Therefore, in view of \eqref{S-01} and the monotonicity of $\{e^{t\mathcal{L}}\}$, we have that 
$$
\frac{\eta_0}{2k}e^{-2N\beta_{\max}} e^{d_S\mathcal{L}}B_{j_t}\le_1 S(t+1) \quad \forall\ t\ge t_0.
$$
Therefore, setting ${B}_{j}^{d_S}:=e^{d_S\mathcal{L}}B_j$ for each $j=1,\cdots,k$  and ${B}^{*,d_S}:=(\min_{j\in\Omega}{B}_{j,1}^{d_S},\cdots, \min_{j\in\Omega}{B}_{j,1}^{d_S})^T$, then 
$$ 
\frac{\eta_0}{2k}{B}^{*,d_S}\le_1 S(t)\quad \forall\ t\ge t_0+1,
$$
which implies that \eqref{T1-e0} holds since $ {\bf 0}\ll_1  {B}^{*,d_S}$ by \eqref{Z8-2-2}, and ${B}^{*,d_S}$ is independent of the initial data.

\medskip

\quad {\rm (ii)} The fact that the DFE is linearly stable when $\mathcal{R}_0(N)<1$ and unstable when $\mathcal{R}_0(N)>1$ follows from standard results, (see for example \cite[Theorem 2]{DW2002}). Next, we proceed to prove assertions {\rm (ii-1)-(ii-3)}.

\noindent {\rm (ii-1)} First, suppose that $\mathcal{R}_{0,l}(N)< \frac{1}{k}$ for some $l=1,2$. 
Hence, in view of \eqref{R_0-l}, we have that 
$$
N\rho\big(\mathcal{F}_l\mathcal{V}_l^{-1}\big)< 1,
$$
which is equivalent to 
\begin{equation*}
   \lambda_l:= \lambda_*\big(d_l\mathcal{L}+{\rm diag}\big(N\beta_l-\gamma_l\big)\big)<0.
\end{equation*}
Now, let $(S(t),I_1(t),I_2(t))$ be a solution of \eqref{model-eq1} with initial data in $\mathcal{E}$. From \eqref{Eq1:1} and \eqref{model-eq2}, we have that 
\begin{equation}\label{S1}
\frac{dI_l}{dt}\le_1 d_l\mathcal{L}I_l+(N\beta_{l}-\gamma_l)\circ I_l \quad \forall\ t\ge 0.
\end{equation}
Hence, if $E_{l}^*$ denotes the positive eigenvector associated with $\lambda_l$ with $\min_{j\in\Omega}E^*_{l,j}=1$, it follows from \eqref{S1} and the comparison principle that 
$$ 
I_{l}(t)\le_1 \|I_l(0)\|_{\infty}e^{t\lambda_l}E_{l}^*\to {\bf 0} \quad \text{as} \quad t\to\infty
.$$
As result, if $\mathcal{R}_0(N)<1$, then $\sum_{l=1}^2\|I_l(t)\|\to 0$ as $t\to\infty$. Moreover, it follows from \eqref{Z9} and the fact that $\sum_{j\in\Omega}S_j=N-\sum_{j\in\Omega}\sum_{l=1}^2I_{l,j}$ that 
$$
\lim_{t\to\infty}\Big\|S(t)-\frac{N}{k}{\bf 1}\Big\|=\lim_{t\to\infty}\Big\|S(t)-\frac{N-\sum_{j\in\Omega}\sum_{l=1}^lI_{l,j}}{k}{\bf 1}\Big\|=\lim_{t\to\infty}\|\widehat{S}(t)\|=0,
$$ 
which yields the desired result.

\quad Next, suppose that $\mathcal{R}_{0,l}=\frac{1}{k}$ for some $l=1,2$. 
For every $t\ge 0$, let $c_{l}(t)$ be given by 
$$
c_l(t)=\inf\{c>0\ :\ I_{l}(t)\le_1 c E^*_l\}.
$$
Hence, since \eqref{S1} holds,  by the comparison principle, we deduce that $c_l(t)$ is nonincreasing in $t\ge 0$. Thus 
$$ 
c_l^{\infty}=\lim_{t\to\infty}c_l(t)=\inf_{t\ge 0}c_l(t)\in[0,c_l(0)].
$$
It is clear  from the definition of $c_l(t)$ that 
\begin{equation}\label{S3}
    I_l(t)\le_1 c_l(t)E^*_l \quad \forall\ t\ge 0.
\end{equation}
We claim that 
\begin{equation}\label{S2}
    c_l^{\infty}=0.
\end{equation}

Indeed, since $\min_{j\in\Omega}E^*_{l,j}=1$, it follows from the definition of $c_l(t)$ that  for each $t\ge 0$, there is $j_t\in\Omega$ such that
\begin{equation}\label{S5}
   I_{l,j_t}(t)\ge c_l(t)E^*_{l,j_t}\ge c_{l}(t) \ge c_l^{\infty},
\end{equation}
which implies that 
$$
\sum_{j\in\Omega}\sum_{p=1}^2I_{p,j}(t)\ge c_l^{\infty}\quad \forall\ t\ge 0.
$$
Therefore, by \eqref{Eq1:1},
$$
\sum_{j\in\Omega}S_j(t)=N-\sum_{j\in\Omega}\sum_{p=1}^2I_{p,j}(t)\le N-c_{l}^{\infty}\quad \forall\ t\ge 0.
$$
This in turn implies that $S(t)\le (N-c_{l}^{\infty}){\bf 1}$ for all $t\ge 0$. Therefore,
\begin{equation*}
\frac{dI_p}{dt}\le_1 d_p\mathcal{L}I_p+((N-c_l^{\infty})\beta_{p}-\gamma_l)\circ I_p\le_1 d_p\mathcal{L}I_p-c_l^{\infty}\beta_{\min}I_p +(N\beta_p-\gamma_p)\circ I_p \quad \forall\ t\ge 0, 
 p=1,2.
\end{equation*}
Therefore, 
$$
I_p(t)\le e^{-tc_l^{\infty}\beta_{\min}}\|I_p(0)\|_{\infty}E^*_{p}, \quad p=1,2, \ t\ge 0.
$$
This together with \eqref{S5} implies that 
$$
c_l^{\infty}\le e^{-tc_{l}^{\infty}\beta_{\min}}\|I_l(0)\|_{\infty}\|E^*_l\|_{\infty}\quad \forall\ t\ge 0,
$$
which implies that $c_l^{\infty}=0$. So \eqref{S2} holds.  Now, from \eqref{S2} and \eqref{S3} we have that $\|I_l(t)\|\to 0$ as $t\to\infty$. 
Therefore, if $\mathcal{R}_0(N)=\frac{1}{k}$, we can now proceed as in the previous case to establish that  $\sum_{l=1}^2I_l(t)\|\to 0$ and  $\Big\|S(t)-\frac{N}{k}{\bf 1}\Big\|\to 0$ as $t\to\infty$. This completes the proof of {\rm (ii-1)}.

\medskip

\noindent{\rm (ii-2)} Suppose that $\mathcal{R}_{0,l}(N)<1$ for some $l=1,2$.  Then $\lambda_*(d_l\mathcal{L}+{\rm diag}(\frac{N}{k}\beta_l-\gamma_l))<0$. Therefore, by the continuity of the principal eigenvalue with respect to parameters, there is $\varepsilon_l>0$ such that 
\begin{equation*}
   \lambda^*_l:= \lambda_*\Big(d_l\mathcal{L}+{\rm diag}\Big(\frac{(N+\varepsilon_l)}{k}\beta_l-\gamma_l\Big)\Big)<0.
\end{equation*}
Let $E_l^*$ denote the eigenvector of $\lambda_l^*$ with $\min_{j\in\Omega}E^*_{l,j}=1$ and set
\begin{equation}\label{d-l-expression}
    d^l:=\frac{3Nk\sqrt{k}(\gamma_{\max}+N\beta_{\max})}{|\tilde{d}_*|\varepsilon_l}.
\end{equation}
We now show that the desired result holds for any $d_S>d^l$. So fix $d_S>d^l$. Let $(S(t),I_1(t),I_2(t))$ be solution of \eqref{model-eq1} with a positive initial data in $\mathcal{E}$.  Since $\sum_{j\in\Omega}\sum_{l=1}^lI_{l,j}(t)\le N$ for every $t\ge 0$, it follows from \eqref{Z9} that 
$$
\limsup_{t\to\infty}\Big\|S(t)-\frac{\sum_{j\in\Omega}S_j(t)}{k}{\bf 1}\Big\|\le\frac{2N\sqrt{k}(\gamma_{\max}+\beta_{\max}N)}{|\tilde{d}_*|d_S}=\frac{2d^l\varepsilon_l}{3kd_S}<\frac{2}{3k}\varepsilon_l.
$$
Recalling that $\sum_{j\in\Omega}S_j=N-\sum_{j\in\Omega}\sum_{p=1}^2I_{p,j}$, then there is $t_1\gg 1$ such that 
$$
S(t)-\frac{\big(N-\sum_{j\in\Omega}\sum_{p=1}^2I_{p,j}\big)}{k}{\bf 1}\le_1 \frac{2\varepsilon_l}{3k}{\bf 1}\quad \forall\ t>t_1.
$$
This implies that 
\begin{align*}
\frac{dI_l}{dt}\le_1 & d_l\mathcal{L}I_l+\Big(\frac{\big(N+\varepsilon_l-\sum_{j\in\Omega}\sum_{p=1}^2I_{p,j}\big)}{k}\beta_l-\gamma_l\Big)\circ I_l\cr 
\le_1 &
d_l\mathcal{L}I_l+\Big(\frac{(N+\varepsilon_l)}{k}\beta_l-\gamma_l\Big)\circ I_l\quad \forall\ t>t_1.
\end{align*}
Hence, by the comparison principle,
$$
I_l(t)\le_1\|I_l(t_1)\|_{\infty}e^{(t-t_1)\lambda_l^*}E_l^*\to {\bf 0}\quad \text{as}\ t\to\infty.
$$
In particular, if $\mathcal{R}_0(N)<1$, taking $d^*=\max\{d^1,d^2\}$, where $d^l$ is given by \eqref{d-l-expression}. We have that $\sum_{l=1}^2\|I_l(t)\|\to 0$ as $t\to\infty$ for every $d_S>d^*$. Moreover, we can proceed as in the proof of {\rm (ii-1)} to prove that $(S(t),I_1(t),I_2(t))\to (\frac{N}{k}{\bf 1},{\bf 0},{\bf 0})$ as $t\to\infty$ for every $d_S>d^*$.

\medskip

\noindent {\rm (ii-3)} Suppose that $\mathcal{R}_{0,l}(N)>1$ for each $l=1,2$. 
 We proceed in four steps. 

\noindent{\bf Step 1.} In the current step, we show that there is a positive constant $m_{\rm up}>0$ such that 
\begin{equation}\label{A1-1}
\liminf_{t\to\infty}\sum_{j\in\Omega}\sum_{l=1}^2I_{l,j}(t)\ge m_{\rm up}
\end{equation}
for any solution $(S(t),I_1(t),I_2(t))$ of \eqref{model-eq1} with a positive initial in $\mathcal{E}$.  To see this, first recall that the set  $\mathcal{E}$ is compact, and invariant for the semiflow generated by classical solution of \eqref{model-eq1}. Now define the mapping $\Xi : \mathcal{E}\to [0,\infty)$ by
$$ 
\Xi(S,I_1,I_2)=\sum_{j\in\Omega}\sum_{l=1}^2I_{l,j}\quad \forall\ (S,I_1,I_2)\in\mathcal{E}.
$$
Clearly, the mapping $\Xi$ is continuous. Furthermore, $\Xi(S(t),I_1(t),I_2(t))>0$ for every $t>0$ whenever $(S(0),I_1(0),I_2(0))\in\mathcal{E}$ and satisfies $\Xi((S(0),I_1(0),I_2(0))>0$. Note from \eqref{Z5} that 
$$ 
\limsup_{t\to\infty}\Xi(S(t),I_1(t),I_2(t))\ge \min\{\sigma_1^*,\sigma_2^*\}\quad \forall\ (S(0),I_1(0),I_2(0))\in \mathcal{E}, \ \Xi(S(0),I_1(0),I_2(0))>0.
$$ 
Then  applying   persistence theory \cite[Theorem 5.2]{ST2011}, we deduce that  \eqref{A1-1} holds.

\noindent{\bf Step 2.} In the current step, we show that there is a positive $m_{\rm low}>0$ constant such that 
\begin{equation}
\liminf_{t\to\infty}\min_{j\in\Omega}\sum_{l=1}^2I_{l,j}(t)\ge m_{\rm low}
\end{equation}
for any solution $(S(t),I_1(t),I_2(t))$ of \eqref{model-eq1} with a positive initial. To this end, let $(S(t),I_1(t),I_2(t))$ be a solution of \eqref{model-eq1} with a positive initial data. Note that
\begin{equation*}
       d_l\mathcal{L}I_l-\gamma_{\max}I_l\le_1  \frac{dI_l(t)}{dt}\quad t>0,\ l=1,2,
\end{equation*}
which,  thanks to the positivity of $\{e^{t\mathcal{L}}\}_{t\ge 0}$,  implies that 
\begin{equation*}
      e^{-\gamma_{\max}t}e^{d_lt\mathcal{L}}I_l(t')\le_1  I_l(t+t') \quad t>0, \ t'>0,\ l=1,2.
\end{equation*}
Thus, setting $\tilde{I}(t)=\sum_{l=1}^2I_l(t)$, we obtain
\begin{equation}\label{A1-2}
e^{-\gamma_{\max}t}e^{\min\{d_1,d_2\}t}\tilde{I}(t')\le_1 \tilde{I}(t+t')\quad\forall\  t>0,\ t'>0.
\end{equation}
Next, thanks to \eqref{A1-1}, there is $t_0'>0$ such that 
\begin{equation}\label{DP2}
\sum_{j\in\Omega}\tilde{I}_j(t')\ge \frac{m_{\rm up}}{2}\quad \forall\ t'\ge t_0'.
\end{equation}
By  \eqref{DP2} and  \eqref{DP1}, 
$$
\frac{m_{\rm up}}{2}\le \sum_{j\in\Omega}\sum_{l=1}^2I_{l,j}(t)\le k\sum_{l=1}^2\|I_l(t)\|_{\infty}\le kc_*\sum_{l=1}^2\min_{j\in\Omega}I_{l,j}(t)\quad \forall\ t>t_0'.
$$
As a result, 
$$
\liminf_{t\to\infty}\sum_{l=1}^2\min_{j\in\Omega}I_{l,j}(t)\ge \frac{m_{\rm up}}{2kc_*}>0.
$$
Therefore, \eqref{Z2} holds for $m_{\rm low}:=\frac{m_{\rm up}}{2kc_*}$.

\noindent {\bf Step 3.} Here, we show that 
\begin{equation}\label{A1-5}
    \limsup_{t\to\infty}S_j(t)\le s_{\max}
\end{equation}
for any solution $(S(t),I_1(t),I_2(t))$ of \eqref{model-eq1} with a positive initial data. Let $(S(t),I_1(t),I_2(t))$ be a solution of \eqref{model-eq1} with a positive initial data. Thanks to \eqref{T1-e1}, without loss of generality, we may suppose that 
\begin{equation}\label{A1-6}
    \min_{j\in\Omega}\sum_{l=1}^2I_{l,j}(t)\ge \frac{m_*}{2}\quad \forall\ t\ge 0.
\end{equation}
Observe from \eqref{model-eq2} that $S(t)$ satisfies
\begin{equation*}
    \frac{dS(t)}{dt}\le_1 d_S\mathcal{L}S+\sum_{l=1}^2(S_{\max}-S)\circ\beta_l\circ I_l\quad t>0,
\end{equation*}
where $S_{\max}=(s_{\max},\cdots,s_{\max})^T$. Next set
$$
\overline{S}(t)=S_{\max}+e^{-\frac{m_*\beta_{\min}}{2}t}S(0)\quad \forall\ t\ge 0.
$$
 Noting from  \eqref{A1-6} that
 $$
 {\bf 0}\le_1\Big(\beta_{\min}\min_{j\in\Omega}\sum_{l=1}^2I_{l,j}-\frac{m_*\beta_{\min}}{2}\Big){\bf 1}\le \sum_{l=1}^2\beta_{l}\circ I_l-\frac{m_*\beta_{\min}}{2}{\bf 1}, \quad\ t>0,
 $$
 then
\begin{align*}
    {\bf 0}\le_1 & e^{-\frac{m_*\beta_{\min}}{2}t}\Big(\sum_{l=1}^2\beta_l\circ I_l-\frac{m_*\beta_{\min}}{2}{\bf 1}\Big)\circ S(0)\cr 
    =& \frac{d\overline{S}(t)}{dt}-d_S\mathcal{L}\overline{S}(t)-\sum_{l=1}^2(S_{\max}-\overline{S}(t))\circ\beta_l\circ I_l\quad \forall\ t>0.
\end{align*}
It then follows from the positivity of $\{e^{t\mathcal{L}}\}_{t\ge 1}$ and the fact that $\overline{S}(0)>S(0)$ that 
$$
S(t)\le_1 \overline{S}(t)=S_{\max}+e^{-\frac{m_*\beta_{\min}}{2}t}S(0)\quad t>0,
$$
from which we deduce that $\limsup_{t\to\infty}\max_{j\in\Omega}S(t)\le \max_{j\in\Omega}S_{\max}=s_{\max}$.

\noindent{\bf Step 4.} Finally, we show that \begin{equation}
    \liminf_{t\to\infty}S_j(t)\ge s_{\min}
\end{equation}
for any solution $(S(t),I_1(t),I_2(t))$ of \eqref{model-eq1} with a positive initial data. We proceed as in step 3. This time we note that \begin{equation*}
   d_S\mathcal{L}S+\sum_{l=1}^2(S_{\min}-S)\circ\beta_l\circ I_l\le_1 \frac{dS(t)}{dt} \quad t>0,
\end{equation*}
where $S_{\min}=(s_{\min},\cdots,s_{\min})^T$, and that the function 
$$ 
\underline{S}(t)=S_{\min}-M_0e^{-N\beta_{\max}t}\quad t\ge 0,
$$
where $M_0\in \mathbb{R}_+^k$ is chosen such that $S(0)>S_{\min}-M_0$, 
satisfies $\underline{S}(0)\le S(0)$ and 
\begin{align*}
    \frac{d\underline{S}(t)}{dt}-d_S\mathcal{L}\underline{S}(t)-\sum_{l=1}^2(S_{\min}-\underline{S}(t))\circ\beta_l\circ I_l=& e^{-\frac{\beta_{\min}m_*}{2}t}\Big(\frac{\beta_{\min}m_*}{2}M_0-\sum_{l=1}^2M_0\circ\beta_l\circ I_l\Big)\cr 
    \le_1 & e^{-\frac{\beta_{\min}m_*}{2}t}(\frac{\beta_{\min}m_*}{2}-\frac{\beta_{\min}m_*}{2})M_0\cr 
    = & {\bf 0} \quad \forall\ t>0.
\end{align*}
Thus, we deduce that $S(t)\ge \underline{S}(t)$ for all $t\ge 0$, which implies that $\liminf_{t\to\infty}\min_{j\in\Omega}S(t)\ge \min_{j\in\Omega}S_{\min}=s_{\min}$. This completes the proof of {\rm (iii)}.
\end{proof}

Thanks to the proof of Theorem \ref{T1}-{\rm (iii)}, we can state the following result on the persistence of the disease for a single strain model.

\begin{prop}\label{prop1} Suppose that $\mathcal{R}_{0,1}(N)>1$. Then  there is a positive number $m_{1,*}>0$ such that for every solution $(S(t),I_{1}(t),I_2(t))$ with a  initial data in $\mathcal{E}$ satisfying $\|I_1(0)\|>0$ and $\|I_2(0)\|_{\infty}=0$, 
\begin{equation}\label{P1-e1}
    \liminf_{t\to \infty}\min_{j\in\Omega}I_{1,j}(t)\ge m_{1,*}.
\end{equation}
Furthermore,
\begin{equation}\label{P1-e2}
    \limsup_{t\to\infty}\max_{j\in\Omega}S_j(t)\le r_{1,\max} \quad \text{and}\quad \liminf_{t\to\infty}\min_{j\in\Omega}S_j(t)\ge r_{1,\min}.
\end{equation}
    
\end{prop}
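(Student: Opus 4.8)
The plan is to follow the four-step scheme in the proof of Theorem \ref{T1}-{\rm (ii-3)}, specialized to the single-strain situation. Since $\|I_2(0)\|_{\infty}=0$, we have $I_2(t)\equiv{\bf 0}$, so the orbit remains on the compact invariant face $\mathcal{E}_1=\{(S,I_1,I_2)\in\mathcal{E}:I_2={\bf 0}\}$, and, using $\gamma_1=\beta_1\circ r_1$, system \eqref{model-eq1} reduces to
\begin{equation*}
\frac{dS}{dt}=d_S\mathcal{L}S+\beta_1\circ(r_1-S)\circ I_1,\qquad \frac{dI_1}{dt}=d_1\mathcal{L}I_1+(\beta_1\circ S-\gamma_1)\circ I_1.
\end{equation*}
Every step of the proof of Theorem \ref{T1}-{\rm (ii-3)} then carries over with the sum $\sum_{l=1}^2$ collapsed to the single index $l=1$.

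First I would establish the persistence bound \eqref{P1-e1}. Because $\mathcal{R}_{0,1}(N)>1$, Lemma \ref{lem2} supplies weak uniform persistence, $\limsup_{t\to\infty}\sum_{j\in\Omega}I_{1,j}(t)\ge\sigma_1^*$, for every orbit with $\|I_1(0)\|_{\infty}>0$. Invoking the abstract persistence theorem \cite[Theorem 5.2]{ST2011} on the compact invariant set $\mathcal{E}_1$, with persistence functional $\Xi(S,I_1,{\bf 0})=\sum_{j\in\Omega}I_{1,j}$, upgrades this to strong uniform persistence: there is $m_{1,{\rm up}}>0$ with $\liminf_{t\to\infty}\sum_{j\in\Omega}I_{1,j}(t)\ge m_{1,{\rm up}}$. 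The Harnack inequality \eqref{DP1} for $I_1$ then converts this into a pointwise bound, since for $t>1$ one has $\tfrac{m_{1,{\rm up}}}{2}\le\sum_{j\in\Omega}I_{1,j}(t)\le k\|I_1(t)\|_{\infty}\le kc_*\min_{j\in\Omega}I_{1,j}(t)$, giving \eqref{P1-e1} with $m_{1,*}=\tfrac{m_{1,{\rm up}}}{2kc_*}$.

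Next I would prove the susceptible bounds \eqref{P1-e2} by comparison. By \eqref{P1-e1} and a time shift we may assume $\min_{j\in\Omega}I_{1,j}(t)\ge\tfrac{m_{1,*}}{2}$ for all $t\ge0$. Since $r_{1,\min}{\bf 1}\le_1 r_1\le_1 r_{1,\max}{\bf 1}$ and $I_1\ge_1{\bf 0}$, the reaction term $\beta_1\circ(r_1-S)\circ I_1$ is sandwiched between $\beta_1\circ(r_{1,\min}{\bf 1}-S)\circ I_1$ and $\beta_1\circ(r_{1,\max}{\bf 1}-S)\circ I_1$. Using $\mathcal{L}{\bf 1}={\bf 0}$, the spatially homogeneous functions $\overline{S}(t)=r_{1,\max}{\bf 1}+\|S(0)\|_{\infty}e^{-ct}{\bf 1}$ and $\underline{S}(t)=r_{1,\min}{\bf 1}-M_0e^{-ct}{\bf 1}$, with $c=\tfrac{\beta_{\min}m_{1,*}}{2}$ and $M_0>0$ chosen so that $\underline{S}(0)\le_1 S(0)\le_1\overline{S}(0)$, are respectively a super- and a subsolution of the cooperative linear $S$-equation; the decay rate is admissible precisely because $c=\beta_{\min}\tfrac{m_{1,*}}{2}\le\beta_{1,j}\tfrac{m_{1,*}}{2}\le\beta_{1,j}I_{1,j}(t)$ for every $j$. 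The comparison principle yields $\underline{S}(t)\le_1 S(t)\le_1\overline{S}(t)$, and letting $t\to\infty$ gives \eqref{P1-e2}.

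The structural steps are routine transcriptions of Theorem \ref{T1}-{\rm (ii-3)}, so the hard part is not any single estimate but rather identifying why the susceptible bounds sharpen to $r_{1,\max},r_{1,\min}$ in place of $s_{\max},s_{\min}$: this is possible only because strain-2 is absent, so a single ratio vector $r_1$ enters the reaction term and the homogeneous comparison functions can be centered at $r_{1,\max}{\bf 1}$ and $r_{1,\min}{\bf 1}$. A secondary point requiring care is that the persistence machinery must be run on the invariant face $\{I_2={\bf 0}\}$, so that $\Xi=\sum_{j}I_{1,j}$ is the correct functional and the weak persistence delivered by Lemma \ref{lem2} is exactly the input needed for \cite[Theorem 5.2]{ST2011}.
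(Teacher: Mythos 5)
Your proposal is correct and is essentially the paper's own argument: the paper states Proposition \ref{prop1} as a direct consequence of the proof of Theorem \ref{T1}-{\rm (ii-3)}, and your write-up is exactly that specialization — weak persistence from Lemma \ref{lem2} on the invariant face $\mathcal{E}_1$, the upgrade via \cite[Theorem 5.2]{ST2011}, the Harnack bound \eqref{DP1}, and the comparison functions recentered at $r_{1,\max}{\bf 1}$ and $r_{1,\min}{\bf 1}$ (which is precisely why $s_{\max},s_{\min}$ sharpen to $r_{1,\max},r_{1,\min}$). Your verification that the decay rate $c=\tfrac{\beta_{\min}m_{1,*}}{2}$ is dominated by $\beta_{1,j}I_{1,j}(t)$ after a time shift matches the paper's Steps 3--4, so nothing is missing.
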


\subsection{Proofs of Theorems \ref{T2} and \ref{T3}}

\begin{lem}\label{lemA-1} Suppose that $\mathfrak{R}_{1,j}=\mathfrak{R}_{1,i}$ for each $i,j\in\Omega$, $\min_{l=1,2}\mathcal{R}_{0,l}(N)>1$, and $\mathfrak{R}_{1,\min}>\mathfrak{R}_{2,\max}$. Then, there exists $\eta_1>0$ such that 
\begin{equation}\label{lemA-eq}
    \liminf_{t\to\infty}\min_{j\in\Omega}I_{1,j}\ge \eta_1
\end{equation}
    for every solution $(S(t),I_1(t),I_2(t))$ of \eqref{model-eq1} with  initial data in $\mathcal{E}$ satisfying $\|I_1(0)\|_{\infty}>0$.
\end{lem}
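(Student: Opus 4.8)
The plan is to read \eqref{lemA-eq} as a \emph{uniform persistence} statement for strain-1 and to deduce it from the abstract persistence machinery of \cite[Theorem 5.2]{ST2011}. I would work on the compact invariant set $\mathcal{E}$ with persistence function $p(S,I_1,I_2)=\sum_{j\in\Omega}I_{1,j}$, and let $\mathcal{X}_0:=\{(S,I_1,I_2)\in\mathcal{E}:I_1=\mathbf{0}\}$ be the (invariant) extinction boundary for strain-1. On $\mathcal{X}_0$ the flow coincides with the single-strain-2 system; since $\mathcal{R}_{0,2}(N)>1$, the strain-2 analogue of Proposition \ref{prop1} shows strain-2 is uniformly persistent there, so the only invariant sets that can appear in $\omega$-limit sets of boundary orbits are the DFE $\mathbf{E}^0$ and the strain-2 endemic equilibria in $\mathcal{E}_2^*$. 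The proof then reduces to showing that each such boundary invariant set is uniformly weakly repelling in the $I_1$-direction; once uniform (weak, hence strong) persistence of $p$ is obtained, the Harnack-type inequality \eqref{DP1} for $I_1$ upgrades $\liminf_t\sum_j I_{1,j}(t)>0$ to $\liminf_t\min_j I_{1,j}(t)\ge\eta_1$, which is \eqref{lemA-eq}.

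Repelling at $\mathbf{E}^0$ is immediate: the $I_1$-linearization there is $d_1\mathcal{L}+\mathrm{diag}(\tfrac{N}{k}\beta_1-\gamma_1)$, whose spectral bound is positive precisely because $\mathcal{R}_{0,1}(N)>1$. The crux, which I expect to be the main obstacle, is repelling at an arbitrary strain-2 EE $\mathbf{E}_2^*=(S_2^*,\mathbf{0},I_2^*)$, where the relevant operator is $A_1:=d_1\mathcal{L}+\mathrm{diag}(\beta_1\circ S_2^*-\gamma_1)$ and I must show $\lambda_*(A_1)>0$. Here the hypotheses enter decisively. First, a minimum-principle argument at equilibrium: evaluating the $S$-equation $0=d_S\mathcal{L}S_2^*+\beta_2\circ(r_2-S_2^*)\circ I_2^*$ at a patch $j_0$ where $S_2^*$ is minimal, and using $(\mathcal{L}S_2^*)_{j_0}=\sum_i L_{j_0,i}(S_{2,i}^*-S_{2,j_0}^*)\ge0$ together with $I_2^*\gg_1\mathbf{0}$, forces $S_{2,j_0}^*\ge r_{2,j_0}$, hence $S_{2,j}^*\ge r_{2,\min}$ for all $j\in\Omega$. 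Since $\mathfrak{R}_1$ is constant we write $\beta_1=\mathfrak{R}_1\gamma_1$ with scalar $\mathfrak{R}_1=1/r_1$, so $(\beta_1\circ S_2^*-\gamma_1)_j=\gamma_{1,j}\mathfrak{R}_1(S_{2,j}^*-r_1)$; and the hypothesis $\mathfrak{R}_{1,\min}>\mathfrak{R}_{2,\max}$ is exactly $r_1<r_{2,\min}$, whence $S_{2,j}^*-r_1\ge r_{2,\min}-r_1>0$ for every $j$.

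Consequently every diagonal entry of $\mathrm{diag}(\beta_1\circ S_2^*-\gamma_1)$ is strictly positive, and since $\mathcal{L}\mathbf{1}=\mathbf{0}$ the Collatz--Wielandt (sub-solution) bound gives
\begin{equation*}
\lambda_*(A_1)\ \ge\ \min_{j\in\Omega}\big(A_1\mathbf{1}\big)_j\ =\ \min_{j\in\Omega}\gamma_{1,j}\mathfrak{R}_1\big(S_{2,j}^*-r_1\big)\ >\ 0.
\end{equation*}
To convert this eigenvalue positivity into weak repelling I would argue by contradiction in the style of Lemma \ref{lem1}: if some orbit with $\|I_1(0)\|_\infty>0$ had $\omega$-limit set inside a small neighborhood of $\mathbf{E}_2^*$, then $S(t)$ stays close to $S_2^*$, so for large $t$ the strain-1 equation satisfies $d_1\mathcal{L}I_1+(\beta_1\circ S_2^*-\gamma_1-\varepsilon\mathbf{1})\circ I_1\le_1\frac{d}{dt}I_1$ with $\varepsilon$ small enough that the spectral bound remains positive; comparison with the associated linear semigroup, together with $\mathbf{0}\ll_1 I_1(t)$ (guaranteed by $\|I_1(0)\|_\infty>0$ and irreducibility of $\mathcal{L}$), forces $\sum_j I_{1,j}(t)\to\infty$, contradicting \eqref{Eq1:1}; the same scheme disposes of $\mathbf{E}^0$. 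Finally, the covering $\{\mathbf{E}^0\}\cup\mathcal{E}_2^*$ of the boundary invariant set is acyclic, since strain-2 persists on $\mathcal{X}_0$ and orbits can only flow from $\mathbf{E}^0$ toward the strain-2 endemic states, so \cite[Theorem 5.2]{ST2011} applies and yields uniform persistence of $p$; combined with \eqref{DP1} this completes the proof. The delicate points to verify carefully are the isolatedness and acyclicity of this covering when $\mathcal{E}_2^*$ is not known to be a singleton, and the uniformity (independent of the orbit) of the weak-repelling neighborhoods.
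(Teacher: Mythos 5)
Your local spectral analysis is correct and even elegant: the minimum-principle argument at a strain-2 equilibrium does give $S_{2,j}^*\ge r_{2,\min}$ for all $j$ (using $(\mathcal{L}S_2^*)_{j_0}\ge 0$ at a minimizing patch and $I_2^*\gg_1{\bf 0}$), and since $\mathfrak{R}_1$ is constant with $r_1<r_{2,\min}$, the Collatz--Wielandt bound with test vector ${\bf 1}$ (valid because $\mathcal{L}{\bf 1}={\bf 0}$ under {\bf (A1)}) yields $\lambda_*(A_1)\ge\beta_{\min}(r_{2,\min}-r_1)>0$, uniformly over $\mathcal{E}_2^*$. The genuine gap is in the persistence superstructure built on top of it. Your plan requires that the boundary attractor in $\mathcal{X}_0=\{I_1={\bf 0}\}$ be covered by $\{{\bf E}^0\}\cup\mathcal{E}_2^*$, i.e., that $\omega$-limit sets of boundary orbits consist of equilibria. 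Under the hypotheses of Lemma \ref{lemA-1} this is not known: the boundary flow is the single-strain-2 patch system with arbitrary unequal dispersal rates and no structural assumption on $\mathfrak{R}_2$, for which neither uniqueness of the EE nor convergence of orbits to equilibria is available. Indeed, the paper treats exactly this property --- that $\mathcal{E}_2^*\cup\{{\bf E}^0\}$ is the global attractor on $\mathcal{E}_2$ --- as an \emph{additional hypothesis} in Theorem \ref{T5-1}, verified only under {\bf (A2)} via \cite[Theorem 3]{LP2023}. Uniform persistence of strain 2 on the boundary (which you correctly get from $\mathcal{R}_{0,2}(N)>1$) only keeps boundary limit sets away from ${\bf E}^0$; it does not force them to be equilibria, so your proposed covering may fail to cover, and the isolatedness/acyclicity issues you yourself flag when $\mathcal{E}_2^*$ is not a singleton (e.g., a continuum of equilibria) are unresolved obstructions of the same kind. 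In short, the acyclic-covering form of persistence theory cannot be invoked under the lemma's hypotheses.

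The paper's proof avoids any boundary decomposition by establishing uniform \emph{weak} persistence of $\sum_{j\in\Omega}I_{1,j}$ directly, by contradiction: if (after a time translation) $\sum_{j\in\Omega}I_{1,j}(t)\le\eta_0$ for all $t\ge0$, then Proposition \ref{prop1} forces $\|I_2(0)\|_\infty>0$, and \eqref{T1-e1} gives $\min_{j\in\Omega}I_{2,j}(t)\ge m_0$ for large $t$; the explicit subsolution $\underline{S}(t)=\lambda_0 R_{2,\min}-M_0e^{-m_0\beta_{\min}t}$, with $r_1<\lambda_0 r_{2,\min}$ and the constants tuned so that $\beta_{\min}m_0(1-\lambda_0)=\lambda_0\beta_{\max}\eta_0$, then pushes $S(t)$ above $\lambda_0 R_{2,\min}$ up to a decaying error, whence $I_1$ grows exponentially and violates the mass constraint \eqref{Eq1:1}. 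This is precisely the orbitwise, dynamic analogue of your static bound $S_2^*\ge r_{2,\min}{\bf 1}$, but it holds along \emph{every} orbit on which $I_1$ stays small, with no knowledge of boundary $\omega$-limit sets. With weak persistence in hand, \cite[Theorem 5.2]{ST2011} is used only in its weak-to-strong form (no acyclic covering needed), and the Harnack inequality \eqref{DP1} finishes exactly as in your endgame. To repair your argument you would either have to import the boundary-attractor hypothesis of Theorem \ref{T5-1} --- which is not part of Lemma \ref{lemA-1} --- or replace the equilibrium analysis by a uniform lower bound for $S$ over the whole boundary attractor, which is in effect the paper's subsolution computation.
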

\begin{proof} We proceed in two steps. First, note that $\mathcal{R}_{0}(N)=\mathcal{R}_{0,1}(N)>1$. Since $r_1:=r_{1,j}$ is constant in $j\in\Omega$, and $\mathcal{R}_{0,1}>\|\mathfrak{R}_2\|_{\infty}$, then $r_1<\min_{j\in\Omega}r_{2,j}$. So, we can choose $0<\lambda_0<1$ such that  $r_1<\lambda_0\min_{j\in\Omega}r_{2,j}$. Let $\tilde{m}_*=\min\{m_*,m_{1,*}\}$ where $m_*$ and $m_{1,*}$ are given by \eqref{T1-e1} and \ref{P1-e1}, respectively. Set
\begin{equation*}
    \eta_0:=\frac{\Big(\frac{\beta_{\min}}{\lambda_0\beta_{\max}}\Big)\frac{(1-\lambda_0)\tilde{m}_*}{2}}{1+\Big(\frac{\beta_{\min}}{\lambda_0\beta_{\max}}\Big)(1-\lambda_0)}\quad \text{and}\quad m_0=\frac{\tilde{m}_*}{2}-\eta_0.
\end{equation*}
Then
\begin{equation}\label{B1-0}
    m_0>0 \quad \text{and}\quad \beta_{\min}m_0(1-\lambda_0)=\lambda_0\beta_{\max}\eta_0.
\end{equation}
{\bf Step 1.} We proceed by contradiction to establish  that 
\begin{equation}\label{B1-1}
    \limsup_{t\to\infty}\sum_{j\in\Omega}I_{1,j}(t)\ge \eta_0
\end{equation}
for every solution $(S(t),I_1(t),I_2(t))$ of \eqref{model-eq1} with a positive initial data satisfying $I_1\ne 0$. So, suppose that there is a solution $(S(t),I_1(t),I_2(t))$ of \eqref{model-eq1} with a positive initial data satisfying $I_1(0)>(0,\cdots,0)^T$ and
\begin{equation}\label{B1-2}
    \limsup_{t\to\infty}\sum_{j\in\Omega}I_{1,j}(t)<\eta_0.
\end{equation}
Hence, after translation in time, we may suppose that 
\begin{equation}\label{B1-3}
    \sum_{j\in\Omega}I_{1,j}(t)\le \eta_0\quad \forall\ t\ge 0.
\end{equation}
Observe that $S(t)$ satisfies 
\begin{align}\label{B1-4}
  d_S\mathcal{L}S -\beta_{\max}\eta_0 S+(r_2- S)\circ\beta_2\circ I_2\le_1 d_S\mathcal{L}S +(\gamma_1-\beta_1\circ S)\circ I_1+(\gamma_2-\beta_2\circ S)\circ I_2 
   =  \frac{dS(t)}{dt},
\end{align}
where $r_{2,j}=\frac{\gamma_{2,j}}{\beta_{2,j}}$ for each $j\in\Omega$. Now, thanks to \eqref{T1-e1}, \eqref{P1-e1} and \eqref{B1-3}, we have that $\|I_{2}(0)\|_{\infty}>0$ and there is $t_0>0$ such that 
$$
I_{2,j}(t)\ge m_0\quad \forall\ t\ge t_0.
$$
Next, define 
$$ 
\underline{S}(t)=\lambda_0 R_{2,\min}-e^{-m_0\beta_{\min}t}M_0\quad \forall\ t\ge  t_0,
$$
where ${\bf 0}<<_1M_0\in [\mathbb{R}_+]^k$ is fixed and satisfies $\lambda_0R_{2,\min}-e^{-m_0\beta_{\min}t_0}M_2<S(t_0)$.
 Thanks to \eqref{B1-0}, we have 
\begin{align*}
   & \frac{d\underline{S}}{dt}-d_S\mathcal{L}\underline{S}+\beta_{\max}\eta_0\underline{S}-(r_2-\underline{S})\circ\beta_2\circ I_2\cr 
   =& m_0\beta_{\min}M_0e^{-m_0\beta_{\min}t}+\beta_{\max}\eta_0(\lambda_0 R_{2,\min}-M_0e^{-m_0\beta_{\min}t})-(r_{2}-\lambda_0 R_{2,\min}+M_0e^{-m_0\beta_{\min}t})\circ\beta_2\circ I_2\cr 
   =&e^{-m_0\beta_{\min}t}\Big((m_0\beta_{\min}-\beta_{\max}\eta_0)M_0-M_0\circ\beta_2\circ I_2\Big)+\lambda_0\beta_{\max}\eta_0R_{2,\min}-(r_2-\lambda_0 R_{2,\min})\circ\beta_2\circ I_2\cr
   \le & e^{-m_0\beta_{\min}t}\Big((m_0\beta_{\min}-\beta_{\max}\eta_0)M_0-\beta_{\min}m_0M_0\Big)+\beta_{\max}\eta_0\lambda_0R_{2,\min}-\beta_{\min}m_0(r_2-\lambda_0 R_{2,\min})\cr
   \le & e^{-m_0\beta_{\min}t}\Big((m_0\beta_{\min}-\beta_{\max}\eta_0)M_0-\beta_{\min}m_0M_0\Big)+\beta_{\max}\eta_0\lambda_0R_{2,\min}-\beta_{\min}m_0(R_{2,\min}-\lambda_0 R_{2,\min})\cr
   =& e^{-m_0\beta_{\min}t}\Big((m_0\beta_{\min}-\beta_{\max}\eta_0)M_0-\beta_{\min}m_0M_0\Big)+(\beta_{\max}\eta_0\lambda_0-\beta_{\min}m_0(1-\lambda_0))R_{2,\min}\cr
   =& -e^{-m_0\beta_{\min}t}\beta_{\max}\eta_0M_0<_1{\bf 0}\qquad \forall\ t>t_0.
\end{align*}
Hence, in view of \eqref{B1-4}, $S(t)\ge \underline{S}(t)$ for all $t\ge t_0$. Thus, $I_1(t)$ satisfies
\begin{equation*}
    d_1\mathcal{L}I_1(t)+(\lambda_0R_{2,\min}-R_{1,\min}-e^{-m_0\beta_{\min}t}M_0)\circ\beta_1\circ I_{1}\le_1 \frac{dI_{1}(t)}{dt} \quad \forall\ t\ge t_0.
\end{equation*}
As a result, since $\lambda_0R_{2,\min}>R_{1,\min}$, $e^{-m_0\beta_{\min}t}\to 0$ as $t\to\infty$, and $I_{1}(t_0)>>0$, then $\|I_1(t)\|_{\infty}\to \infty$ as $t\to\infty$. This clearly, contradicts the fact that $\|I_1(t)\|_{\infty}\le N$ for all $t\ge 0$. Therefore, \eqref{B1-1} must hold.
 
\medskip

\noindent{\bf Step 2.} We complete the proof of \eqref{lemA-eq}. Indeed, first recall that the set  $\mathcal{E}$ is compact, and invariant for the semiflow generated by classical solution of \eqref{model-eq1}. Now define the mapping $\Xi : \mathcal{E}\to [0,\infty)$ by
$$ 
\Xi(S,I_1,I_2)=\sum_{j\in\Omega}I_{1,j}\quad \forall\ (S,I_1,I_2)\in\mathcal{E}.
$$
Clearly, the mapping $\Xi$ is continuous. Furthermore, $\Xi(S(t),I_1(t),I_2(t))>0$ for every $t>0$ whenever $(S(0),I_1(0),I_2(0))\in\mathcal{E}$ and satisfies $\Xi((S(0),I_1(0),I_2(0))>0$. Then  applying   persistence theory \cite[Theorem 5.2]{ST2011}, it follows from \eqref{B1-1} that there is some $\eta_{0}^*>0$ such that 
\begin{equation}\label{B1-5}
    \liminf_{t\to\infty}\sum_{j\in\Omega}I_{1,j}(t)\ge \eta_0^*
\end{equation}
for every solution $(S(t),I_1(t),I_2(t))$ of \eqref{model-eq1} with a positive initial data satisfying $I_1\ne 0$. Finally, we can employ \eqref{DP1} to conclude that \eqref{lemA-eq} follows from \eqref{B1-5}.
\end{proof}

\begin{lem}\label{lemA-2} Suppose that $\min_{l=1,2}\mathcal{R}_{0,l}(N)>1$, $\mathfrak{R}_{1,j}=\mathfrak{R}_{1,i}$ for each $i,j\in\Omega$ and $\mathcal{R}_{0,1}(N)<\mathfrak{R}_{2,\min}(N)$. Then, there exists $\eta_1>0$ such that 
\begin{equation}\label{lemA-2-eq}
    \liminf_{t\to\infty}\min_{j\in\Omega}I_{2,j}\ge \eta_1
\end{equation}
    for every solution $(S(t),I_1(t),I_2(t))$ of \eqref{model-eq1} with  initial data in $\mathcal{E}$ satisfying $\|I_2(0)\|_{\infty}>0$.
\end{lem}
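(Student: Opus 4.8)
The plan is to reproduce the contradiction-and-subsolution scheme of Lemma \ref{lemA-1} with the two strains interchanged, after first rewriting the global hypothesis as a pointwise one. Since $\mathfrak{R}_{1,j}$ is constant, set $r_1:=r_{1,j}=\gamma_{1,j}/\beta_{1,j}$, so that $\beta_1=\frac1{r_1}\gamma_1$ and $\mathcal{F}_1=\frac1{r_1}{\rm diag}(\gamma_1)$. Then $\mathcal{F}_1\mathcal{V}_1^{-1}=\frac1{r_1}{\rm diag}(\gamma_1)\big({\rm diag}(\gamma_1)-d_1\mathcal{L}\big)^{-1}=\frac1{r_1}\big({\rm diag}({\bf 1})-d_1\mathcal{L}\,{\rm diag}(\gamma_1)^{-1}\big)^{-1}$. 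Because ${\bf 1}^{T}\mathcal{L}={\bf 0}$, the irreducible Metzler matrix $d_1\mathcal{L}\,{\rm diag}(\gamma_1)^{-1}$ has ${\bf 1}$ as a positive left eigenvector for the eigenvalue $0$, so by Perron--Frobenius its spectral bound is $0$ and all its other eigenvalues $\nu$ have negative real part; consequently $1/(1-\nu)$ has modulus at most $1$, with equality only at $\nu=0$, whence $\rho\big(({\rm diag}({\bf 1})-d_1\mathcal{L}\,{\rm diag}(\gamma_1)^{-1})^{-1}\big)=1$ and $\mathcal{R}_{0,1}(N)=\frac{N}{k}\rho(\mathcal{F}_1\mathcal{V}_1^{-1})=\frac{N}{kr_1}$. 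Since $\mathfrak{R}_{2,\min}(N)=\frac{N}{k r_{2,\max}}$, the hypothesis $\mathcal{R}_{0,1}(N)<\mathfrak{R}_{2,\min}(N)$ is therefore equivalent to the pointwise gap $r_1>r_{2,\max}$, the exact mirror of the inequality $r_1<r_{2,\min}$ that drives Lemma \ref{lemA-1}.

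With this reduction, fix $\lambda_0\in\big(r_{2,\max}/r_1,\,1\big)$ and choose $\eta_0>0$, $m_0>0$ exactly as in Lemma \ref{lemA-1} (with the single-strain-$2$ persistence constant $m_{2,*}$, the strain-$2$ analogue of Proposition \ref{prop1}, in place of $m_{1,*}$), so that $\beta_{\min}m_0(1-\lambda_0)=\lambda_0\beta_{\max}\eta_0$. The heart of the argument is a proof by contradiction of $\limsup_{t\to\infty}\sum_{j\in\Omega}I_{2,j}(t)\ge\eta_0$. Suppose instead that some solution with $\|I_2(0)\|_\infty>0$ satisfies, after a time shift, $\sum_{j\in\Omega}I_{2,j}(t)\le\eta_0$ for all $t\ge0$. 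If $I_1\equiv{\bf 0}$ this is a single-strain-$2$ solution, which persists by the strain-$2$ analogue of Proposition \ref{prop1} and violates $\sum_j I_{2,j}\le\eta_0$; hence $\|I_1(0)\|_\infty>0$, both strains are present, and Theorem \ref{T1}-{\rm (ii-3)} gives $\min_{j\in\Omega}\sum_{l}I_{l,j}(t)\ge m_*$ for large $t$, so that $I_{1,j}(t)\ge m_0$ for all $j$ and all large $t$.

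Using $\gamma_1=\beta_1\circ r_1{\bf 1}$, the susceptible equation then satisfies $d_S\mathcal{L}S-\beta_{\max}\eta_0 S+\beta_1\circ(r_1{\bf 1}-S)\circ I_1\le_1\frac{dS}{dt}$, where the bound $-\beta_{\max}\eta_0 S\le_1(\gamma_2-\beta_2\circ S)\circ I_2$ (valid since $I_{2,j}\le\eta_0$ and $S_j\ge0$) absorbs the small strain-$2$ contribution. I would verify that $\underline S(t):=\lambda_0 r_1{\bf 1}-c\,e^{-m_0\beta_{\min}t}{\bf 1}$, with $c>0$ chosen large enough that $\underline S(t_0)\le_1 S(t_0)$, is a subsolution of this cooperative flow: since $\mathcal{L}{\bf 1}={\bf 0}$, $I_{1,j}\ge m_0$, and $\beta_{\min}m_0(1-\lambda_0)=\lambda_0\beta_{\max}\eta_0$, a direct substitution makes the required inequality hold componentwise. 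The comparison principle then yields $S(t)\ge_1\underline S(t)$, so that, with $\delta:=\tfrac12(\lambda_0 r_1-r_{2,\max})>0$, we have $S_j(t)\ge r_{2,\max}+\delta$ for all $j$ and all large $t$.

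Feeding this into the strain-$2$ equation gives $d_2\mathcal{L}I_2+(\beta_2\circ S-\gamma_2)\circ I_2\le_1\frac{dI_2}{dt}$ with $\beta_{2,j}S_j-\gamma_{2,j}=\beta_{2,j}(S_j-r_{2,j})\ge\beta_{\min}\delta=:\kappa>0$ uniformly in $j$ for large $t$; since $\|I_2(0)\|_\infty>0$ forces ${\bf 0}\ll_1 I_2(t)$, comparison with the solution $\eta\,e^{\kappa(t-t_1)}{\bf 1}$ of $\frac{d}{dt}V=d_2\mathcal{L}V+\kappa V$ forces $\sum_j I_{2,j}(t)\to\infty$, contradicting \eqref{Eq1:1}. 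This proves $\limsup_{t\to\infty}\sum_j I_{2,j}(t)\ge\eta_0$; then, with the continuous functional $\Xi(S,I_1,I_2)=\sum_j I_{2,j}$, the abstract persistence theorem \cite[Theorem 5.2]{ST2011} upgrades this to $\liminf_{t\to\infty}\sum_j I_{2,j}(t)\ge\eta_0^*>0$, and finally the Harnack inequality \eqref{DP1} converts the sum bound into the pointwise estimate \eqref{lemA-2-eq}. I expect the only genuinely new obstacle to be the reduction step---establishing $\mathcal{R}_{0,1}(N)=\frac{N}{kr_1}$ so that the global threshold collapses to the usable gap $r_1>r_{2,\max}$, and hence that strain-$2$'s per-capita growth rate near the strain-$1$ state is bounded below by a positive constant uniformly across patches; once that gap is secured, the sub/supersolution bookkeeping is a routine transcription of Lemma \ref{lemA-1}.
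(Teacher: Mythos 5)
Your proposal is correct and is precisely the ``proper modification'' of Lemma \ref{lemA-1} that the paper invokes (and omits): the same contradiction-and-subsolution scheme with the roles of the strains interchanged, the same constants $\eta_0,m_0,\lambda_0$, and the same upgrade via \cite[Theorem 5.2]{ST2011} and the Harnack estimate \eqref{DP1}. Your explicit reduction step, $\mathcal{R}_{0,1}(N)=\frac{N}{kr_1}$ via Perron--Frobenius (so that $\mathcal{R}_{0,1}(N)<\mathfrak{R}_{2,\min}(N)$ collapses to $r_1>r_{2,\max}$), is exactly the identity the paper itself uses in the proof of Lemma \ref{lemA-3}, and your spatially homogeneous subsolution $\lambda_0 r_1{\bf 1}-ce^{-m_0\beta_{\min}t}{\bf 1}$ is legitimate here since $r_1$ is constant.
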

\begin{proof}The proof follows a proper modification of that of Lemma \ref{A1-2}, hence it is omitted.
\end{proof}

\begin{lem}\label{lemA-3} Suppose that $\mathcal{R}_{0,1}(N)>1$ and $\mathfrak{R}_{1,j}$ is constant in $j\in\Omega$. Then the strain-1 EE is linearly stable if $\mathcal{R}_{0,1}(N)>\mathcal{R}_{0,2}(N)$ and unstable if $\mathcal{R}_{0,1}(N)<\mathcal{R}_{0,2}(N)$.
    
\end{lem}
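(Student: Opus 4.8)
The plan is to exploit the spatial homogeneity of $\mathfrak{R}_1$ to compute the strain-$1$ EE explicitly, and then to read linear (in)stability off a block-triangular Jacobian. First I would observe that $r_{1,j}=\gamma_{1,j}/\beta_{1,j}=1/\mathfrak{R}_{1,j}$ is a constant $r_1=1/\mathfrak{R}_1$, so $r_{1,\min}=r_{1,\max}=r_1$. Applying Proposition~\ref{prop1} to the strain-$1$ EE ${\bf E}_1^*=(S_1^*,I_1^*,{\bf 0})$, viewed as a time-independent single-strain solution with $\|I_1^*\|>0$ and $I_2^*={\bf 0}$, the bounds \eqref{P1-e2} collapse to $S_1^*=r_1{\bf 1}$. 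Substituting into \eqref{model-eq3} and using $\beta_1\circ S_1^*=r_1\beta_1=\gamma_1$, the strain-$1$ equation reduces to $d_1\mathcal{L}I_1^*={\bf 0}$, whence $I_1^*=c{\bf 1}$ with $c=\tfrac{N}{k}-r_1$ forced by $\sum_j(S_{1,j}^*+I_{1,j}^*)=N$. Moreover $\mathcal{V}_1{\bf 1}=\gamma_1$ gives ${\rm diag}(\gamma_1)\mathcal{V}_1^{-1}\gamma_1=\gamma_1$ with $\gamma_1\gg{\bf 0}$, so Perron--Frobenius yields $\rho(\mathcal{F}_1\mathcal{V}_1^{-1})=\mathfrak{R}_1$ and hence $\mathcal{R}_{0,1}(N)=\tfrac{N}{k}\mathfrak{R}_1$; in particular $c>0$ precisely because $\mathcal{R}_{0,1}(N)>1$.

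Next I would linearize \eqref{model-eq2} at ${\bf E}_1^*$. Because $I_2^*={\bf 0}$ and $\beta_1\circ S_1^*=\gamma_1$, the Jacobian acting on perturbations $(u,v_1,v_2)$ takes the block form
\begin{equation*}
J=\begin{pmatrix}
d_S\mathcal{L}-c\,{\rm diag}(\beta_1) & {\bf 0} & {\rm diag}(\gamma_2-r_1\beta_2)\\
c\,{\rm diag}(\beta_1) & d_1\mathcal{L} & {\bf 0}\\
{\bf 0} & {\bf 0} & d_2\mathcal{L}+{\rm diag}(r_1\beta_2-\gamma_2)
\end{pmatrix}=:\begin{pmatrix} J_{11} & {\bf 0} & J_{13}\\ J_{21} & J_{22} & {\bf 0}\\ {\bf 0} & {\bf 0} & J_{33}\end{pmatrix}.
\end{equation*}
Expanding the characteristic determinant along the third block-row gives $\det(J-\lambda I)=\det(J_{11}-\lambda I)\det(J_{22}-\lambda I)\det(J_{33}-\lambda I)$, so $\sigma(J)=\sigma(J_{11})\cup\sigma(J_{22})\cup\sigma(J_{33})$, and each $J_{ii}$ is real symmetric.

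I would then analyze the three blocks. Since $c\beta_{1,j}>0$ for every $j$, strict monotonicity of the principal eigenvalue gives $\lambda_*(J_{11})<\lambda_*(d_S\mathcal{L})=0$, so $\sigma(J_{11})\subset(-\infty,0)$; and $\sigma(J_{22})=\sigma(d_1\mathcal{L})$ consists of a simple $0$ together with negative eigenvalues. Thus the only zero eigenvalue of $J$ is simple, with eigenvector $({\bf 0},{\bf 1},{\bf 0})$. This mode is transverse to $\mathcal{E}$ (its total mass is $\sum_j(0+1+0)=k\neq0$); and since a direct computation gives $({\bf 1}^T,{\bf 1}^T,{\bf 1}^T)J={\bf 0}$, the map $J$ sends $[\mathbb{R}^k]^3$ into the tangent space $T:=\{(u,v_1,v_2):\sum_j(u_j+v_{1,j}+v_{2,j})=0\}$, which is $J$-invariant and complementary to $\mathrm{span}\{({\bf 0},{\bf 1},{\bf 0})\}$. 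Consequently $\sigma(J|_T)=\sigma(J)\setminus\{0\}$, and linear (in)stability of ${\bf E}_1^*$ within $\mathcal{E}$ is governed by $\sigma(J)\setminus\{0\}$, i.e.\ by the sign of $\lambda_*(J_{33})$, the rest of the spectrum being strictly negative.

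Finally I would pin down the sign of $\lambda_*(J_{33})=\lambda_*\big({\rm diag}(r_1\beta_2)-\mathcal{V}_2\big)$. By the next-generation equivalence recorded after \eqref{R_0-l}, this has the sign of $\rho\big({\rm diag}(r_1\beta_2)\mathcal{V}_2^{-1}\big)-1$, and
\begin{equation*}
\rho\big({\rm diag}(r_1\beta_2)\mathcal{V}_2^{-1}\big)=r_1\rho(\mathcal{F}_2\mathcal{V}_2^{-1})=\frac{kr_1}{N}\mathcal{R}_{0,2}(N)=\frac{\mathcal{R}_{0,2}(N)}{\tfrac{N}{k}\mathfrak{R}_1}=\frac{\mathcal{R}_{0,2}(N)}{\mathcal{R}_{0,1}(N)}=\tilde{\mathcal{R}}_2({\bf E}_1^*),
\end{equation*}
so $\lambda_*(J_{33})$ has the same sign as $\mathcal{R}_{0,2}(N)-\mathcal{R}_{0,1}(N)$. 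If $\mathcal{R}_{0,1}(N)>\mathcal{R}_{0,2}(N)$ then $\lambda_*(J_{33})<0$ and all of $\sigma(J)\setminus\{0\}$ is negative, so ${\bf E}_1^*$ is linearly stable; if $\mathcal{R}_{0,1}(N)<\mathcal{R}_{0,2}(N)$ then $\lambda_*(J_{33})>0$, and the associated eigenvector $\psi$ of $J$ lies in $T$ (as $J\psi=\lambda_*(J_{33})\psi\in T$ with $\lambda_*(J_{33})\neq0$), so ${\bf E}_1^*$ is unstable. The main obstacle here is conceptual rather than computational, namely correctly handling the conservation-induced zero eigenvalue: isolating it, verifying its transversality to $\mathcal{E}$, and confirming that the unstable eigenvector genuinely lies in $T$. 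The step that makes the eigenvalue analysis elementary is the explicit identification of ${\bf E}_1^*$ from the spatial homogeneity of $\mathfrak{R}_1$, which renders $S_1^*$ constant and decouples the Jacobian.
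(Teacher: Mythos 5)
Your proposal is correct and follows essentially the same route as the paper: identify the explicit strain-$1$ EE $\big(r_1{\bf 1},(\tfrac{N}{k}-r_1){\bf 1},{\bf 0}\big)$, exploit the decoupled $I_2$-block of the Jacobian, and reduce (in)stability to the sign of $\lambda_*\big(d_2\mathcal{L}+{\rm diag}(r_1\beta_2-\gamma_2)\big)$, which by the next-generation sign equivalence is the sign of $\mathcal{R}_{0,2}(N)/\mathcal{R}_{0,1}(N)-1$. The only difference is packaging: the paper imposes the mass constraint $\sum_j(W_j+\hat{W}_j+\tilde{W}_j)=0$ directly in its eigenvalue problem \eqref{U1} (verifying it for the unstable eigenvector via the symmetry of $\mathcal{L}$), whereas you handle the conservation-induced zero mode by noting $({\bf 1}^T,{\bf 1}^T,{\bf 1}^T)J={\bf 0}$ and restricting $J$ to the tangent space $T$ — the same computation in a different form.
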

\begin{proof} Set $r_{1,\min}=\min_{j\in\Omega}r_{1,j}$. Since $\mathfrak{R}_{1,j}$ is constant in $j\in\Omega$, then $r_{1}=r_{1,\min}{\bf 1}$ and ${\bf E}_1^*:=(r_{1},(\frac{N}{k}-r_{1,\min}){\bf 1}, {\bf 0})$ is the strain-1 EE solution. Linearizing \eqref{model-eq1} at this EE, we obtain the eigenvalue problem

\begin{equation}\label{U1}
    \begin{cases}
        \lambda W=d_S\mathcal{L}W-(\frac{N}{k}-r_{1,\min})\beta_{1}\circ W + (\gamma_2-r_{1,\min}\beta_2)\tilde{W}\cr 
        \lambda \hat{W}= d_1\mathcal{L}\hat{W}+(\frac{N}{k}-r_{1,\min})\beta_1\circ W \cr 
        \lambda \tilde{W}=d_2\mathcal{L}\tilde{W}+(r_{1,\min}\beta_2-\gamma_2)\circ \tilde{W}\cr 
        0=\sum_{j\in\Omega}(W_j+\hat{W}_j+\tilde{W}_j)
    \end{cases}
\end{equation}
in $[\mathbb{R}^k]^3$.
Note that the third equation of \eqref{U1} decouples from the first two equations. Let $(\lambda,(W,\hat{W},\tilde{W}))$ be an eigenpair of \eqref{U1}. If $\tilde{W}\ne {\bf 0}$, then $\lambda$ is an eigenvalue of the symmetric and irreducible matrix $d_2\mathcal{L}+{\rm diag}(r_{1,\min}\beta_2-\gamma_2)$, hence it is a real number. If $\tilde{W}={\bf 0}$, then we have two cases. First, if $W\ne {\bf 0}$, then $\lambda$ is an eigenvalue of the symmetric and irreducible matrix $d_S\mathcal{L}-{\rm diag}((\frac{N}{k}-r_{1,\min})\beta_1)$, hence it is a real number. Next, if $W={\bf 0}$, then $\hat{W}\ne {\bf 0}$ and $\lambda$ is an eigenvalue of the symmetric and irreducible matrix $d_1\tilde{L}$, hence it is a real number. This shows that any eigenvalue of \eqref{U1} is a real number. Now, we proceed in two cases to complete the proof of the lemma.

{\bf Case 1.} Here, suppose that $\mathcal{R}_{0,1}(N)>\mathcal{R}_{0,2}(N)$ and show that ${\bf E}_1^*$ is linearly stable. Since $\mathcal{R}_{0,2}(N)=\frac{N}{k}\rho(\mathcal{F}_2\mathcal{V}_2^{-1})$ (see \eqref{R_0-l}), then $\mathcal{R}_{0,1}(N)>\frac{N}{k}\rho(\mathcal{F}_2\mathcal{V}_2^{-1})$, which is equivalent to  $
\frac{N}{k\mathcal{R}_{0,1}(N)}\rho(\mathcal{F}_2\mathcal{V}_2^{-1})<1.
$  This in turn implies that $\lambda_*(\frac{N}{k\mathcal{R}_{0,1}(N)}\mathcal{F}_2-\mathcal{V}_2)<0.$ Observing that $\mathcal{R}_{0,1}(N)=\frac{N}{kr_{1,\min}}$ (since $\mathfrak{R}_{1,j}$ is constant in $j\in\Omega$), then 
$$ 
\frac{N}{k\mathcal{R}_{0,1}(N)}\mathcal{F}_2-\mathcal{V}_2=d_2\mathcal{L}+{\rm diag}(r_{1,\min}\beta_2-\gamma_2).
$$ 
Therefore 
\begin{equation}\label{U2}
    \lambda_*\big(d_2\mathcal{L}+{\rm diag}(r_{1,\min}\beta_2-\gamma_2)\big)=\lambda_*(\frac{N}{k\mathcal{R}_{0,1}(N)}\mathcal{F}_2-\mathcal{V}_2)<0.
\end{equation}
Now, let $(\lambda, (W,\hat{W},\tilde{W}))$ be an eigenpair of \eqref{U1}. If $\tilde{W}\ne \{\bf 0\}$, then, by \eqref{U2},  $\lambda\le \lambda_*(d_2\mathcal{L}+{\rm diag}(r_{1,\min}\beta_2-\gamma_2))<0$. If $W={\bf 0}$ and $\tilde{W}=0$, then either $W\ne {\bf 0}$ or $W={\bf 0}$. If $W\ne {\bf 0}$, then 
$$
\lambda\le \lambda_*(d_S\mathcal{L}-r_{1,\min}{\rm diag}(\mathcal{R}_{0,1}(N)-1)\beta_1)\le r_{1,\min}(\mathcal{R}_{0,1}(N)-1)\beta_{\min}<0.
$$
Finally, if $W=\tilde{W}={\bf 0}$, then $\hat{W}\ne {\bf 0}$ and satisfies 
$$
\lambda\hat{W}=d_1\mathcal{L}\hat{W}\quad \text{and}\quad \sum_{j\in\Omega}\hat{W}_j=0.
$$
Then $\hat{W}$ is not strictly positive and $\lambda$ is an eigenvalue of $d_1\mathcal{L}$. It then follows from the Perron-Frobenius Theorem that $\lambda<\lambda_*(d_1\mathcal{L})=0$. In view of the above, we see that any eigenvalue of \eqref{U1} is negative, which implies that ${\bf E}_1^*$ is linearly stable.

{\bf Case 2.} Here, suppose that $\mathcal{R}_{0,1}(N)<\mathcal{R}_{0,2}(N)$ and show that ${\bf E}_1^*$ is unstable. By the similar arguments leading to \eqref{U2}, we have that $\lambda:=\lambda_*(d_2\mathcal{L}+{\rm diag}(r_{1,\min}\beta_2-\gamma_2))>0$. Let $\tilde{W}$ be a positive eigenvector associated to  $\lambda$. Since $\lambda>0=\lambda_*(d_S\mathcal{L})$, then $\lambda$ is in the resolvent set of $d_S\mathcal{L}$. Thus, there is a unique $W\in\mathbb{R}^k$ solving the first equation of \eqref{U1}. Similarly, since $\lambda>0=\lambda_*(d_1\mathcal{L})$, there is a unique $\hat{W}$ solving the second equation of \eqref{U1}. Therefore, $(W,\hat{W},\tilde{W})$ solves the first three equations of \eqref{U1}. Moreover, since $\mathcal{L}$ is symmetric, we get
$$
\lambda\sum_{j\in\Omega}(W_j+\hat{W}_J+\tilde{W}_J)=\sum_{i,j}L_{i,j}(d_S(W_{i}-W_{j})+d_1(\hat{W}_i-\hat{W}_j)+d_2(\tilde{W}_i-\tilde{W}_j))=0,
$$
which yields that $\sum_{j\in\Omega}(W_j+\hat{W}_J+\tilde{W}_J)=0$ since $\lambda>0$. Therefore, $(\lambda, (W,\hat{W},\tilde{W}))$ solves \eqref{U2} with ${\bf 0}\ll_1 \tilde{W}$. This shows that ${\bf E}_1^*$ is unstable, since $\lambda>0$.
\end{proof}
\begin{proof}[Proof of Theorem \ref{T2}] Suppose that $\min_{l=1,2}\mathcal{R}_{0,1}(N)>1$ and  $\mathfrak{R}_{1,j}$ is constant in $j\in\Omega$. 

\noindent{\rm (i)} If $\mathcal{R}_{0,1}(N)>\mathcal{R}_{0,2}(N)$, it follows from Lemma \ref{lemA-3} that the strain-1 EE solution, ${\bf E}^*_1=(r_{1},(\frac{N}{k}-r_{1,\min}){\bf 1}, {\bf 0})$, is is linearly stable. Next, suppose that $\mathfrak{R}_{2,\max}<\mathfrak{R}_{1,\min}$.  Let $(S(t),I_1(t),I_2(t))$ be solution of \eqref{model-eq1} with a positive initial data in $\mathcal{E}$. By Lemma \ref{lemA-1}, since $\|I_{1}(0)\|_{\infty}>0$, then there is $t_1>0$ such that 
\begin{equation}\label{U1-1}
    \min_{j\in\Omega}I_{1,j}(t)\ge \frac{\eta_1}{2}\quad \forall\ t>t_1,
\end{equation}
where $\eta_1$ is the positive number given in \eqref{lemA-eq}. Next, we claim that 
\begin{equation}\label{U1-2}
    \lim_{t\to\infty}\|S(t)-r_{1,\min}{\bf 1}\|_{\infty}=0.
\end{equation}
Suppose by contradiction that \eqref{U1-2} is false. Then there is a sequence of positive numbers $\{t_{n}\}_{n\ge 1}$ converging to infinity such that 
\begin{equation}\label{U1-1-1}
    \inf_{n\ge1 }\|S(t_n)-r_{1,\min}{\bf 1}\|_{\infty}>0.
\end{equation}
Since $\|S(t)\|_{\infty}+\|I_1(t)\|_{\infty}+\|I_2(t)\|_{\infty}\le N$ for all $t\ge 0$, by the Arzela-Ascoli theorem, there is a subsequence $\{t_{n_1}\}_{n}$ of $\{t_n\}_{n\ge 1}$ and $(S^{\infty},I_1^{\infty},I_2^{\infty})\in C^1(\mathcal{R}:[\mathbb{R}_+^k]^3)$ such that $(S(t+t_{n_1}),I_1(t+t_{n_1}),I_2(t+t_{n_1}))\to (S^{\infty}(t),I_1^{\infty}(t),I_2^{\infty}(t))$ as $n\to\infty$, for $t$ on compact subsets of $\mathbb{R}$.  
Furthermore, $(S^{\infty},I^{\infty})$ satisfies 
\begin{equation}\label{U1-3}
\begin{cases}
\frac{dS^{\infty}}{dt}=d_S\mathcal{L}S^{\infty}+(r_{1,\min}{\bf 1}-S^{\infty})\circ\beta_1\circ I_1^{\infty}+(r_{2}-S^{\infty})\circ \beta_2\circ I_2^{\infty} & t\in\mathbb{R},\cr 
\frac{dI_1^{\infty}}{dt}=d_1\mathcal{L}I_1^{\infty} +(S^{\infty}-r_{1,\min}{\bf 1})\circ\beta_1\circ I_1^{\infty} & t\in\mathbb{R},\cr
\frac{dI_2^{\infty}}{dt}=d_2\mathcal{L}I_2^{\infty} +(S^{\infty}-r_{2})\circ\beta_2\circ I_2^{\infty} & t\in\mathbb{R},\cr 
N=\sum_{j\in\Omega}(S_j^{\infty}+I_{1,j}^{\infty}+I_{2,j}^{\infty}) & t\in\mathbb{R}.

\end{cases}
\end{equation}
Moreover, thanks to \eqref{T1-e2} and \eqref{U1-1},
\begin{equation}\label{U1-4}
    r_{1,\min}{\bf 1}\le_1 S^{\infty}(t)\le_1N{\bf 1},\quad {\bf 0}\le_1 I_2^{\infty}(t)\le_1 N{\bf 1}, \quad  \text{and}\quad \frac{\eta_1}{2}{\bf 1}\le_1 I_1^{\infty}(t)\le_1 N{\bf 1}\quad \forall\ t\in\mathbb{R}.
\end{equation}
Now, we claim that 
\begin{equation}\label{U1-5}
    I_2^{\infty}(t)={\bf 0}, \quad \forall\ t\in\mathbb{R}.
\end{equation}
To see this, observe from \eqref{U1-3} and the fact that $\mathcal{L}$ is symmetric that 
\begin{align*}
\frac{d\sum_{j\in\Omega}I_{2,j}^{\infty}}{dt}=&\sum_{j\in\Omega}(S_j^{\infty}-r_{1,\min})\beta_{2,j}I_{2,j}^{\infty}+\sum_{j\in\Omega}(r_{1,\min}-r_{2,j})\beta_{2,j}I_{2,j}^{\infty}\cr 
\le & \|I_2(t)\|_{\infty}\beta_{\max}\sum_{j\in\Omega}(S^{\infty}_j-r_{1,\min})+(r_{1,\min}-r_{2,\min})\beta_{\min}\sum_{j\in\Omega}I_{2,j}^{\infty},\cr 
\le & \Big(\beta_{\max}\sum_{j\in\Omega}(S^{\infty}_j-r_{1,\min})+(r_{1,\min}-r_{2,\min})\beta_{\min}\Big)\sum_{j\in\Omega}I_{2,j}^{\infty}\quad \forall\ t\in\mathbb
{R},
\end{align*}
since $r_{1,\min}<r_{2,\min}$ and $S_{\min}^{\infty}\ge r_{1,\min}$ (see \eqref{U1-4}). Therefore,
\begin{align}\label{U1-6}
    \sum_{j\in\Omega}I_{2,j}^{\infty}(t)\le&  e^{\beta_{\max}\int_{\tau}^{t}(\sum_{j\in\Omega}S^{\infty}(\sigma)-r_{1,\min})d\sigma+(t-\tau)(r_{1,\min}-r_{2,\min}))}\sum_{j\in\Omega}I_{2,j}^{\infty}(\tau)\cr 
    \le &N e^{\beta_{\max}\int_{\tau}^{t}(\sum_{j\in\Omega}S^{\infty}(\sigma)-r_{1,\min})d\sigma+(t-\tau)(r_{1,\min}-r_{2,\min}))}\quad \forall\ t>\tau.
\end{align}
On the other, since 
$$
\frac{d\sum_{j\in\Omega}I_{1,j}^{\infty}}{dt}=\sum_{j\in\Omega}(S^{\infty}_j-r_{1,\min})\beta_{1,j}I_{1,j}^{\infty},
$$
where we have used the fact that $\mathcal{L}$ is symmetric, then 
\begin{align}\label{U1-7}
N\ge \sum_{j\in\Omega}I_{1,j}^{\infty}(t+\tau)=&\sum_{j\in\Omega}I_{1,j}^{\infty}(t+\tau)+\int_{\tau}^{t}\sum_{j\in\Omega}(S_{j}^{\infty}(s)-r_{1,\min})\beta_{1,j}I_{1,j}^{\infty}(s)ds\cr 
\ge & \sum_{j\in\Omega}I_{1,j}^{\infty}(t+\tau)+\frac{\eta_1\beta_{\min}}{2}\int_{\tau}^{t}\sum_{j\in\Omega}(S_{j}^{\infty}(s)-r_{1,\min})ds \quad \forall\ t>\tau.
\end{align}
From \eqref{U1-6} and \eqref{U1-7}, we obtain that 
$$ 
0\le \|I_2^{\infty}(t)\|_{\infty}\le \sum_{j\in\Omega}I_{2,j}^{\infty}(t)\le Ne^{\frac{2N\beta_{\max}}{\eta_1\beta_{\min}}}e^{-(r_{2,\min}-r_{1,\min})(t-\tau)}\quad t>\tau.
$$
Letting $\tau\to-\infty$ in this inequality yields \eqref{U1-5}. Therefore, \eqref{U1-3} becomes 
\begin{equation}\label{U1-8}
\begin{cases}
\frac{dS^{\infty}}{dt}=d_S\mathcal{L}S^{\infty}+(r_{1,\min}{\bf 1}-S^{\infty})\circ\beta_1\circ I_1^{\infty} & t\in\mathbb{R},\cr 
\frac{dI_1^{\infty}}{dt}=d_1\mathcal{L}I_1^{\infty} +(S^{\infty}-r_{1,\min}{\bf 1})\circ\beta_1\circ I_1^{\infty} & t\in\mathbb{R},\cr 
N=\sum_{j\in\Omega}(S_j^{\infty}+I_{1,j}^{\infty}) & t\in\mathbb{R}.

\end{cases}
\end{equation}
Setting $\tilde{S}^{\infty}=S^{\infty}-r_{1,\min}{\bf 1}$, then by $\eqref{U1-4}$, ${\bf 0}\le \tilde{S}^{\infty}(t)$, for every $t\in\mathbb{R}$. Moreover, by \eqref{U1-8} and the fact that $\frac{\eta_1}{2}{\bf 1}\le I_1^{\infty} $, $\tilde{S}^{\infty}$ satisfies
$$
\frac{d\tilde{S}^{\infty}}{dt}=d_S\mathcal{L}\tilde{S}^{\infty} -\frac{\eta_1}{2}\beta_{\min}\tilde{S}^{\infty}\quad t\in\mathbb{R},
$$
which implies 
$$
{\bf 0}\le_1 \tilde{S}^{\infty}(t)\le_1 e^{-\frac{\eta_1}{2}\beta_{\min}(t-\tau)}\tilde{S}^{\infty}(\tau)\le_1 Ne^{-\frac{\eta_1}{2}\beta_{\min}(t-\tau)}{\bf 1}\to {\bf 0} \ \text{as}\ \tau\to-\infty,
$$
for any $t\in\mathbb{R}$. Hence $\tilde{S}^{\infty}={\bf 0}$ for every $t\in\mathbb{R}$, that is $S^{\infty}(t)=r_{1,\min}{\bf 1}$ for every $t\in\mathbb{R}$. As, a result, $S(t_{n_1})\to r_{1,\min}{\bf 1}$ as $n\to\infty$, which contradicts with \eqref{U1-1-1}. Therefore, \eqref{U1-2} holds.  Therefore, for $0<\varepsilon<\frac{1}{2}(r_{2,\min}-r_{1,\min})$, there is $t_{\varepsilon}>0$ such that 
$ S(t)\le_1 (r_{1,\min}+\varepsilon){\bf 1}$ for every $t\ge t_{\varepsilon}$. Hence,

$$
\frac{dI_2}{dt}=d_2\mathcal{L}I_2+(S(t)-r_{2})\circ\beta_2\circ I_2\le_1 d_2\mathcal{L}I_2-(r_{2,\min}-r_{1,\min}-\varepsilon)\beta_{\min}I_2\quad \forall\ t\ge t_{\varepsilon},
$$
which implies that $I_2(t)\to{\bf 0}$ as $t\to\infty$, since $r_{2,\min}-r_{1,\min}-\varepsilon>0$. Finally, by the similar computations yielding \eqref{Z6}, we have that 
\begin{align*}
    \frac{d\widehat{I}_1}{dt}= d_1\mathcal{L}\widehat{I}_1+(\widehat{\beta_1\circ S\circ I_1}-\widehat{\gamma_1\circ I_1}) 
    =d_1\mathcal{L}\widehat{I}_1 +\widehat{Q}  
\end{align*}
where 
$$
Q=\beta_1\circ (S-r_{1,\min}{\bf 1})\circ I_1 +\beta_1\circ r_1\circ I_1-\gamma_1\circ I_1=\beta_1\circ (S-r_{1,\min}{\bf 1})\circ I_1. 
$$
It then follows from the variation of constant formula that
$$
\widehat{I}_1(t)=e^{d_1(t-\tau)\mathcal{L}}\widehat{I}_1(\tau)+\int_{0}^{t-\tau}e^{d_1(t-\tau -\sigma)\mathcal{L}}\widehat{Q}(\tau+\sigma)d\sigma\quad \forall\ t>\tau.
$$
We can now employ \eqref{Z3} and \eqref{Z8}, to derive that 
\begin{align*}
\|\widehat{I}_1(t)\|\le& e^{d_1(t-\tau)\tilde{d}_{*}}\|\widehat{I}_1(\tau)\|+\int_{0}e^{\tilde{d}_{*}d_1(t-\tau-\sigma)}\|\widehat{Q}(\tau+\sigma)\|d\sigma\cr 
\le & 2e^{d_1(t-\tau)\tilde{d}_{*}}\|{I}_1(\tau)\|+2\int_{0}e^{\tilde{d}_{*}d_1(t-\tau-\sigma)}\|{Q}(\tau+\sigma)\|d\sigma \cr 
=& 2e^{d_1(t-\tau)\tilde{d}_{*}}\|{I}_1(\tau)\|+2\int_{0}e^{\tilde{d}_{*}d_1(t-\tau-\sigma)}\|\beta_1\circ (S-r_{1,\min}{\bf 1})\circ I_1 (\tau+\sigma)\|d\sigma\cr 
\le & 2e^{d_1(t-\tau)\tilde{d}_{*}}\|{I}_1(\tau)\|+2\int_{0}e^{\tilde{d}_{*}d_1(t-\tau-\sigma)}\|\beta_1\|_{\infty}\|I_1(\tau+\sigma)\|_{\infty}\| (S-r_{1,\min}{\bf 1})(\tau+\sigma)\|d\sigma\cr 
\le & 2Ne^{\tilde{d}_*d_1(t-\tau)}+2\frac{N\|\beta_1\|_{\infty}}{d_1|\tilde{d}_*|}\sup_{\eta\ge \tau}\|S(\eta)-r_{1,\min}{\bf 1}\|.
\end{align*}
Hence, letting $t\to \infty$ and then $\tau\to\infty$ in the right hand side of this inequality and recalling that \eqref{U1-2}, we obtain that 
$$
\lim_{t\to\infty}\Big\|I_1(t)-\frac{\sum_{j\in\Omega}I_{1,j}}{k}{\bf 1}\Big\|=\lim_{t\to\infty}\|\widehat{I}_1(t)\|=0.
$$
Therefore, 
$$
\lim_{t\to\infty}\Big\|I_1(t)-\Big(\frac{N}{k}-r_{1,\min}\Big){\bf 1}\Big\|=0
$$
since  $\sum_{j\in\Omega}I_{1,j}=N-\sum_{j\in\Omega}S_j-\sum_{j\in\Omega}I_{2,j}\to N-kr_{1,\min}$ as $t\to\infty$. This completes the proof of {\rm (i)}.

\medskip

\noindent{\rm (ii)} If $\mathcal{R}_{0,1}(N)<\mathcal{R}_{0,2}(N)$, we know from Lemma \ref{lemA-3} that ${\bf E}_{1}^*$ is unstable. Now, suppose that $\mathfrak{R}_{1,\max}<\mathfrak{R}_{2,\min}$ (that is $r_{1,\min}>r_{2,\max}$) and we show that strain-1 eventually goes extinct.  Let $(S(t),I_1(t),I_2(t))$ be solution of \eqref{model-eq1} with a positive initial in $\mathcal{E}$. We claim that 
\begin{equation}\label{U2-1}
    \lim_{t\to\infty}\|I_1(t)\|_{\infty}=0.
\end{equation}
Suppose by contradiction that there is a sequence of positive numbers $\{t_n\}_{n\ge 1}$ converging to infinity such that 
\begin{equation*}
    \eta^*:=\inf_{n\ge 1}\|I_1(t_n)\|_{\infty}>0.
\end{equation*}
Hence, by \eqref{DP1}, there is $\tilde{\eta}_*>0$ such that 
\begin{equation}\label{U2-5}
    \inf_{n\ge 1}\min_{j\in\Omega}I_{1,j}(t_n)\ge \tilde{\eta}_*.
\end{equation}
By the Arzela-Ascoli theorem, if possible after passing to a subsequence, there is $(S^{\infty},I_1^{\infty},I_2^{\infty})\in C^1(\mathcal{R}:[\mathbb{R}_+^k]^3)$ such that $(S(t+t_{n}),I_1(t+t_{}),I_2(t+t_{n}))\to (S^{\infty}(t),I_1^{\infty}(t),I_2^{\infty}(t))$ as $n\to\infty$, for $t$ on compact subsets of $\mathbb{R}$. Furthermore, $ (S^{\infty},I_1^{\infty},I_2^{\infty})$ solves \eqref{U1-3}. By Lemma \ref{lemA-2}, there is $\eta_1>0$ such that
\begin{equation}\label{U2-2}
    I_{2,j}^{\infty}(t)\ge \eta_1\quad \forall\ t\in\mathbb{R},
\end{equation}
and by \eqref{T1-e2},
\begin{equation}\label{U2-3}
    S^{\infty}_{j}(t)\le S_{\max}=r_{1,\min}\quad \forall\ t\in\mathbb{R}, \ j\in\Omega.
\end{equation}
Moreover, by \eqref{U2-5}, we have that 
\begin{equation}\label{U2-4}
    I_{1,j}^{\infty}(0)\ge \tilde{\eta}_*\quad \forall\ j\in\Omega.
\end{equation}
From the second equation of \eqref{U1-3} and the fact that $\mathcal{L}$ is symmetric, we get
$$
\frac{d\sum_{j\in\Omega}I_{1,j}^{\infty}}{dt}=\sum_{j\in\Omega}(S^{\infty}_j-r_{1,\min})\beta_{1,j}I_{1,j}^{\infty}\quad \forall\ t\in\mathbb{R}.
$$
Integrating this equation yields
\begin{equation}
    \sum_{j\in\Omega}I^{\infty}_{1,j}(t)=\sum_{j\in\Omega}I_{1,j}^{\infty}(t-\tau)+\int_{t-\tau}^t\sum_{j\in\Omega}(S_j^{\infty}(\sigma)-r_{1,\min})\beta_{1,j}I_{1,j}^{\infty}(\sigma)d\sigma\quad \forall\ t\in\mathbb{R},\ \tau>0.
\end{equation}
Thus, in view of \eqref{U2-3}, we obtain  
\begin{equation}\label{U2-9}
    \int_{-\infty}^0\sum_{j\in\Omega}(r_{1,\min}-S^{\infty}_j(\sigma))\beta_{1,j}I_{1,j}^{\infty}(\sigma)d\sigma\le N
\end{equation}
and 
\begin{equation}\label{U2-10}
   \sum_{j\in\Omega}I^{\infty}_{1,j}(0)\le \sum_{j\in\Omega}I_{1,j}^{\infty}(t) \quad \forall\ t\le 0.
\end{equation}
From \eqref{U2-4} and \eqref{U2-9}, we get 
\begin{equation*}
    k\tilde{\eta}_*\le \sum_{j\in\Omega}I_{1,j}^{\infty}(t) \quad \forall\ t\le 0.
\end{equation*}
As a result, it follows from \eqref{DP1} that 
\begin{equation*}
    k\tilde{\eta}_*\le \sum_{j\in\Omega}I_{1,j}^{\infty}(t)\le k\|I_{1}^{\infty}(t)\|_{\infty}\le kc_*I_{1,\min}^{\infty}(t)\le c_*kI_{1,j}^{\infty}(t) \quad \forall\ t\le 0.
\end{equation*}
We then conclude from \eqref{U2-9}  and the last inequality that 
\begin{equation} \label{U2-12}
\int_{-\infty}^0\sum_{j\in\Omega}(r_{1,\min}-S^{\infty}_j(\sigma))d\sigma\le \frac{c_*}{\tilde{\eta}_*\beta_{\min}} \int_{-\infty}^0\sum_{j\in\Omega}(r_{1,\min}-S^{\infty}_j(\sigma))\beta_{1,j}I_{1,j}^{\infty}(\sigma)d\sigma\le \frac{N}{c_*\tilde{\eta}_*\beta_{\min}}.
\end{equation}
Next, from \eqref{U1-3}, we have that 
\begin{align}
    \frac{d\sum_{j\in\Omega}I_{2,j}^{\infty}}{dt}=& \sum_{j\in\Omega}(S^{\infty}_{j}-r_{1,\min})\beta_{2,j}I_{2,j}^{\infty}+\sum_{j\in\Omega}(r_{1,\min}-r_{2,j})\beta_{2,j}I_{2,j}^{\infty} \cr 
    \ge & \beta_{\max}\Big(\sum_{j\in\Omega}(S^{\infty}_{j}-r_{1,\min})\Big)\sum_{j\in\Omega}I_{2,j}^{\infty}+(r_{1,\min}-r_{2,\max})\sum_{j\in\Omega}\beta_{2,j}I_{2,j}^{\infty}
\end{align}
where we have used \eqref{U2-3}. An integration of the last inequality gives
\begin{equation*}
    \sum_{j\in{\Omega}}I_{2,j}^{\infty}(t)\ge e^{(r_{1,\min}-r_{2,\max})\tau -\beta_{\max}\int_{t-\tau}^t\sum_{j\in\Omega}(r_{1,\min}-S^{\infty}_{j}(\sigma))d\sigma}\sum_{j\in{\Omega}}I_{2,j}^{\infty}(t-\tau)\quad\ t\in\mathbb{R},\ \tau>0.
\end{equation*}
This along with \eqref{U2-2} and \eqref{U2-12} yield
\begin{align*}
    N\ge  \sum_{j\in{\Omega}}I_{2,j}^{\infty}(0)\ge& e^{(r_{1,\min}-r_{2,\max})\tau -\beta_{\max}\int_{-\tau}^0\sum_{j\in\Omega}(r_{1,\min}-S^{\infty}_{j}(\sigma))d\sigma}\sum_{j\in{\Omega}}I_{2,j}^{\infty}(-\tau)\cr 
    \ge &e^{(r_{1,\min}-r_{2,\max})\tau -\beta_{\max}\int_{-\infty}^0\sum_{j\in\Omega}(r_{1,\min}-S^{\infty}_{j}(\sigma))d\sigma}\sum_{j\in{\Omega}}I_{2,j}^{\infty}(-\tau)\cr 
    \ge &\eta_1e^{(r_{1,\min}-r_{2,\max})\tau -\frac{\beta_{\max}N}{c_*\tilde{\eta}_*\beta_{\min}}}\quad \forall\ \tau>0.
\end{align*}
    This is clearly impossible since $r_{1,\min}>r_{2,\max}$. Therefore, \eqref{U2-1} must hold. This together with \eqref{T1-e2} implies that $\liminf_{t\to\infty}\min_{j\in\Omega}I_{2,j}(t)\ge m_*$ for some positive number $m_*$ in dependent of initial data. This completes the proof of the theorem.
\end{proof}

Next, we give a proof of Theorem \ref{T3}.

\begin{proof}[Proof of Theorem \ref{T3}] We suppose that $\mathfrak{R}_{l,j}$ is constant in $j\in\Omega$ for each $l=1,2$. Hence $r_{l,\min}=r_{l,\max}$ for each $l=1,2$. Fix $l\ne p=1,2$. We distinguish three cases.

\noindent{\bf Case 1.} $\mathcal{R}_{0,l}(N)>\mathcal{R}_{0,p}(N)>1$.  In this case, it follows from Theorem \ref{T2} that the strain-l
 EE is globally stable with respect to positive initial data.

 \medskip
 
\noindent{\bf Case 2.} $\mathcal{R}_{0,l}(N)>1\ge \mathcal{R}_{0,p}(N)$. Without loss of generality, we suppose that $l=1$ and $p=2$. Let $(S(t),I_1(t),I_2(t))$ be solution of \eqref{model-eq1} with a positive initial data in $\mathcal{E}$ and define 
\begin{equation}\label{Lyapunov-function}
    \mathcal{L}(t)=\frac{1}{2}\sum_{j\in\Omega}S_j^2(t)+r_{1,\min}\sum_{j\in\Omega}I_{1,j}(t)+ r_{2,\min}\sum_{j\in\Omega}I_{2,j}(t)\quad \forall\ t\ge 0.
\end{equation}
Clearly, $\mathcal{L}>0$ for every $t>0$ and uniformly bounded above by $\frac{N^2}{2}+(r_{1,\max}+r_{2,\max})N$. Moreover, 
\begin{align*}
\frac{d\mathcal{L}}{dt}=&\sum_{j\in\Omega}\Big(d_S\sum_{i\in\Omega}L_{i,j}(S_i-S_j)+\sum_{l=1}^2(r_{l,\min}-S_j)\beta_{l,j}I_{l,j}\Big)S_j   +\sum_{l=1}^2 \sum_{j\in\Omega}r_{l,\min}(S_j-r_{l,\min})\beta_{l,j}I_{l,j}\cr
=&d_S\sum_{j\in\Omega}\sum_{i\in\Omega}(S_i-S_j)S_j+\sum_{l=1}^2\sum_{j\in\Omega}S_j(r_{l,\min}-S_{j})\beta_{l,j}I_{l,j}+\sum_{l=1}^2 \sum_{j\in\Omega}r_{l,\min}(S_j-r_{l,\min})\beta_{l,j}I_{l,j}\cr
=&-\frac{d_S}{2}\sum_{i,j\in\Omega}(S_i-S_j)^2+\sum_{l=1}^2 \sum_{j\in\Omega}(r_{l,\min}-S_j)(S_j-r_{l,\min})\beta_{l,j}I_{l,j}\cr
=&-\frac{d_S}{2}\sum_{i,j\in\Omega}(S_i-S_j)^2-\sum_{l=1}^2 \sum_{j\in\Omega}(S_j-r_{l,\min})^2\beta_{l,j}I_{l,j}\cr 
\le& 0.
\end{align*}
Therefore, $\mathcal{L}$ is a Lyapunov function. It then follows from the LaSalle's invariant principle (\cite[Theorem 4.3.4]{Henry}), the maximal invariant set, $\mathcal{I}$, of $\tilde{\mathcal{E}}:=\{ (S,I_1,I_2)\in\mathcal{E}\ :\ \frac{d\mathcal{L}}{dt}=0\}$ is the global attractor for solutions of \eqref{model-eq1}. Now, observe that $(S,I_1,I_2)\in\tilde{\mathcal{E}} $ if and only if 
\begin{equation}\label{U3-1}
    S_{j}=S_{1}, \quad \text{and}\quad (S_1-r_{l,\min})^2I_{l,j}=0\quad \forall\ l=1,2,\ j\in\Omega.
\end{equation}
As a result since $\mathcal{I}\subset \tilde{\mathcal{E}}$ and is invariant for the semiflow generated by solutions of \eqref{model-eq1} on $\mathcal{E}$, we conclude from \eqref{U3-1} that it is a subset of the spatially homogeneous EE solutions. Thus 
which is equivalent to 
\begin{equation*}
    \mathcal{I}\subset \Big\{\big(\frac{N}{k}{\bf 1},{\bf 0},{\bf 0}\big),\ (r_{1,\min}{\bf 1}, \big(\frac{N}{k}-r_{1,\min}\big){\bf 1},{\bf 0})\Big\}:=\{{\bf E}^0,{\bf E}_1^*\}.
\end{equation*}
Note here that \eqref{model-eq1} doesn't have a strain-2 constant EE solution since $\frac{N}{k}\le r_{2,\min}$. It is clear that $\{{\bf E}^0,{\bf E}^*_1\}\subset \tilde{\mathcal{E}}$ and is invariant for the semiflow generated by solution of \eqref{model-eq1}.  Therefore, $ \mathcal{I}=\{{\bf E}^0,{\bf E}_1^*\}$.  However, since $\mathcal{R}_0(N)=\mathcal{R}_{0,1}(N)>1$, the DFE solution ${\bf E}^0$ is a  repeller for solutions of \eqref{model-eq1} with positive initial data (see Lemma \ref{lem1}). Therefore, ${\bf E}_1^*$ is the global attractor for solution of \eqref{model-eq1}

\medskip

\noindent{\bf Case 3.} $1\ge \max\{\mathcal{R}_{0,1}(N),\mathcal{R}_{0,2}(N)\}$. In this, consider again  the Lyapunov function introduced in \eqref{Lyapunov-function}. This time we have that $\mathcal{I}=\{{\bf E}^0\}$. Hence the DFE attracts all solutions of \eqref{model-eq1} with initial data in $\mathcal{E}$.

\medskip Observe that {\rm (i)} of the theorem follows from case 3, while statement {\rm (ii)} follows from case 1 and case 2. 
\end{proof}

\subsection{Proofs of Theorems \ref{T4}, \ref{T5-1} and \ref{T5}}

\begin{proof}[Proof of Theorem \ref{T4}] Suppose that $d=d_1=d_2=d_S$. Let $(S(t),I_1(t),I_2(t))$ be solution of \eqref{model-eq1} with a positive initial data in $\mathcal{E}$. Then 
\begin{equation*}
\begin{cases}
    \frac{d(S+I_{1}+I_2)}{dt}=d\mathcal{L}(S+I_1+I_2) & t>0,\cr 
    N=\sum_{j\in\Omega}(S_j+I_{1,j}+I_{2,j}) & t\ge 0.
    \end{cases}
\end{equation*}
It then follows from \eqref{Z3} and \eqref{Z4} that 
\begin{equation*}
    \Big\|S(t)+I_1(t)+I_2(t)-\frac{N}{k}{\bf 1}\Big\|\le e^{t\tilde{d}_*}\|S(0)+I_{1}(0)+I_2(0)\|\le N\sqrt{k}e^{t\tilde{d}_*}\quad \forall\ t\ge 0. 
\end{equation*}
Therefore,
\begin{equation}\label{U3-2}
    \Big(\frac{N}{k}-N\sqrt{k}e^{t\tilde{d}_*}){\bf 1}-(I_1(t)+I_2(t)\Big)\le_1 S(t)\le_1 \Big (\frac{N}{k}+N\sqrt{k}e^{t\tilde{d}_*}\Big){\bf 1}-(I_1(t)+I_2(t))\quad \forall\ t\ge 0.
\end{equation}
As a result, 
\begin{equation}\label{U3-3}
    \frac{dI_l}{dt}\le_1 d_l\mathcal{L}I_l+ \Big(\beta_l\circ\Big(\Big(\frac{N}{k}+N\sqrt{k}e^{t\tilde{d}_*}\Big){\bf 1}-I_1-I_2\Big)-\gamma_l)\circ I_l.
\end{equation}
Now, suppose that $\mathcal{R}_{0,l}(N)\le 1$ for some $l=1,2$. This implies that $\lambda_l:=\lambda_*(d_l\mathcal{L}+{\rm diag}(\frac{N}{k}\beta_l-\gamma_l))\le 0$. Let $E_l$ be the positive eigenvector associated with $\lambda_l$ satisfying $\|E_{l}\|_{\infty}=1$. Observe that 

\begin{equation}\label{U3-4}
    \lambda_lE_{l}=d_l\mathcal{L}E_l+(\frac{N}{k}\beta_l-\gamma_l)\circ E_l.
\end{equation}
From \eqref{U3-3} and \eqref{U3-4}, we obtain that 
\begin{equation}
    \frac{d}{dt} E_l\circ I_l\le d_l(E_l\circ\mathcal{L}I_l-I_l\mathcal{L}E_l)- I_l\circ I_l\circ E_l +N\sqrt{k}e^{t\tilde{d}_*}\beta_l\circ E_l\circ I_l\quad \forall\ t>0.
\end{equation}
Since $\mathcal{L}$ is symmetric, it then follows that 
\begin{align}
    \frac{d}{dt}\sum_{j\in\Omega}E_{l,j}I_{l,j}\le & -\sum_{j\in\Omega}E_{l,j}I_{l,j}^2+N\sqrt{k}e^{t\tilde{d}_*}\beta_{\max}\sum_{j\in\Omega}E_{l,j}I_{l,j}\cr 
    \le & -\sum_{j\in\Omega}(E_{l,j}I_{l,j})^2+N\beta_{\max}\sqrt{k}e^{t\tilde{d}_*}\sum_{j\in\Omega}E_{l,j}I_{l,j} \quad (\text{since}\ \|E_{l}\|_{\infty}=1)\cr 
    \le & -\frac{1}{k}\Big(\sum_{j\in\Omega}E_{l,j}I_{l,j}\Big)^2++N\beta_{\max}\sqrt{k}e^{t\tilde{d}_*}\sum_{j\in\Omega}E_{l,j}I_{l,j} \quad (\text{by the Cauchy-Schwarz inequality})\cr 
    =& \Big(N\beta_{\max}\sqrt{k}e^{t\tilde{d}_*}-\frac{1}{k}\sum_{j\in\Omega}E_{l,j}I_{l,j}\Big)\sum_{j\in\Omega}E_{l,j}I_{l,j}.
\end{align}
Therefore, since $ \tilde{d}_*<0$, and hence $e^{t\tilde{d}_*}\to0$ as $t\to\infty$, we then conclude that $\sum_{j\in\Omega}E_{l,j}I_{l,j}(t)\to 0$ as $t\to\infty$. Consequently, $\|I_l(t)\|_{\infty}\to 0$ as $t\to\infty$ since $E_{l}$ is strictly positive. 

\quad {\rm (i)}  Now, if $\mathcal{R}_{0}(N)\le 1$, it follows from the above that $\|I_l(t)\|\to 0$ as $t\to\infty$, for every $l=1,2$. This together with \eqref{Z9}, yields that $\|S(t)-\frac{N}{k}{\bf 1}\|\to 0$ as $t\to\infty$. This shows that the DFE is globally stable if $\mathcal{R}_0(N)\le 1$. So, {\rm (i)} is proved.

\quad {\rm (ii)} Suppose without lost of generality that $\mathcal{R}_{0,1}(N)>1\ge \mathcal{R}_{0,2}(N)$. We know from the above that $\|I_2(t)\|\to 0$ as $t\to\infty$, which implies that $I_2(t)\pm N\sqrt{k}e^{t\tilde{d}_*}\to 0$ as $t\to\infty$. Recalling that $I_1(t)$ satisfies \eqref{U3-2} and \eqref{U3-3}. and 
\begin{align*}
 & d_1\mathcal{L}I_1+ \Big(\beta_1\circ\Big(\Big(\frac{N}{k}-N\sqrt{k}e^{t\tilde{d}_*}\Big){\bf 1}-I_1-I_2\Big)-\gamma_1)\circ I_1 \cr 
 \le_1 & \frac{dI_l}{dt} \le_1  d_1\mathcal{L}I_1+ \Big(\beta_1\circ\Big(\Big(\frac{N}{k}+N\sqrt{k}e^{t\tilde{d}_*}\Big){\bf 1}-I_1\Big)-\gamma_1)\circ I_1 \quad t>0,
\end{align*}
we can employ a perturbation argument to conclude that $I_1(t)\to I_1^*$ as $t\to\infty$, where $I^*$ is the unique positive solution of the multiple-patch logistic system
$$
0=d_1\mathcal{L}I_1^*+\Big(\beta_1\circ\Big(\frac{N}{k}{\bf 1}-I_1^*\Big)-\gamma_1\Big)I_1^*.
$$
The existence of $I_1^*$ follows from standard results on the logistic equations and the fact that $\lambda_*(d_1\mathcal{L}+{\rm diag}(\frac{N}{k}\beta_1-\gamma_1))>0$. Note that the last inequality holds since $\mathcal{R}_{0,1}(N)=\frac{N}{k}\rho(\mathcal{F}_1\mathcal{V}^{-1})>1$. We then conclude from \eqref{U3-2} that $(S(t),I_1(t),I_2(t))\to {\bf E}_1^*:=(\frac{N}{k}{\bf 1}-I_1^*,I_1^*,0)$ as $t\to\infty$. This completes the proof of {\rm (ii)}. 
\end{proof}

  \begin{proof}[Proof of Theorem \ref{T5-1}]Suppose that $k=2$, $\min\{\mathcal{R}_{0,1}(N),\mathcal{R}_{0,2}(N)\}>1$. Let $\tilde{\mathcal{R}}_1(N)$ and $\tilde{\mathcal{R}}_2(N)$ be defined by \eqref{GG2}.

  \quad {\rm (i)} Suppose that $\tilde{\mathcal{R}}_{1}(N)>1$. Let $(S(0),I_1(0),I_2(0))\in \mathcal{E}$ such that $\|I_1(0)\|_{\infty}>0$.
Take ${\bf E}^*:=(S^*,{\bf 0},I_2^*)\in \mathcal{E}_2^*$. Let ${\bf P}^*$ be an eigenvector, with positive entries, associated with  $\tilde{\mathcal{R}}_1({\bf E}_2^*)$, that is ${\bf E}_2^*$ satisfies 
  \begin{equation*}
      {\rm diag}((\beta_1\circ S^*_2)\mathcal{V}_1^{-1}){\bf P}^*=\tilde{\mathcal{R}}_1({\bf E}_2^*){\bf P^*}
  \end{equation*}
  which is equivalent to 
  $$
  0=d_1\mathcal{L}{\bf P}^*+\Big(\frac{1}{\tilde{\mathcal{R}}_1({\bf E}_2^*)}\beta_1\circ S_2^*-\gamma_1\Big)\circ{\bf P}^*
  $$
  Hence, since $\mathcal{L}$ is symmetric,
  \begin{align*}
      \frac{d}{dt}\sum_{j\in \Omega}P_j^*I_{1,j}(t)=& \sum_{j\in\Omega}\beta_{1,j}(S_j(t)-\frac{1}{\tilde{\mathcal{R}}_1({\bf E}_2^*)}S_{2,j}^*)P_{j}^*I_{1,j}(t)\cr 
      =&  \sum_{j\in\Omega}\beta_{1,j}(S_j(t)-S_{2,j}^*)P_{j}^*I_{1,j}(t)+ \frac{(\tilde{\mathcal{R}}_1({\bf E}_2^*)-1)}{\tilde{\mathcal{R}}_1({\bf E}_2^*)}\sum_{j\in\Omega}\beta_{1,j}S_{2,j}^*P_{j}^*I_{1,j}(t)\cr 
      \ge & \beta_{\max}\Big(\frac{\beta_{\min}}{\beta_{\max}}\frac{(\tilde{\mathcal{R}}_1(N)-1)}{\tilde{\mathcal{R}}_1(N)}-{\rm dist}((S(t),I_1(t),I_2(t)),\mathcal{E}_2^*)\Big)\sum_{j\in \Omega}P_j^*I_{1,j}(t)\quad t>0.
  \end{align*}
  So, an integration of this yields that 
  $$
  \sum_{j\in \Omega}P_j^*I_{1,j}(t)\ge e^{\int_0^t\beta_{\max}\Big(\frac{\beta_{\min}}{\beta_{\max}}\frac{(\tilde{\mathcal{R}}_1(N)-1)}{\tilde{\mathcal{R}}_1(N)}-{\rm dist}((S(\tau),I_1(\tau),I_2(\tau)),\mathcal{E}_2^*)\Big)d\tau}\sum_{j\in \Omega}P_j^*I_{1,j}(0)\quad t>0,
  $$
  from which it follows that 
  $$
  \frac{\ln\Big(\sum_{j\in\Omega}P^*_jI_{1,j}(t) \Big)}{t\beta_{\max}}\ge\frac{\beta_{\min}(\tilde{\mathcal{R}}_1(N)-1)}{\beta_{\max}\tilde{\mathcal{R}}_1(N)}-\frac{1}{t}\int_{0}^{t}{\rm dist}((S(\tau),I_1(\tau),I_2(\tau)),\mathcal{E}_2^*)d\tau +\frac{\ln\Big(\sum_{j\in\Omega}P_j^*I_{1,j}(0)\Big)}{\beta_{\max}t}.
  $$
  Since, $\frac{\ln(kN\|{\bf P}^*\|)}{\beta_{\max}t}\ge \frac{\ln\Big(\sum_{j\in\Omega}P^*_jI_{1,j}(t,j) \Big)}{t\beta_{\max}}$ for all $t>0$, letting $t\to\infty$ in the last inequality  implies that \eqref{2-GG1} holds. 

  Next, suppose in addition that $\mathcal{E}_2^*\cup\{{\bf E}^0\}$ is the global attractor for classical solutions of \eqref{model-eq1} with initial data $(S(0),I_1(0),I_2(0))\in\mathcal{E}$ satisfying $\|I_1(0)\|=0$. We appeal to the persistence theory \cite{ST2011} to prove \eqref{2-GG3}. To this end, define 
  \begin{equation}
      \mathcal{\xi}(S,I_1,I_2)=\|I_1\|_{\infty},\quad \quad (S,I_1,I_2)\in \mathcal{E}. 
  \end{equation}
  Clearly, $\mathcal{\xi}$ is continuous. Moreover, by the uniqueness of solution of \eqref{model-eq1}, we see that for any given initial data $(S(0),I_1(0),I_2(0))\in \mathcal{E}$,  $\mathcal{\xi}((S(t_0),I_1(t_0),I_2(t_0)))=0$ for some $t_0\ge 0$ if and only if $\mathcal{\xi}((S(t),I_1(t),I_2(t)))
  =0$ for all $t\ge 0.$  Since $\min\{\mathcal{R}_{0,1},\mathcal{R}_{0,2}\}>1$, by Lemma \ref{lem1}, we know that there is $\sigma_0>0$ such that 
  \begin{equation}\label{ZZX1}
      \limsup_{t\to\infty}\|(S(t),I_1(t),I_2(t))-{\bf E}^0\|\ge \sigma_0\quad \text{whenever}\quad \|I_1(0)\|+\|I_2(0)\|>0.
  \end{equation} 
   Note also from \eqref{2-GG1} there there is $\tilde{\sigma}_0>0$ such that 
  \begin{equation*}
      \limsup_{t\to\infty}{\rm dist}((S(t),I_1(t),I_2(t)),\mathcal{E}_2^*)\ge \tilde{\sigma}_0\quad \text{whenever}\quad \mathcal{\xi}((S(0),I_1(0),I_2(0)))>0.
  \end{equation*}
  Therefore,
 \begin{equation}\label{IST3}
      \limsup_{t\to\infty}{\rm dist}((S(t),I_1(t),I_2(t)),\mathcal{E}_2^*\cup\{{\bf E}^0\})\ge \min\{\sigma_0,\tilde{\sigma}_0\}\quad \text{whenever}\quad \mathcal{\xi}((S(0),I_1(0),I_2(0)))>0.
  \end{equation}

\noindent  Now, we claim that there is a positive $\sigma_1>0$ such that 
  \begin{equation}\label{last-claim-2}
      \limsup_{t\to\infty}\mathcal{\xi}(S(t),I_1(t),I_2(t))\ge \sigma_1\quad \text{whenever}\ (S(0),I_1(0),I_2(0))\in\mathcal{E}, \ \mathcal{\xi}(S(0),I_1(0),I_2(0))>0.
  \end{equation}
  We proceed by contradiction. So, suppose that there a sequence $\{(S^n(0),I_1^n(0),I_2^n(0))\}_{n\ge 1}$ of initial data in $\mathcal{E}$ satisfying $\mathcal{\xi}(S^n(0),I_1^n(0),I_2^n(0))>0$ and 
  \begin{equation}\label{IST1}
      \sup_{t\ge 0}\|I_1^n(t)\|_{\infty}=\sup_{t\ge 0}\mathcal{\xi}(S^n(t),I_1^n(t),I_2^n(t))\le \frac{1}{n}\quad \forall\ n\ge 1.
  \end{equation}
 By \eqref{T1-e1}, there is $\sigma_2>0$, such that, for each $n\ge 1$, there is $t_n\gg 1$ such that 
\begin{equation}\label{IST2}
    \min_{j\in\Omega}\sum_{l=1}^2I_{l,j}^n(t_n+t)\ge \sigma_2\quad \forall\ t\ge 0.
\end{equation}
It then follows from \eqref{IST1} that there is $n_0>1$
\begin{equation}\label{IST9}
    \min_{j\in\Omega}I_{2,j}^n(t_n+t)\ge \frac{\sigma_2}{2},\quad \forall\ t\ge 0, n\ge n_0.
\end{equation}
However, thanks to \eqref{IST3},  for each $n\ge 1$, there is $\tilde{t}_n>t_n$, $\tilde{t}_n-t_{n}\to \infty$, as $n\to\infty$, such that 
\begin{equation}\label{IST10}
    {\rm dist}((S^n(\tilde{t}_n),I_1^n(\tilde{t}_n),I_2(\tilde{t}_n)),\mathcal{E}_2^*\cup\{{\bf E}^0\})\ge\frac{1}{2}\min\{\tilde{\sigma}_0,\sigma_0\}. 
\end{equation}
Finally, consider the sequence of solutions $(S^n(t+\tilde{t}_n),I^n(t+\tilde{t}_n),I_2^n(t+\tilde{t}_n)))$ of \eqref{model-eq1}. By the Arzela-Ascoli theorem, possibly after passing to a subsequence, there is a nonnegative solution $(S^*(t),I^*_1(t),I_2^*(t))$ of \eqref{model-eq1}, defined for all $t\in\mathbb{R}$, such that $ (S^n(t+\tilde{t}_n),I^n(t+\tilde{t}_n,I_2^n(t+\tilde{t}_n)))\to(S^*(t),I^*_1(t),I_2^*(t)) $ as $n\to\infty$, locally uniformly on $\mathbb{R}$. Clearly, by \eqref{IST1}, $I_1^*(t)=0$ for all $t\in\mathbb{R}$. Moreover, we claim that 
\begin{equation}\label{IST21}
    (S^*(t),{\bf 0},I_2^*(t))\in \mathcal{E}_2^*\quad \forall\ t\in\mathbb{R}.
\end{equation}
  To see this, note from \eqref{IST9} and the fact that $\tilde{t}_n-t_n\to\infty$ as $n\to\infty$ that $I_2^*(t)\ge \frac{\sigma_2}{2}$ for all $t\in\mathbb{R}$. This also implies that $\|(S^*(t),0,I_2^*(t))-{\bf E}^0\|\ge \frac{\sigma_2}{2}$ for all $t\in\mathbb{R}$. Therefore, since $\mathcal{E}_2^*\cup\{{\bf E}^0\}$ is the global attractor for solutions of \eqref{model-eq1}, when restricted to initial data $I_1(0)=0$, and \eqref{ZZX1} holds, it follows from \cite[Theorem 5.7]{ST2011}  that \eqref{IST21} holds. As a result, we obtain that  ${\rm dist}((S^*(0),0,I_2^*(0)),\mathcal{E}_2^*\cap\{{\bf E}^0\})=0$, which contradicts with \eqref{IST10}. Therefore, claim \eqref{last-claim-2} holds. We can now invoke  persistence theory \cite[Theorem 5.2]{ST2011}, we deduce that there is $\sigma_{1,*}>0$ such that 
   \begin{equation*}
      \liminf_{t\to\infty}\mathcal{\xi}(S(t),I_1(t),I_2(t))\ge \sigma_1\quad \text{whenever}\ (S(0),I_1(0),I_2(0))\in\mathcal{E}, \ \mathcal{\xi}(S(0),I_1(0),I_2(0))>0.
  \end{equation*}
  Therefore, by the similar arguments as those in the proof of step 2 of Theorem \ref{T1}-{\rm (ii-3)} derive that \eqref{2-GG3} holds.

  \medskip

  {\rm (ii)} Suppose that desired hypotheses hold. Define the mapping $\tilde{\xi}\ :\ \mathcal{E}\to[0,\infty)$ by 
  $$
  \tilde{\xi}(S,I_1,I_2)=\min_{j\in\Omega}\min_{l=1,2}I_{l,j}.
  $$
  Then $\tilde{\xi}$ is continuous and concave. Note that $\tilde{\xi}(S(t),I_1(t),I_2(t))>0$ for all $t>0$ if $\tilde{\xi}(S(0),I_1(0),I_2(0))>0$.  Furthermore, by \eqref{ZZX1}, there is $\nu_*>0$ such that 
  $$
  \liminf_{t\to\infty}\tilde{\xi}(S(t),I_1(t),I_2(t))\ge\nu_*\quad \text{whenever}\quad \tilde{\xi}(S(0),I_1(0),I_2(0))>0.
  $$
  Therefore, since the set $\mathcal{E}$ is convex and compact, and the semiflow induced by the solutions of \eqref{model-eq1} is also compact and continuous, we can invoke \cite[Theorem 6.2]{ST2011} to conclude that system \eqref{model-eq1} has at least one coexistence EE solution.
  \end{proof}

We conclude this section with a proof of Theorem \ref{T5}.

\begin{proof}[Proof of Theorem \ref{T5}] Suppose that {\bf (A2)} holds and $\min\big\{\mathcal{R}_{0,1}(N),\mathcal{R}_{0,2}(N)\big\}>1$. Let ${\bf E}_1^*=(\frac{N}{k}{\bf 1}-I^*_1,I_1^*,{\bf 0})$ and ${\bf E}_2^*=(\frac{N}{k}{\bf 1}-I_2^*,{\bf 0},I_2^*)$ be the single-strain EE solutions of \eqref{model-eq1}.  It follows from \cite[Theorem 3]{LP2023} that the single strain EE solution are globally stable with respect to positive initial on each of the set $\mathcal{E}_1$ and $\mathcal{E}_2$. Hence, the assertions {\rm (i)} and {\rm (ii)} follows from Theorem \ref{T5-1}.

\quad {\rm (iii)} By \cite[Theorem 2.7]{CSSY2020}, $\mathcal{R}_{0,l}(N)\to \mathfrak{R}_{l,\max}(N)$ as $d\to0$, for each $i=1,2$. Hence, since for each $j=1,2$, $H^+_{l}\ne\emptyset$, there is $d_{*,1}>0$ such that $\min\{\mathcal{R}_{0,1}(N),\mathcal{R}_{0,2}(N)\}>1$ for every $0<d<d_{*,1}$. This ensures that the unique single-strain EE solutions, ${\bf E}_1^*=(\frac{N}{k}{\bf 1}-I^*_1,I_1^*,{\bf 0})$ and ${\bf E}_2^*=(\frac{N}{k}{\bf 1}-I_2^*,{\bf 0},I_2^*)$, exist whenever $0<d_{*,1}$. Moreover, by \cite[Theorem 9-{\rm b}]{LP2023}, it holds that 
$$
I_l^*\to \Big(\frac{k}{N}{\bf 1}-r_{l}\Big)_+\quad \text{as}\ d\to 0,\quad l=1,2.
$$
 Therefore, by \cite[Theorem 2.7]{CSSY2020}, it holds that
\begin{equation}\label{HJH}
    \tilde{\mathcal{R}}_l(N)=\rho\Big(\Big(\frac{N}{k}\mathcal{F}_l-{\rm diag}(\beta_l\circ I_p^*)\Big)\mathcal{V}_l^{-1}\Big)\to\tilde{\mathcal{R}}_l^*(N):=\max_{j\in\Omega}\mathfrak{R}_{l,j}\min\Big\{\frac{N}{k},r_{p,j}\Big\} \quad \text{as}\quad  d\to 0, \quad p\ne l\in\{1,2\}.
\end{equation}
However, for each $l\ne p\in\{1,2\}$ and    $j\in\Sigma_{l}\cap H^{+}_l\ne\emptyset$ , it holds that $\frac{N}{k}\mathfrak{R}_{l,j}>1$ and $\mathfrak{R}_{l,j}r_{p,j}>1$, that is, $\mathfrak{R}_{l,j}\min\Big\{\frac{N}{k},r_{p,l}\Big\}>1$. Therefore, $\min\Big\{\tilde{\mathcal{R}}_{1}^*(N),\tilde{\mathcal{R}}_2^*(N) \Big\}>1$. Hence, by \eqref{HJH}, there is $0<d_*<d_{*,1}$ such that $\min\{\tilde{\mathcal{R}}_1(N),\tilde{\mathcal{R}}_2(N)\}>1$ for every $0<d<d_*$.
\end{proof}

\end{document}